\newtheorem{lemma}{Lemma}[section]
\newtheorem{definition}{Definition}[section]
\newtheorem{proposition}{Proposition}[section]
\newtheorem{remark}{Remark}[section]
\newtheorem{theorem}{Theorem}[section]
\newtheorem{example}{Example}[section]
\newcommand{\I}{\mathbb{I}}
\title[On-off intermittency and chaotic walks]{On-off intermittency and chaotic walks}
 \author{Ale Jan Homburg}
 \address{KdV Institute for Mathematics, University of Amsterdam, Science park 107, 1098 XG Amsterdam, Netherlands}
 \address{Department of Mathematics, VU University Amsterdam, De Boelelaan 1081, 1081 HV Amsterdam, Netherlands}
 \email{a.j.homburg@uva.nl}
 \author{Vahatra Rabodonandrianandraina}
 \address{KdV Institute for Mathematics, University of Amsterdam, Science park 107, 1098 XG Amsterdam, Netherlands}
 \email{v.f.rabodonandrianandraina@uva.nl}
\begin{document}

\begin{abstract}
We consider a class of skew product maps of interval diffeomorphisms over the doubling map. The interval maps fix the end points of the interval.
It is assumed that the system has zero fiber Lyapunov exponent at one endpoint and zero or positive fiber Lyapunov exponent at the other endpoint. 
We  prove the appearance of on-off intermittency.
This is done using the equivalent description of 
chaotic walks: random walks driven by the doubling map. The analysis further relies on approximating the chaotic walks by Markov random walks, 
that are constructed using Markov partitions for the doubling map.
\end{abstract}

\maketitle

\section{Introduction}

The setting of this paper is
of skew product systems of interval maps over 
linearly expanding interval maps
\begin{align*}
\hat{G}(y,x) &=  (E_m (y),   \hat{g}_y (x))
\end{align*}
on $\I \times [0,1]$.
Here $E_m : \I \to \I$, $m$ an integer bigger or equal than $2$, is the expanding map
\begin{align*}
E_m (y) &= my - \lfloor my \rfloor, 
\end{align*}
and we use notation $\I = [0,1]$ for the base space on which $E_m$ acts.
For each $y \in \I$, $\hat{g}_y$ is a strictly monotone interval map fixing the endpoints $\hat{g}_y(0)=0$ and $\hat{g}_y(1) =1$.
It is assumed to be smooth jointly in $(y,x)$.
We will in particular consider the doubling map $E_2$ and we will limit to this now and return to stronger expanding maps later. 



Since each $\hat{g}_y$ fixes the endpoints of the interval $[0,1]$,
we can conjugate the map $\hat{G}$ on $\I \times (0,1)$ to a map $G$ on $\I \times \mathbb{R}$, using the homeomorphism
$h: \mathbb{R} \to (0,1)$,
\begin{align*}
h(x) &= \frac{e^x}{1 + e^x}.
\end{align*}
That is,
\begin{align*}
G(y,x) &= (E_2(y) , g_y (x)) =  (E_2(y),  h^{-1} \circ \hat{g}_y \circ h  (x)).
\end{align*}
Observe that $h^{-1}(x) = \ln (x/1-x)$. A small calculation shows that near $-\infty$ we can write
\begin{align*}
 g_y (x) &= x + \ln (\hat{g}_y'(0)) + R(y,x) 
\end{align*}
with
\begin{align*}
|R(y,x)| &\le C e^{-|x|}
\end{align*}
for some constant $C>0$. A similar expansion applies near $+\infty$.

\begin{figure}[phtb]

\begin{center}
\includegraphics[width=12cm]{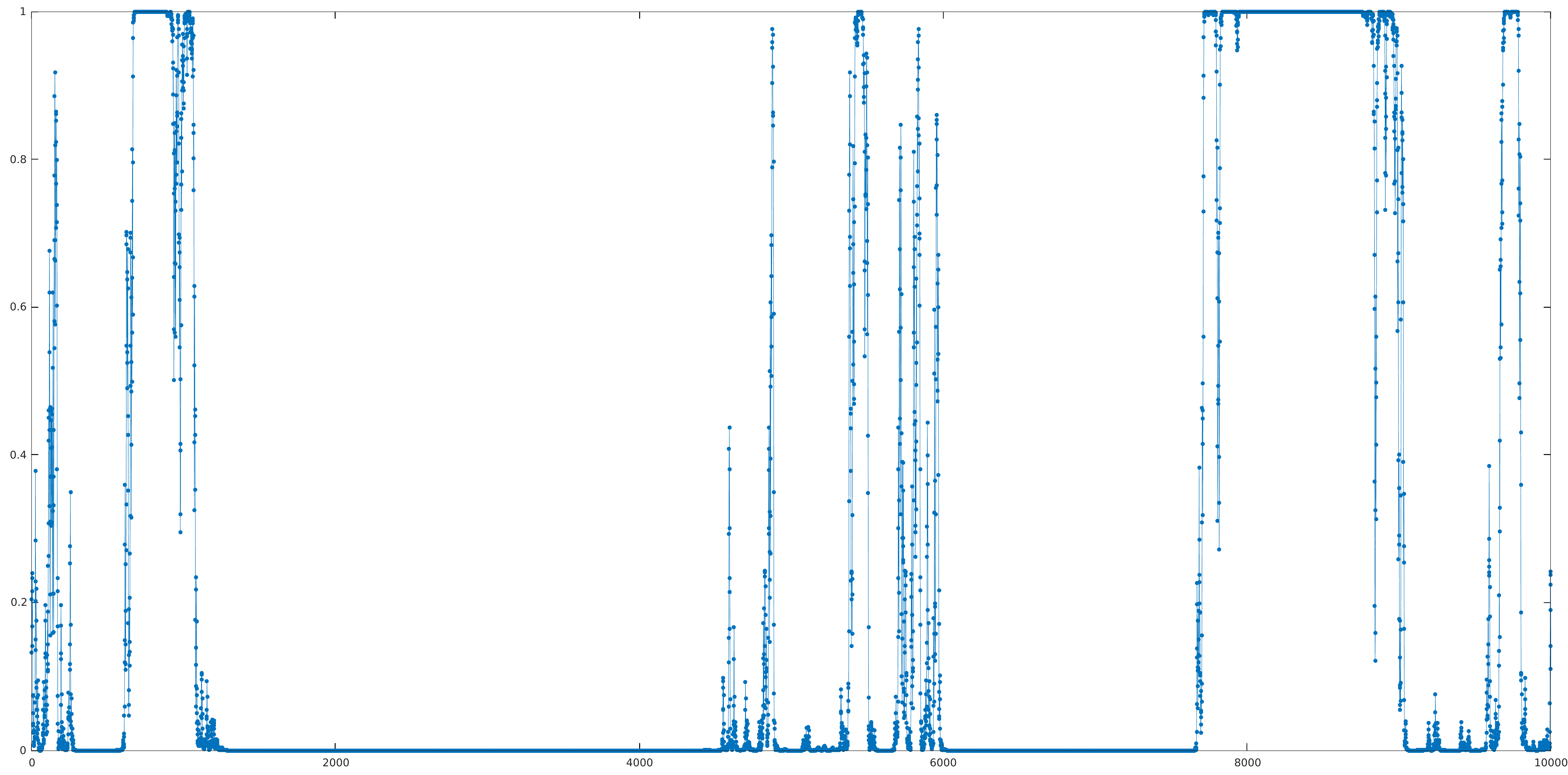} 
\caption{Time series of the $x$-coordinate of 
$
(y,x) \mapsto \left(3 y \mod 1 , \frac{x e^{-1+2y}}{  1 + x(e^{-1+2y}-1)}\right)
$ 
on $\I \times [0,1]$. The multiplication by $3$, instead of $2$, is for computational convenience.
The restriction of this map to $\I \times (0,1)$ is topologically conjugate to the group extension
$
(y,x) \mapsto (3y \mod 1 , x -1 + 2y)
$ 
on $\I \times \mathbb{R}$. For almost all initial values $y$, the distribution of the steps is uniform on $[-1,1]$ and has average zero.
\label{f:-1+2y}}
\end{center}

\end{figure}
Particular examples are given by translations on the real line driven by the doubling map,
\begin{align}\label{e:walk}
(y,x) \mapsto (E_2 (y) , x + \xi (y)).
\end{align}
Iterates of the fiber coordinate yield the cocycle 
\[
x \mapsto x + \sum_{i=0}^{n-1} \xi (E_2^i(y)).
\]
By \cite{atk76}  
 this cocycle is recurrent precisely if 
 \begin{align}\label{e:xiaverage0}
 \int_{\I} \xi (y) \, dy = 0.
 \end{align}
See \cite{boygor97} for the central limit theorem for cocycles over intervals maps such as the doubling map.
References \citep{nit00,nitpol05,gui89,fiemeltor05} contain further results on ergodicity and stable transitivity for similar cocycles.

Conjugating \eqref{e:walk} back to $\I \times (0,1)$ results in the skew product system
\begin{align}\label{e:hatG}
(y,x) \mapsto \left(E_2 (y), \frac{e^{\xi(y)} x}{1+(e^{\xi(y)} - 1)x}\right).
\end{align}
Figure~\ref{f:-1+2y} shows a time series for the $x$-component of  \eqref{e:hatG} for the choice $\xi(y) = -1+2y$
(for which \eqref{e:xiaverage0} applies).
Noticeable are the long durations of the time series near $x=0$ or $x=1$ and the bursts away.
From this perspective such skew product systems are considered in \cite[Section~6]{bonmil08}, but for statements on dynamics that paper replaces the action of $E_2$ 
by i.i.d. noise from a uniform distribution on $\I$. 

A broader framework for which the study of systems such as \eqref{e:hatG} is relevant is that of on-off intermittency \cite{plaspitre93}. 
On-off intermittency is associated to invariant manifolds with inside an attractor that is neutral or weakly repelling in transverse directions.
This can generate dynamics showing an aperiodic switching between laminar dynamics near the attractor and bursts away.
In \eqref{e:hatG}, $\I \times \{0\}$ and $\I\times \{1\}$  play the role of the invariant manifolds with chaotic dynamics inside and a neutral transverse direction. 
A different type of system, 
but also with vanishing transverse Lyapunov exponents, is considered in \cite{gou07}; that paper considers 
Pomeau-Manneville maps with a parameter that is driven by the doubling map.
In bifurcation studies the relevant transition is called a blowout bifurcation, where a transverse Lyapunov exponent passes through zero \citep{ottsom94,ashastnic98}.

Quantitative characteristics of on-off intermittency are considered in \cite{heaplaham94}.  Referring to systems such as \eqref{e:walk} as chaotic walks, the authors comment
\vspace{0.5cm}
\begin{mdframed}[innerleftmargin=15,innerrightmargin=15,hidealllines=true]
{\sloppy
Is there a setting in which to understand both the random 
and the chaotic driving cases?
We suggest that one approach to answering this question is through the study of "chaotic walks", i.e., 
additive walks where the increments are chosen from some chaotic process. [...]
We know of no systematic studies of chaotic
walks, though they are clearly an important counterpart
to the comparatively well studied random walks.}
%
\end{mdframed}
\vspace{0.5cm}
Random walks in $\mathbb{Z}$ driven by expanding Markov maps are considered in \cite{morsma14}, where they are called deterministic random walks.
Other authors have considered deterministic random walks driven by irrational circle rotations instead of chaotic maps such as the doubling map, 
see in particular \citep{aarkea82,avidoldursar15}.
In \cite{ghahom17} the reader can find a study of iterated function systems of interval diffeomorphisms with a neutral fixed point. 
This amounts to a study of 
chaotic walks on the line:  for instance the symmetric random walk  can be cast  as a skew product setting of the form
\[
 (y,x) \mapsto \left(E_2 (y) , x  + \mathrm{sign}\, \left(y-\frac{1}{2}\right)\right). 
\]
See Appendix~\ref{a:chaoticwalk} for comments on the relation and difference between chaotic walks and random walks.
In the same vain as \cite{ghahom17}, \citep{athdai00,athsch03} analyze iterated function systems of logistic maps with vanishing fiber Lyapunov exponent at zero. 

\subsection{The class of skew product systems}

We go beyond group extensions as in \eqref{e:walk} or \eqref{e:hatG}, and will consider small smooth perturbations of the form 
\[
\hat{G}(y,x) = (E_2(y) , \hat{g}_y (x)) = \left(E_2 (y) , \frac{e^{\xi(y)} x}{1+(e^{\xi(y)} - 1)x} + r(y,x) \right),
\]
where
$r (y,x) = \mathcal{O} (x^2)$, $x\to 0$. 
 

\begin{definition}\label{d:hatSone}
The set $\hat{\mathcal{S}}$ consists of smooth skew product systems $\hat{G}: \I \times [0,1] \to  \I \times [0,1]$,
\[
\hat{G} (y,x) = ( E_2 (y) , \hat{g}_y(x)).
\]
Here  $\hat{g}_y$ are strictly increasing functions,  
\[
 \hat{g}_y (x) = \frac{e^{\xi(y)} x}{1+(e^{\xi(y)} - 1)x} + r (y,x)
\]
with, for some $C>0, r_0>0$, 
\begin{enumerate}
  \item   $\int_{\I}  \xi(y)  dy =0$;
 \item $|r(y,x)| \le C x^2$,  $|r(y,x)| \le C |1-x|$;
 \item $|r(y,x)|, |\frac{\partial}{\partial x} r(y,x)|, |\frac{\partial}{\partial y} r(y,x)| \le r_0$. 
\end{enumerate}
\end{definition}

Note that $\hat{\mathcal{S}}$ depends on $\xi$ and $C,r_0$.  
We will occasionally write $\hat{\mathcal{S}}_{r_0}$ to indicate dependence on $r_0$, suppressing dependence on $C,\xi$. 
We write $\hat{G}^n (y,x) = (E_2^n(y) , \hat{g}_y^n (x))$, so that
\[
\hat{g}^n_y(x) = \hat{g}_{E_2^{n-1} (y)} \circ \cdots \circ \hat{g}_y (x).
\]

\begin{remark}
Equation~\eqref{e:xiaverage0}, noting that in addition $r(y,x) = \mathcal{O}(x^2)$ for $x\to 0$,  
expresses a vanishing fiber Lyapunov exponent at $0$:
\[
 L_0 := \int_{\I} \ln \hat{g}'_y (0) \, dy =  \int_{\I}  \xi(y) \, dy =  0.
\] 
The fiber Lyapunov exponent at $1$,
\begin{align}\label{e:hatRlyap}
 L_1 := \int_{\I}   \ln \hat{g}'_y(1)  \, dy
\end{align}  
need not vanish for maps $\hat{G} \in \hat{\mathcal{S}}$.
%
Clearly $L_1$ is small if $r_0$ is small.
 \end{remark}
  
In the setting of skew product systems on $\I \times \mathbb{R}$, Definition~\ref{d:hatSone} leads to the class of systems
\begin{align*}
 \mathcal{S} &=  \mathcal{S}_{r_0} = \{ G: \I \times \mathbb{R} \to \I \times \mathbb{R} \; ; \; G = (\mathrm{id}\times h^{-1}) \circ \hat{G} \circ (\mathrm{id} \times h), \hat{G} \in \hat{\mathcal{S}} \}. 
\end{align*}
The set $\mathcal{S}$ consists of smooth skew product systems $G: \I \times \mathbb{R} \to  \I \times \mathbb{R}$,
\begin{align}\label{e:GfromS}
G (y,x) = ( E_2 (y) , g_y(x)).
\end{align}
For these systems,  $g_y$ are strictly increasing functions of the form
\begin{align}\label{e:gy}
g_y(x) &= x + \xi (y) + R(y,x)
\end{align}
with
\begin{align*}
|R(y,x)|, |DR(y,x)| &\le C r_0,
\\
|R(y,x)| &\le C e^{x},
\end{align*} 
for some $C>0$.
Note that the perturbation $R(y,x)$ is exponentially flat at $- \infty$.  
One can view such a map as giving a nonhomogeneous chaotic walk.

\subsection{Monotone displacement functions} 

Here we treat displacement functions $\xi: \I \to \mathbb{R}$ that are strictly monotone functions satisfying \eqref{e:xiaverage0}.
Below we consider general smooth displacement functions.

Figure~\ref{f:-1+2ypert} shows a time series of a perturbation of the map considered
in Figure~\ref{f:-1+2y}, where the perturbation is chosen to keep a zero fiber Lyapunov exponent at $0$ and to 
get a positive fiber Lyapunov exponent at $1$.
\begin{figure}[phtb]

\begin{center}
\includegraphics[width=12cm]{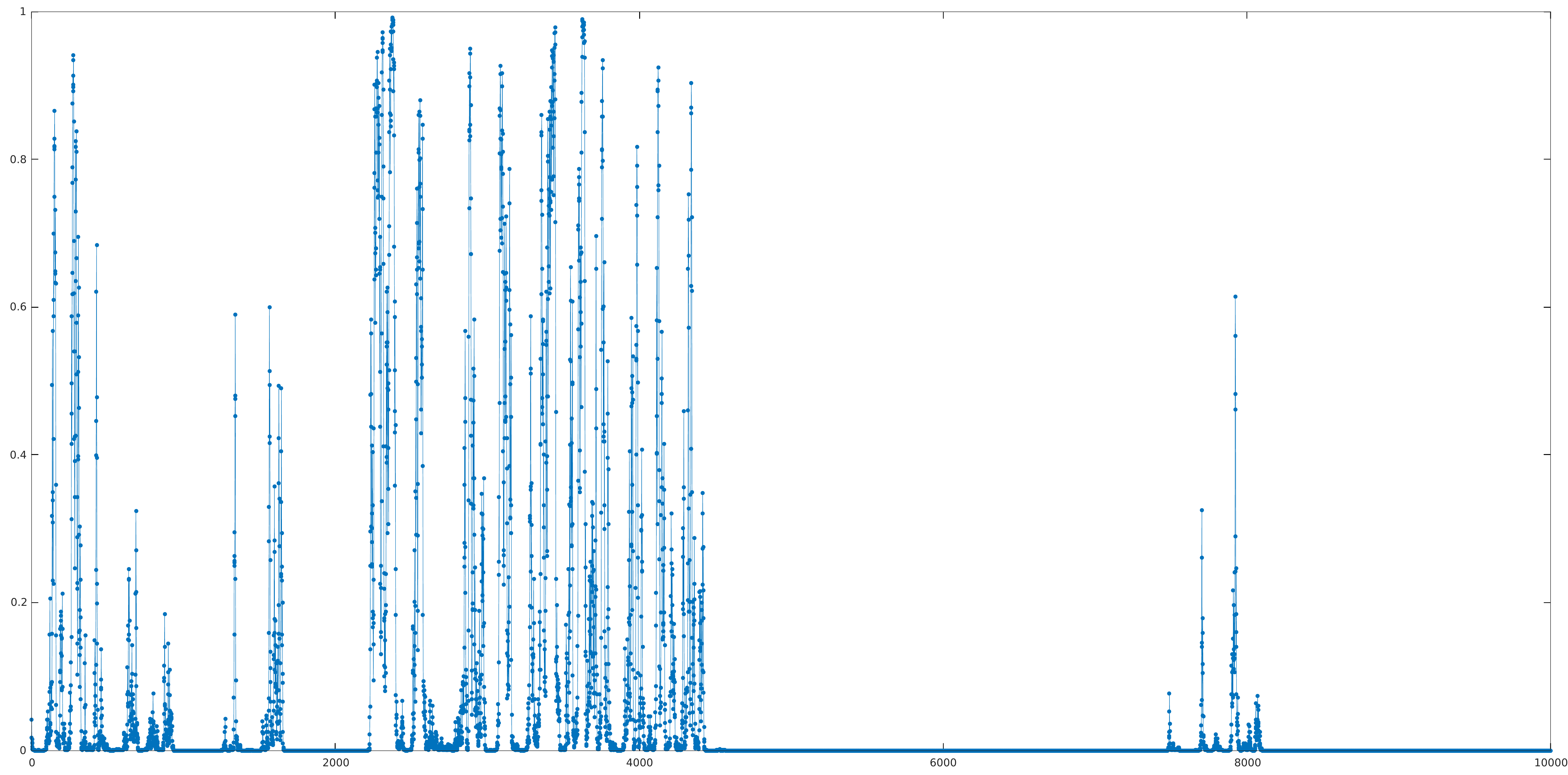} 
\caption{This plot features a perturbation of the skew product map considered in Figure~\ref{f:-1+2y}. 
Shown is a time series of the $x$-coordinate of 
$
(y,x) \mapsto \left(3 y \mod 1 , \frac{x e^{-1+2y}}{  1 + x(e^{-1+2y}-1)}  + \frac{1}{5} x^2(x-1) \right)
$ 
on $\I \times [0,1]$. 
\label{f:-1+2ypert}}
\end{center}

\end{figure}
As this makes $\I\times \{1\}$ repelling on average, we find the phenomenon of on-off intermittency only near $\I \times \{0\}$.

We formulate two theorems expressing aspects of on-off intermittency. The results are formulated in terms of the skew product maps
$\hat{G}$ on $\I\times [0,1]$.
The results can of course be phrased for the corresponding skew product systems $G$ 
on $\I \times \mathbb{R}$, that is in terms of chaotic walks on $\mathbb{R}$. 

For $x,p \in (0,1)$ with $x<p$, define
\[
T(y) = \min \{ n > 0 \; ; \; \hat{g}^n_y(x) > p \}.
\]
The following result states that $T$ has finite values almost everywhere,
but the average of the escape time $T$ is infinite.

\begin{theorem}\label{t:expisinf}
Let $\xi: \I \to \mathbb{R}$ be a smooth strictly monotone map satisfying \eqref{e:xiaverage0}.
For $r_0$ small enough, the following holds for $\hat{G} \in \hat{\mathcal{S}}_{r_0}$.
\begin{enumerate}
\item
$T(y) < \infty$ for Lebesgue almost all $y \in \I$;
\item
$
\int_{\I}  T(y) \, dy = \infty.
$
\end{enumerate}
\end{theorem}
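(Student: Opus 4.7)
The plan is to conjugate to the $\mathbb{R}$-coordinate via $h$: set $x_0 = h^{-1}(x)$ and $p_0 = h^{-1}(p)$, so that $T(y)$ is the first $n$ with $g_y^n(x_0)>p_0$, and
\[
g_y^n(x_0) \;=\; x_0 + S_n\xi(y) + \sum_{i=0}^{n-1} R(E_2^i(y),g_y^i(x_0)),
\]
where $S_n\xi(y) = \sum_{i=0}^{n-1}\xi(E_2^i(y))$, $|R|\le Cr_0$ globally, and $|R(y,x)|\le Ce^{x}$ near $x=-\infty$.

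For part (1) I would reduce to proving $\limsup_{n\to\infty} g_y^n(x_0) = +\infty$ for Lebesgue almost every $y$, which immediately forces $T(y)<\infty$ a.e. Because $\xi$ is smooth, strictly monotone and satisfies $\int_{\I}\xi=0$, one has $\xi(0)\neq 0$ (otherwise monotonicity forces $\xi$ to be one-signed on $(0,1]$ and contradicts the integral condition); hence $\xi$ cannot be a coboundary for $E_2$, since any continuous coboundary must vanish at the fixed point $0$. The central limit theorem for H\"older observables over the doubling map of \cite{boygor97} then applies with strictly positive variance, and yields $\limsup_n S_n\xi(y)=+\infty$ almost everywhere. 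To pass from the cocycle to the perturbed walk I would exploit the two regimes of $R$: as long as the orbit stays below some level $-K$ the perturbation is of size $O(e^{-K})$ and the walk essentially tracks the cocycle, whereas in the bounded window $[-K,p_0]$ the accumulated perturbation is at most $Cr_0 n$, which is dominated by the $\sqrt{n}$-sized fluctuations of $S_n\xi$ on the time scale $n\sim r_0^{-2}$ when $r_0$ is small.

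For part (2) I would carry out the Markov approximation advertised in the abstract. Refining the dyadic Markov partition of $E_2$ to depth $N$ and replacing $\xi$ by a piecewise constant approximant $\xi_N$ converts the cocycle into a Markov walk $X_n = x_0 + S_n\xi_N(y)$; after blocking over length-$N$ windows its increments become i.i.d., bounded, and (up to an arbitrarily small recentring) mean zero. For such a null-recurrent random walk, the first passage time $T_N$ of $X_n$ above $p_0$ satisfies $\mathbb{E}[T_N]=\infty$ by Wald's identity: if $\mathbb{E}[T_N]$ were finite, then $\mathbb{E}[X_{T_N}]=x_0<p_0$, contradicting $X_{T_N}>p_0$. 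The conclusion $\int_{\I} T(y)\,dy = \infty$ follows once this lower bound is transferred from the Markov approximation back to the chaotic walk.

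The main obstacle is precisely this transfer step. Uniform approximation $\|\xi-\xi_N\|_\infty \le C\,2^{-N}$ alone yields only the crude bound $|S_n\xi - S_n\xi_N| \le C n\,2^{-N}$, which is useless for large $n$. The remedy is to exploit the exponential decay of correlations of $E_2$ so that $S_n\xi-S_n\xi_N$ behaves diffusively rather than ballistically in $n$, and to couple the chaotic walk with the Markov walk so that the slow tail of $\mathbb{P}\{T_N>n\}$ is inherited by $\mathrm{Leb}\{T>n\}$. The contribution of the perturbation $R$ is a secondary error concentrated near $\I\times\{0\}$, absorbed using its exponential flatness at $-\infty$ through the same coupling.
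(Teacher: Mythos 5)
Your plan for part (1) has a genuine gap in the step passing from the cocycle $S_n\xi$ to the perturbed walk $g_y^n(x_0)$. You argue that the accumulated perturbation $\sum_i R(\cdot,\cdot)$ is at most $Cr_0 n$ and is ``dominated by the $\sqrt n$ fluctuations of $S_n\xi$ on the time scale $n\sim r_0^{-2}$.'' That comparison is only valid up to times of order $r_0^{-2}$; for $n\gg r_0^{-2}$ the linear error dominates, and since $R$ can be one-signed it can produce a systematic leftward drift that pushes the walk to $-\infty$. The exponential flatness $|R(y,x)|\le Ce^{x}$ does not immediately save you either: while the orbit lingers near $-\infty$ each $R$-term is tiny, but the lingering times are precisely the quantities you have not yet controlled, and $\limsup_n S_n\xi=+\infty$ for the \emph{unperturbed} cocycle does not by itself give recurrence to a bounded window for the \emph{perturbed} walk. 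The paper avoids this by replacing the walk with a family of explicit one-sided Markov minorants (the skew products $F_N$ in Lemma~\ref{l:approx} chosen so that $v_i\le x_i$), and then extracting quantitative escape and return probabilities from Proposition~\ref{p:escapeprob} and Proposition~\ref{p:a>0AB}, with constants controlled through Lemmas~\ref{l:boundDV}--\ref{l:boundV-}. Those drift bounds encode exactly the worst case that your soft tracking argument glosses over.

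For part (2), two issues. First, the blocked increments $\sum_{j=kN}^{(k+1)N-1}\xi_N(\omega_j)$ are \emph{not} i.i.d.: $\xi_N(\omega_j)$ is a function of the $N$-digit window $(\omega_j,\dots,\omega_{j+N-1})$, so consecutive blocks share $N-1$ binary digits and are strongly dependent. You would need a further decoupling (e.g.\ gaps between blocks, which change the walk) before invoking Wald. Second, and more fundamentally, even granting $\mathbb{E}[T_N]=\infty$ for the Markov walk, this does not transfer to $\int T\,dy=\infty$ without a monotone comparison between $T$ and $T_N$; you flag this yourself as the ``main obstacle,'' but the coupling you gesture at is not constructed and does not clearly give the inequality in the right direction. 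The paper's Proposition~\ref{p:infinite} resolves this by choosing the Markov approximant to \emph{majorize} the chaotic walk (so $x_k\le w_k$, hence $T\ge T_{\mathrm{Markov}}$), and then proving not that $\mathbb{E}[T_N]=\infty$ for one $N$, but a \emph{family} of lower bounds $\mathbb{E}[T]\ge q_{A_2}\,\mathbb{E}(U_{A_1})\ge C e^{C\alpha_{N_0}A_2}(e^{CA_1}+2^{-N})^{-1}$ that diverge as $N\to\infty$ when $A_1,A_2$ are scaled like $-CN$. That monotone-majorant-plus-diverging-lower-bound scheme, not a coupling and not a single Wald identity, is the mechanism that makes the transfer rigorous.
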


For a subset $U$ of $\I$ write $\mathbbm{1}_U$ for the characteristic function of $U$.

\begin{theorem}\label{t:birkhoff00}
Let $\xi: \I \to \mathbb{R}$ be a smooth strictly monotone map satisfying \eqref{e:xiaverage0}. 
For $r_0$ small enough, the following holds for $\hat{G} \in \hat{\mathcal{S}}_{r_0}$.

Assume $L_0 = 0$ and $L_1>0$.
Let $U$ be a compact interval $[\epsilon,1] \subset (0,1]$ with $\epsilon>0$ small. 
Take $x \in (0,1)$.
Then  for Lebesgue almost all $y \in \I$,
the number of iterates $\hat{g}^i_y (x)$ in $U$ is infinite, but
\[
\lim_{n\to \infty} \frac{1}{n} \sum_{i=0}^{n-1} \mathbbm{1}_U ( \hat{g}^i_y (x)  ) = 0.
\]
\end{theorem}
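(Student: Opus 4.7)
My plan is to deduce the statement from Theorem~\ref{t:expisinf} by decomposing the orbit into excursions and using that each typical excursion below level $\epsilon$ has infinite expected length. I work in the chaotic-walk coordinate $z=h^{-1}(x)\in\mathbb{R}$, where the set $U$ corresponds to a half-line $[M,\infty)$ with $M = h^{-1}(\epsilon)$. By \eqref{e:gy} the walk has (asymptotically) zero mean increment near $-\infty$ and drift $-L_1 < 0$ near $+\infty$.

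\textbf{Step 1 (infinite visits).} A Lyapunov-function argument at $+\infty$, built from the negative drift $-L_1$, shows $\limsup_n z_n(y) < \infty$ almost surely, so the orbit cannot escape upward. On the other hand, if the orbit stayed in $(-\infty, M)$ from some time on, it would be driven by a near-zero-drift chaotic walk of the type in \eqref{e:walk}, which is recurrent (the Atkinson-type criterion \eqref{e:xiaverage0}), contradicting such confinement. Hence the orbit crosses $M$ infinitely often, which gives assertion~(i).

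\textbf{Step 2 (excursion decomposition and infinite expected return).} Let $\tau_0 < \tau_1 < \cdots$ enumerate the visits of $\hat g_y^i(x)$ to $U$, and let $\sigma_k = \min\{i > \tau_k : \hat g_y^i(x) < \epsilon/2\}$ be the subsequent drop below $\epsilon/2$. Because $L_1 > 0$, a uniform contraction estimate on $[\epsilon/2,1]$ (standard large-deviation bound for sums of $\ln \hat g_y'$ near $1$) yields $\sup_k \int_{\I} (\sigma_k - \tau_k)\,dy < \infty$, so the bulk of the inter-visit gap $\tau_{k+1}-\tau_k$ is the climb-back time $\tau_{k+1}-\sigma_k$. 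Applied with initial point $\hat g_y^{\sigma_k}(x) \le \epsilon/2$ and threshold $p = \epsilon$, Theorem~\ref{t:expisinf}(2) gives $\int_{\I}(\tau_{k+1}-\sigma_k)\,dy = \infty$.

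\textbf{Step 3 (Cesàro density vanishes).} Let $N_n = \#\{0\le i < n : \hat g_y^i(x)\in U\}$. For any truncation $K$ set $W_j^{(K)} = \min(\tau_{j+1}-\sigma_j, K)$. Because $E_2$ is ergodic for Lebesgue and the excursions are driven by the iterated base dynamics through a Markovian coding (the Markov-partition approximation mentioned in the abstract), Birkhoff's theorem yields
\[
\liminf_{k\to\infty} \frac{1}{k}\sum_{j<k} W_j^{(K)} = \int_{\I} W^{(K)}(y)\,dy =: m_K,
\]
and by monotone convergence $m_K \to \infty$ as $K\to\infty$ in view of Step~2. Since $\tau_k \ge \sum_{j<k} W_j^{(K)}$ for every $K$, this forces $\tau_k / k \to \infty$ almost surely, hence $N_n/n \to 0$, which is assertion~(ii).

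The main obstacle is the ergodic justification in Step~3: the excursion sequence $W_j^{(K)}$ is not a plain Birkhoff sum over $E_2$, because the indices $\sigma_j$ are themselves random stopping times. I would handle this either by passing to the induced map on the section $\{z < h^{-1}(\epsilon/2)\}$ and applying Kac's formula with respect to the infinite $\sigma$-finite invariant measure $\mathrm{Leb}\otimes\delta_0$ on $\I\times[0,1]$, or, more robustly, by approximating the chaotic walk by the Markov random walk constructed from a Markov partition for $E_2$ (as advertised in the abstract) and transferring the ergodic conclusion through that approximation with a control on the error that is uniform in $K$.
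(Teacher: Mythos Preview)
Your overall plan---excursion decomposition, infinite-mean returns from below versus finite-mean sojourns above, then a law of large numbers---is exactly the paper's strategy (Theorem~\ref{t:birkhoff00} is reduced to the argument for Theorem~\ref{t:birkhoff00two}, with Proposition~\ref{p:Ainfty} supplying the finite-mean sojourn). But the obstacle you flag in Step~3 is a genuine gap, and neither of your proposed fixes closes it. At the random time $\sigma_k$ the base point $E_2^{\sigma_k}(y)$ is not Lebesgue-distributed, so you cannot invoke Theorem~\ref{t:expisinf}(2) for $\int(\tau_{k+1}-\sigma_k)\,dy$; the same issue already undermines your bound on $\int(\sigma_k-\tau_k)\,dy$ in Step~2. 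Kac's formula with respect to $\mathrm{Leb}\otimes\delta_0$ is a dead end: that measure is supported on the invariant boundary $\{x=0\}$ and carries no information about return times of interior orbits. The Markov-partition approximation is indeed used in the paper, but by itself it does not endow the excursion sequence with the ergodic structure you need.

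The missing device is geometric. The paper does not follow a single orbit but an entire Lipschitz \emph{graph} $D_n:\Sigma\to\mathbb{R}$ over the base (a cone-field computation shows this class of graphs is forward-invariant under $G$). The escape and return times $m_i,n_i$ are then defined for the whole graph and hence are constant on cylinders; by~\eqref{e:nuinv}, pushing normalized Bernoulli measure on such a cylinder forward by the skew product recovers full Bernoulli measure on the image graph. This is exactly the renewal property you are missing: after each return the base is again $\nu$-distributed, the successive excursion lengths dominate i.i.d.\ samples indexed by a fixed cylinder partition, and Kolmogorov's strong law applies directly. (A secondary point: your ``uniform contraction on $[\epsilon/2,1]$'' in Step~2 is false near $x=\epsilon/2$, where the drift is essentially zero; Proposition~\ref{p:Ainfty} obtains finite expected sojourn by splitting off a bounded piece, using Proposition~\ref{p:aeescape} there, and controlling the number of passages to the tail where the drift $-L_1$ acts.)
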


The results imply that the only invariant probability measures for a skew product $\hat{G} \in \hat{\mathcal{S}}$
with Lebesgue measure as marginal, are the product measures of Lebesgue measure on $\I$ and convex combinations of delta measures at $0$ and $1$.

A similar result can be formulated for a skew product $\hat{G} \in \hat{\mathcal{S}}$  with vanishing fiber Lyapunov exponents
at both boundaries, i.e. where $L_1=0$ in \eqref{e:hatRlyap}: just replace $U$ by an interval $[\epsilon,1-\epsilon]$ with $\epsilon>0$ small. 

\begin{theorem}\label{t:birkhoff00two}
Let $\xi: \I \to \mathbb{R}$ be a smooth strictly monotone map satisfying \eqref{e:xiaverage0}. 
For $r_0$ small enough, the following holds for $\hat{G} \in \hat{\mathcal{S}}_{r_0}$.

Assume that both $L_0 = L_1 = 0$.
Let $U$ be a compact interval $[\epsilon,1-\epsilon] \subset (0,1)$ with $\epsilon>0$ small. 
Take $x \in (0,1)$.
Then  for Lebesgue almost all $y \in \I$,
the number of iterates $\hat{g}^i_y (x)$ in $U$ is infinite, but
\[
\lim_{n\to \infty} \frac{1}{n} \sum_{i=0}^{n-1} \mathbbm{1}_U ( \hat{g}^i_y (x)  ) = 0.
\]
\end{theorem}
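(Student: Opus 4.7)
The plan is to reduce everything to the chaotic-walk picture on $\I\times\mathbb{R}$ via $G=(\mathrm{id}\times h^{-1})\circ\hat{G}\circ(\mathrm{id}\times h)$, in which the target $U=[\epsilon,1-\epsilon]$ becomes a compact interval $I=[h^{-1}(\epsilon),h^{-1}(1-\epsilon)]\subset\mathbb{R}$. Under the hypotheses $L_0=L_1=0$, the local expansion $g_y(x)=x+\xi(y)+R(y,x)$ near $-\infty$ with $\int_\I\xi\,dy=0$ and $|R(y,x)|\le Ce^{x}$ is matched, after a symmetric conjugation at the endpoint $1$, by a mean-zero expansion with exponentially small remainder near $+\infty$. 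The theorem then asserts that the chaotic walk $n\mapsto g_y^n(x_0)$ visits $I$ infinitely often but with zero density, for Lebesgue-a.e. $y$.

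For the infinite-visit claim I would invoke Atkinson-type recurrence on each tail, as in the proof of Theorem~\ref{t:expisinf}. If a positive-measure set of trajectories eventually avoids $I$, each such trajectory is trapped in either $(-\infty,h^{-1}(\epsilon)]$ or $[h^{-1}(1-\epsilon),+\infty)$; on that tail the fiber dynamics differs from a pure mean-zero cocycle over $E_2$ by an exponentially small error that can be absorbed, so Atkinson's theorem forces the trajectory to return close to its starting configuration and hence pass through $I$, a contradiction. Thus $\hat{g}_y^i(x)\in U$ for infinitely many $i$, almost surely.

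For the zero-density part I would follow the Markov-approximation strategy announced in the abstract and already deployed in Theorem~\ref{t:birkhoff00}. Fixing a fine generating Markov partition $\{J_k\}$ for $E_2$, the cocycle $S_n(y)=\sum_{i=0}^{n-1}\xi(E_2^i y)$ is approximated by a Markov random walk $\hat{S}_n$ with zero-mean, positive-variance increments. A local CLT of Boyarsky--Gora type for $\hat{S}_n$ yields
\[
P(\hat{S}_n\in I)=O(n^{-1/2}),
\]
so that $\frac{1}{n}\sum_{i=0}^{n-1}P(\hat{S}_i\in I)\to 0$. Standard ergodic promotion turns this into the pathwise statement $\frac{1}{n}\sum_{i=0}^{n-1}\mathbbm{1}_I(\hat{S}_i)\to 0$ almost surely, and the Markov-approximation lemmas transfer the conclusion to the chaotic walk $g_y^i(x_0)$. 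Pulling back through $h$ recovers the claim about $\hat{g}_y^i(x)$.

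The main obstacle is the two-sided geometry. In Theorem~\ref{t:birkhoff00} the hypothesis $L_1>0$ makes $+\infty$ repelling on average, so only the left tail contributes and the analysis is essentially one-sided; here excursions into both tails are unbounded, the two asymptotic expansions of $g_y$ are not simultaneously valid, and the perturbation $R(y,x)$ is not uniformly small on or near $I$. I would handle this by decomposing time into successive excursions between visits to $I$, separately applying the null-recurrence estimate on each tail via the Markov-walk approximation, and showing that the residence time near $I$ is itself of zero density by combining the pigeonhole identity with the infinite-mean excursion time of Theorem~\ref{t:expisinf}. Patching the two one-sided Markov estimates across the (infinite but null-density) crossings of $I$ is the step I expect to require the most care.
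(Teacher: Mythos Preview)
Your proposal has the right overall intuition (null recurrence of the chaotic walk on $\mathbb{R}$), but the central step you call ``standard ergodic promotion'' is a genuine gap. From a local CLT bound $P(\hat S_n\in I)=O(n^{-1/2})$ you only get $\frac{1}{n}\sum_{i<n}\mathbb{E}[\mathbbm{1}_I(\hat S_i)]\to 0$; this does not by itself imply the almost sure statement, because $\mathbbm{1}_I(\hat S_i)$ is a function of the whole past $\omega_0,\ldots,\omega_{i-1}$ and is not covered by Birkhoff's theorem. For the simple random walk the a.s.\ vanishing of $\frac{1}{n}\sum_{i<n}\mathbbm{1}_I(S_i)$ is true but is a nontrivial fact about occupation times, not an ergodic-theorem corollary. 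Moreover, you cannot transfer a local CLT for the approximating Markov walk to the actual walk near $I$: the perturbation $R(y,x)$ is only exponentially small on the tails, not on or near $I$, so the Markov approximation controls excursions in the tails but not the fine distribution of $g_y^n(x_0)$ inside a bounded window.

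The paper's proof follows the excursion idea you sketch in your last paragraph, but makes it rigorous through a device you do not mention: instead of following a single point, it tracks the image of an entire Lipschitz graph $D_0\in\mathcal{C}_d$ under $G^n$ (a cone-field argument shows the class $\mathcal{C}_d$ is forward invariant). Because a whole curve enters or leaves $\Sigma\times[-L,L]$, the first-return and first-escape times are functions of $\omega$ alone, constant on cylinders, and hence define genuine measurable partitions of $\Sigma$. One then shows, via Proposition~\ref{p:infinite} applied symmetrically at both ends, that the expected return time to $[-L,L]$ is infinite, while the expected escape time from $[-L,L]$ is finite (Proposition~\ref{p:aeescape}). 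Kolmogorov's strong law applied to the induced i.i.d.\ structure on successive cylinders then gives $\frac{1}{n}\sum_i(n_i-m_i)\to\infty$ and $\frac{1}{n}\sum_i(m_{i+1}-n_i)$ bounded, almost surely, which yields the zero-density conclusion. The curve-tracking is precisely what resolves the difficulty you flag at the end---that excursion lengths depend on the (random) re-entry point---by making the relevant stopping times depend only on the symbolic future. Your Atkinson argument for infinite visits is plausible in spirit, but note that Atkinson's theorem applies to exact cocycles, and the tails here carry an exponentially small but nonzero inhomogeneity; the paper instead obtains a.s.\ finiteness of the escape time from each tail directly via the Markov approximation (Proposition~\ref{p:aareturn}).
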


\subsection{General displacement functions}

The results stated above are all for monotone displacement functions $\xi$.
The results are proved to hold for general smooth displacement functions $\xi$, if we replace the doubling map by an expanding map
$E_m (x) = m x \mod 1$ for large enough $m$.
We will formulate the result analogous to Theorem~\ref{t:birkhoff00}.
The class of skew product systems $\hat{G}(y,x) = (E_m (y) , \hat{g}_y (x))$ with fiber maps $\hat{g}_y$ as in Definition~\ref{d:hatSone},
is denoted by $\hat{\mathcal{S}}^m_{r_0}$.
The corresponding class of skew product systems $G(y,x) = (E_m (y) , g_y (x))$ is denoted by $\mathcal{S}^m_{r_0}$.

%
%

\begin{theorem}\label{t:birkhoffT}
Let $\xi: \I \to \mathbb{R}$ be a smooth map, not identically zero, satisfying \eqref{e:xiaverage0}. 

For $m \in \mathbb{N}$ large enough and $r_0>0$ small enough, the following holds for $\hat{G} \in \hat{\mathcal{S}}^{m}_{r_0}$.
Assume $L_0=0$ and $L_1 >0$.
Let $U$ be a compact interval $[\epsilon,1] \subset (0,1]$ with $\epsilon$ small. 
Take $x \in (0,1)$.
Then  for Lebesgue almost all $y \in \I$,
the number of iterates $\hat{g}^i_y (x)$ in $U$ is infinite, but
\[
\lim_{n\to \infty} \frac{1}{n} \sum_{i=0}^{n-1} \mathbbm{1}_U ( \hat{g}^i_y (x)  ) = 0.
\]
\end{theorem}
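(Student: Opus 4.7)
The plan is to follow the strategy of Theorem~\ref{t:birkhoff00}, replacing the use of monotonicity of $\xi$ by the Markov structure of $E_m$ with $m$ large. First I would pass to the conjugate picture on $\I\times\mathbb{R}$ via $h$: the claim $\hat g^i_y(x)\in U=[\epsilon,1]$ translates to $g^i_y(h^{-1}(x))\in [h^{-1}(\epsilon),+\infty)$, so it suffices to show that this half-line is visited infinitely often but with Ces\`aro density zero. By \eqref{e:gy}, the fiber map on $\mathbb{R}$ is a translation by $\xi(y)$ with an exponentially flat perturbation near $-\infty$, and the assumption $L_1>0$ imposes, in these coordinates, a negative average drift $-L_1$ in a neighbourhood of $+\infty$. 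The task is therefore to analyse a chaotic walk on $\mathbb{R}$ driven by $E_m$ with mean-zero increments and a uniform pushback at $+\infty$.

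The driver $E_m$ admits the standard Markov partition $I_j=[(j-1)/m,j/m]$, $j=1,\ldots,m$, with each branch $E_m:I_j\to\I$ a bijection, so that under Lebesgue measure the itinerary $a_i(y)$ defined by $E_m^i(y)\in I_{a_i(y)}$ is an i.i.d.\ sequence uniform on $\{1,\ldots,m\}$. Setting $\xi_j=m\int_{I_j}\xi$ and $\tilde\xi(y)=\xi_j$ for $y\in I_j$, the \emph{Markov cocycle} $\tilde S_n(y)=\sum_{i=0}^{n-1}\tilde\xi(E_m^i y)$ is an i.i.d.\ mean-zero random walk whose variance is strictly positive once $m$ is taken large enough that the $\xi_j$ are not all equal, which is possible since $\xi\not\equiv 0$. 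For smooth $\xi$ the uniform error $\|\xi-\tilde\xi\|_\infty=O(1/m)$, and moreover $\xi-\tilde\xi$ integrates to zero on every $I_j$, so that a Gordin-type martingale decomposition, built from the spectral gap of the transfer operator of $E_m$ on Lipschitz functions, yields $|S_n(y)-\tilde S_n(y)|=o(\sqrt n)$ almost surely, where $S_n(y)=\sum_{i=0}^{n-1}\xi(E_m^i y)$.

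With the Markov walk $\tilde S_n$ as a proxy, Chung--Fuchs recurrence for mean-zero finite-variance i.i.d.\ walks gives recurrence of $\tilde S_n$, and hence of $S_n$, to every bounded interval; combined with the pushback $-L_1$ near $+\infty$, this yields the infinite-visit part of the statement. For the Ces\`aro density part, the local central limit theorem applied to $\tilde S_n$ shows that the number of indices $i<n$ with $\tilde S_i$ in any fixed bounded interval is $O(\sqrt n)$; the negative drift at $+\infty$ confines the chaotic walk to a region of the form $(-\infty,M]$ up to short excursions, reducing the analysis to a null-recurrent walk whose occupation at any bounded level is $O(\sqrt n)$, hence of density zero. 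Transferring back via $h$ returns the two assertions on $\I\times[0,1]$.

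The principal obstacle is controlling the error $S_n-\tilde S_n$ on the diffusive scale $\sqrt n$ forced by the CLT, which requires careful use of the spectral gap of the transfer operator of $E_m$; a secondary but essential step is matching the mean-zero regime near $-\infty$ with the negatively drifting regime near $+\infty$ so that the pushback at the upper boundary produces only short excursions into $[h^{-1}(\epsilon),+\infty)$. Once both ingredients are in place and $r_0$ is small enough that the nonlinear perturbation $R$ of \eqref{e:gy} does not disturb either regime, the conclusions follow along the lines of the proof of Theorem~\ref{t:birkhoff00}.
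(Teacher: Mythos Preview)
Your outline diverges substantially from the paper's argument, and it contains a genuine gap. The claim that $|S_n-\tilde S_n|=o(\sqrt n)$ almost surely is not correct: the function $\xi-\tilde\xi$ is a mean-zero H\"older observable for $E_m$, and the condition $\int_{I_j}(\xi-\tilde\xi)=0$ for every $j$ does \emph{not} force it to be a coboundary. Generically it is not, so its Birkhoff sums obey a CLT with strictly positive asymptotic variance, and the law of the iterated logarithm then gives $\limsup_n |S_n-\tilde S_n|/\sqrt{2n\log\log n}>0$. Consequently the transfer of the local-CLT occupation bound from $\tilde S_n$ to $S_n$ collapses: knowing $\{i<n:\tilde S_i\in[a,b]\}$ has size $O(\sqrt n)$ tells you nothing about $\{i<n:S_i\in[a,b]\}$ once the error between the two walks is itself of order $\sqrt n$. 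A second, independent gap is that even a correct occupation estimate for the \emph{linear} cocycle $S_n$ would not immediately control the \emph{nonhomogeneous} walk $x_{n+1}=x_n+\xi(E_m^n y)+R(E_m^n y,x_n)$; the perturbation $R$ is only exponentially small near $-\infty$, and your sketch does not explain how the occupation-time argument survives it.

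The paper takes a different route that sidesteps both problems. It never compares $S_n$ with a single i.i.d.\ walk. Instead it approximates the skew product by step skew products over the subshifts $\Sigma_{\mathcal{A}_N}$ for \emph{all} $N$, builds martingales via the Poisson equation $(\Pi_N-\mathrm{id})\Delta_N=\Pi_N\xi_N$, and applies Doob's optional stopping theorem to obtain escape-time and escape-probability estimates (Propositions~\ref{p:aeescape}--\ref{p:infinite}, \ref{p:Ainfty}). The only place where monotonicity of $\xi$ was used in the $E_2$ case is Lemma~\ref{l:boundV-}, which supplies the uniform lower variance bound $V^-\le\sum_j\pi_{ij}\zeta_N(i,j)^2$. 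The substance of the general case is Lemma~\ref{l:boundDViter}: for an arbitrary smooth $\xi\not\equiv 0$ with $\int\xi=0$, this lower bound holds uniformly in $N$ once $m$ is large enough. With that in hand (and the analogue of Lemma~\ref{l:left}), every step of the proof of Theorem~\ref{t:birkhoff00} carries over verbatim, including the curve argument and the application of Kolmogorov's strong law to the sequence of return and escape times. If you want to repair your approach, the cleanest fix is to abandon the $\tilde S_n$ comparison and instead establish the infinite expected return time (the analogue of Proposition~\ref{p:infinite}) directly; that is the statement that drives the density-zero conclusion.
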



Above we considered interval diffeomorphisms forced by expanding maps on the interval $\I$.
Another natural context is that of interval diffeomorphisms forced by the (linearly) expanding circle maps.
Here we are given a smooth function $\xi:\mathbb{T} \to \mathbb{R}$ (with $\mathbb{T} = \mathbb{R} / \mathbb{Z}$).
This is included in Theorem~\ref{t:birkhoffT}, by taking functions $\xi: \I \to \mathbb{R}$ that give smooth functions on the circle 
$\mathbb{T} = \mathbb{R} / \mathbb{Z}$ when identifying $0$ and $1$.   
So Theorem~\ref{t:birkhoffT} contains as a special case a statement on skew products on $\mathbb{T} \times [0,1]$
that are forced by sufficiently expanding circle maps.

\subsection{Methodology}

We finish the introduction with a brief account of the approach we take in this paper. 
The reasoning will be in the setting of skew products with the real line as fiber.
We start with a chaotic walk
\[
 x_{n+1} = x_n + \xi (y_n) + R(y_n,x_n),
\]
where $y_{n+1} = E_2 (y_n)$ and $R$ is small, so that we have a nonhomogeneous walk close to a homogeneous walk.
More specific, as the chaotic walk originates from a smooth skew product system on $\I \times [0,1]$, we have that $R(y,x)$ is exponentially small in  $x$ for 
$x$ near $-\infty$.

The chaotic walk driven by the doubling map is rewritten as a chaotic walk driven by a subshift of finite type.
The subshift is obtained from a Markov partition for $E_2$. We will consider increasingly fine Markov partitions indexed by an integer $N$.
This yields subshifts $\sigma$ on sequence spaces $\Sigma_{\mathcal{A}_N}$ with an increasing number of symbols. 
Using a measurable isomorphism of $E_2$ with $\sigma$,
the system is written as a chaotic walk
\[
 x_{n+1} = x_n + \xi (\sigma^n \omega) + R (\sigma^n \omega,x_n).
\]
This chaotic walk is approximated by Markov random walks driven by subshifts, i.e. by walks of the form
\[
 x_{n+1} = x_n + \xi_N (\omega_n) + R_N(\omega_n,x_n)
\]
where $\omega = (\omega_i)_{i\in\mathbb{N}}$. 
Specifically, the maps $\xi_N$ and $R_N$ are constant in $\omega$ on cylinders of rank one. 
The approximations get better for increasing $N$ when the partition elements get smaller diameter.

Our main results are proven using results on escape times from subintervals of the line for the approximate Markov random walks,
and carefully taking limits of increasingly fine Markov partitions. 
The derivation of the results on Markov random walks is contained in the appendices.

We expect that various generalizations of the results in this paper can be achieved using the methods developed in this paper as starting point. 
One may think of driving by general expanding or by hyperbolic dynamical systems, 
and a treatment of larger classes of functions $\xi$.

%
%
%

\section{Approximation by step skew products}\label{s:approxbystep}

Although Theorems~\ref{t:expisinf} and \ref{t:birkhoff00} are formulated for $\hat{G} \in \hat{\mathcal{S}}$, the analysis mostly uses the formulation in terms of the skew product system $G$ on $\I \times \mathbb{R}$. 
We will approximate $G \in \mathcal{S}$ with step skew product systems driven by subshifts of finite type. This is done by using fine Markov partitions for $E_2$ of $\I$. 
Take a Markov partition $\mathcal{P}_N$ of $\I$ given by $K = 2^N$ partition elements 
\[
P_i = \left[\frac{i-1}{K}, \frac{i}{K}\right],
\] 
$1\leq i\leq K$. Note that 
\[E_2(P_i) = P_{2i -1} \cup P_{2i},\] indices taken modulo $K$.
Consider the resulting subshift of finite type $(\Sigma_{\mathcal{A}_N}, \sigma)$ with adjacency matrix $\mathcal{A}_N = (a_{ij})_{i, j=1}^K$ 
such that $a_{ij} = 1$ precisely if $j = 2i - 1$  or $j = 2i$ (modulo $K$). 
That is, $\Sigma_{\mathcal{A}_N} \subset \{1,\ldots,K\}^\mathbb{N}$ is given by
\[
 \Sigma_{\mathcal{A}_N} = \{  \omega \in \{1,\ldots,K\}^\mathbb{N} \; ; \; a_{\omega_i\omega_{i+1}} = 1 \textrm{ for all } i\}.
\]
Here $\omega = (\omega_i)_{i\in \mathbb{N}}$.


\begin{example}
For $N=1$,
\[
 \mathcal{A}_1 =  \left( \begin{array}{cc}  1 & 1 \\ 1 & 1  \end{array} \right),
\]
which is the full shift on two symbols $1,2$.
For $N=2$, $\mathcal{A}_2$ is the following $4\times 4$-matrix: 
\[
\mathcal{A}_2 =  \left( \begin{array}{cccc}  1 & 1 & 0 & 0 \\  0 & 0 & 1 &  1  \\  1 &  1 & 0 & 0 \\  0 & 0 & 1 &  1    \end{array} \right).
\]
It induces a subshift of finite type on sequences with four symbols $1,2,3,4$.
\end{example}

The shift $\sigma$ on $\Sigma_{\mathcal{A}_N}$ is primitive and in fact
\begin{align*}
\mathcal{A}_N^N &> 0
\end{align*}
($\mathcal{A}^N_N$ being the matrix with $1$ at every entry).

We use notation
\[
C^{0,\ldots,n-1}_{i_0,\ldots,i_{n-1}} = \{ \omega \in \Sigma_{\mathcal{A}_N} \; ; \; \omega_j = i_j, 0\le j < n \}.
\] 
Such sets are called cylinders; 
a cylinder with the first $n$ entries specified is called a cylinder of rank $n$.

The stochastic matrix $\Pi_N$ is given by 
\[
\Pi_N = \frac{1}{2} \mathcal{A}_N.
\]  
Denote the corresponding Markov measure on $\Sigma_{\mathcal{A}_N}$ by $\nu_N$,
where $\nu_N$ assigns equal measure $1/K$ to each cylinder of rank one $C^0_i$
(Appendix~\ref{a:subshift} contains more on this material).  

The following lemma contains the basic approximation result that we will use frequently in the following.
It starts by rewriting the skew product system on  $\I\times \mathbb{R}$  to a skew product system
on $\Sigma_{\mathcal{A}_N} \times \mathbb{R}$.  
The two systems are topologically semi-conjugate and they are measurably isomorphic, where we use Lebesgue measure on $\I\times \mathbb{R}$
and the product of Markov measure $\nu_N$ and Lebesgue measure on $\Sigma_{\mathcal{A}_N} \times \mathbb{R}$.

\begin{lemma}\label{l:approx}
For any given $N$,
a skew product system $G \in \mathcal{S}$ is measurably isomorphic to a skew product system 
$F: \Sigma_{\mathcal{A}_N} \times \mathbb{R} \to \Sigma_{\mathcal{A}_N} \times \mathbb{R}$,
\begin{align}\label{e:F}
F (\omega,x) &= (\sigma\omega, f_{\omega} (x)) = (\sigma \omega , x + \xi (\omega)+ R(\omega,x))
\end{align}
with
\[
\int_{\Sigma_{\mathcal{A}_N}}   \xi(\omega) \, d\nu_N (\omega) =0
\]
and 
\begin{align*}
|R(\omega,x)| &\le C e^{x},
\\
|R(\omega,x)| &\le C r_0,
\end{align*}
for some $C>0$.

The skew product system $F$
can be approximated by a skew product 
$F_N: \Sigma_{\mathcal{A}_N} \times \mathbb{R} \to \Sigma_{\mathcal{A}_N} \times \mathbb{R}$,
\begin{align*}
F_N (\omega,x) = (\sigma\omega, f_{N ,\omega} (x)) = (\sigma \omega , x + \xi_N (\omega_0)+ R_N(\omega_0,x)).
\end{align*}
Here 
\begin{align*}
\min_{\omega \in C^0_{\omega_0}}  \xi (\omega) \le \xi_N (\omega_0)  \le \max_{\omega \in C^0_{\omega_0}}  \xi (\omega). 
\end{align*}
with
\[
\int_{\Sigma_{\mathcal{A}_N}}   \xi_N(\omega) \, d\nu_N (\omega) = 0
\]
and the function $R_N$ can be any function so that
\begin{align*} 
 |R_N (\omega,x)| &\le C e^{x},
 \\ 
 |R_N (\omega,x)| &\le C r_0.
\end{align*}
There exists $C>0$ so that  
\begin{align*}
\left\vert   \xi (\omega) - \xi_N (\omega_0) \right\vert &\le C/2^N.
\end{align*}
\end{lemma}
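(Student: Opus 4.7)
The plan is to build the measurable isomorphism via the standard symbolic coding associated with the Markov partition $\mathcal{P}_N$, then construct the approximation by averaging the displacement over rank-one cylinders, using smoothness of $\xi$ to control the error.

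First I would define the coding map $\pi: \Sigma_{\mathcal{A}_N} \to \I$ by $\pi(\omega) = \bigcap_{n\ge 0} E_2^{-n}(P_{\omega_n})$. The Markov property of $\mathcal{P}_N$ under $E_2$ guarantees that this intersection is a single point for each admissible $\omega$ outside the countable set of sequences ending in $\ldots 11\overline{1}$ or $\ldots 22\overline{2}$ type ambiguities corresponding to dyadic rationals; in particular $\pi$ is a bijection off a set of $\nu_N$-measure zero, and it satisfies $\pi \circ \sigma = E_2 \circ \pi$. A direct calculation of the pushforward on cylinders shows $\pi_\ast \nu_N$ is Lebesgue measure on $\I$: each rank-one cylinder $C^0_i$ has $\nu_N$-mass $1/K$ and maps bijectively onto $P_i$ of Lebesgue measure $1/K$, with the Markov property propagating this to higher-rank cylinders. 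Then the skew product $F(\omega,x) = (\sigma\omega, g_{\pi(\omega)}(x))$ is measurably conjugate to $G$ via $(\pi \times \mathrm{id})$; writing $\xi(\omega) := \xi(\pi(\omega))$ and $R(\omega,x) := R(\pi(\omega),x)$, the integral $\int_{\Sigma_{\mathcal{A}_N}} \xi\,d\nu_N = \int_\I \xi\,dy = 0$ and the bounds on $R$ in Definition~\ref{d:hatSone} transfer verbatim.

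Next I would define the approximation. Set
\[
\xi_N(\omega_0) := 2^N \int_{P_{\omega_0}} \xi(y)\, dy,
\]
which is manifestly a function of the single symbol $\omega_0$, and lies between the min and max of $\xi$ over $C^0_{\omega_0}$ because $\xi \circ \pi$ restricted to $C^0_{\omega_0}$ takes values in $\xi(P_{\omega_0})$. Summing over $\omega_0$ gives
\[
\int_{\Sigma_{\mathcal{A}_N}} \xi_N\, d\nu_N = \frac{1}{K}\sum_{i=1}^K \xi_N(i) = \sum_{i=1}^K \int_{P_i}\xi(y)\,dy = \int_\I \xi(y)\,dy = 0.
\]
For $R_N$ I would simply choose $R_N(\omega_0,x) := 2^N \int_{P_{\omega_0}} R(\pi^{-1}(y),x)\,dy$, or any pointwise choice satisfying the required bounds; the exponential decay in $x$ and the uniform $Cr_0$ bound pass through an average.

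The final error estimate is the one place smoothness of $\xi$ is used. For $\omega \in C^0_{\omega_0}$, $\pi(\omega) \in P_{\omega_0}$, which has diameter $2^{-N}$, and $\xi_N(\omega_0)$ is the mean of $\xi$ on $P_{\omega_0}$; hence
\[
\bigl| \xi(\omega) - \xi_N(\omega_0) \bigr| \le \|\xi'\|_\infty \cdot \mathrm{diam}(P_{\omega_0}) \le \|\xi'\|_\infty \cdot 2^{-N},
\]
which is the stated $C/2^N$ bound. I do not expect real obstacles here: the only mildly delicate point is that the coding $\pi$ fails to be a bijection on a countable, hence Lebesgue-null, set of dyadic points, but this does not affect the measurable isomorphism or any of the $L^1$ identities, so the argument is essentially bookkeeping once the coding is fixed.
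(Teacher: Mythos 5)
Your proposal is correct and follows essentially the same route as the paper: symbolic coding via the Markov partition $\mathcal{P}_N$ gives a measurable isomorphism between $(E_2,\mathrm{Leb})$ and $(\sigma,\nu_N)$, which transfers $G$ to $F$, and the approximation $F_N$ is obtained by replacing $\xi$ with something constant on rank-one cylinders. The one small difference is your choice $\xi_N(\omega_0) = 2^N\int_{P_{\omega_0}}\xi\,dy$, which makes $\int \xi_N\,d\nu_N = 0$ automatic; the paper instead takes $\xi_N(\omega_0)=\xi(H_N(\tilde\omega))$ for a representative $\tilde\omega\in C^0_{\omega_0}$ and then argues the representatives can be shifted to achieve zero mean, which your averaged version circumvents cleanly.
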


\begin{remark}
The skew product system $F_N$ is a step skew product system:
the fiber map  $f_{N ,\omega}$ does not change for $\omega$ in cylinders of rank one,
and hence can be written as a function $f_{N,\omega_0}$. 
\end{remark}

\begin{proof}
For any $N$, we can map $\I$ to $\Sigma_{\mathcal{A}_N}$ by $I_N(x) = \omega$ with
\begin{align}\label{e:HSigmaT}
 \omega_i &= j \textrm{ if } E_2^i (x) \in \left[\frac{j-1}{2^N}, \frac{j}{2^N}\right). 
\end{align}
This yields the topological semi-conjugacy
\begin{align}\label{e:factor}
 \sigma \circ I_N &= I_N \circ E_2.
\end{align}
The map $H_N: \Sigma_{\mathcal{A}_N} \to \I$ given by
\[
 H_N (\omega) = \bigcap_{i\ge 0} E_2^{-i} (P_{\omega_i})
\]
is $\nu_N$-almost everywhere an inverse of $I_N$.
%
%
The system $\sigma: \Sigma_{\mathcal{A}_N} \to  \Sigma_{\mathcal{A}_N}$ with measure $\nu_N$ is therefore measurably isomorphic to
$E_2: \I \to \I$ with Lebesgue measure. 
With $G$ as in \eqref{e:GfromS},
let $F : \Sigma_{\mathcal{A}_N} \times \mathbb{R} \to \Sigma_{\mathcal{A}_N} \times \mathbb{R}$ be given by
\[
F (\omega,x) = (\sigma\omega, f_{\omega} (x)) = (\sigma \omega  , g_{H_N (\omega)} (x)).
\]
By \eqref{e:factor} we have
\begin{align*}
 F \circ (I_N,\mathrm{id}) &= (I_N,\mathrm{id}) \circ G.
\end{align*}
The skew product system $G: \I \times \mathbb{R} \to \I\times \mathbb{R}$ (using Lebesgue measure) is hence measurably isomorphic to
$F : \Sigma_{\mathcal{A}_N} \times \mathbb{R} \to \Sigma_{\mathcal{A}_N} \times \mathbb{R}$ (with measure the product of $\nu_N$ and Lebesgue measure).
Note that we use the term measurably isomorphic without demanding, as is often done, that the given measures are invariant. 

Recall from \eqref{e:gy} the notation $g_y (x) = x + \xi(y) + R(y,x)$.
We will accordingly write, by a slight abuse of notation, 
\[
f_{\omega} (x) = x + \xi (\omega)+ R(\omega,x).
\]

The skew product map $F$ is approximated by a skew product map
$F_N: \Sigma_{\mathcal{A}_N} \times \mathbb{R} \to \Sigma_{\mathcal{A}_N} \times \mathbb{R}$,
\begin{align*}
F_N (\omega,x) &= (\sigma \omega , f_{N,\omega} (x)) = (\sigma\omega , x + \xi_N (\omega) + R_N(\omega,x)),
\end{align*}
where $\xi_N (\omega)$ and $R_N (\omega,x)$ as function of $\omega$ are constant on the cylinder $C^0_{\omega_0}$.  
We choose $\xi_N (\omega) = \xi (H_N (\tilde{\omega}))$
for a fixed choice of $\tilde{\omega} \in C^0_{\omega_0}$.
We will also write $\xi_N (\omega_0)$ and $R(\omega_0,x)$ to emphasize the dependence on $\omega_0$ alone.
The function $R_N$ is a function satisfying bounds in Lemma~\ref{l:approx}.

In the above setup we get 
\begin{align*}
\int_{\Sigma_{\mathcal{A}_N}}   \xi(\omega) \, d\nu_N (\omega) &=0.
\end{align*}
By shifting the values $\tilde{\omega}$ in the cylinders $C^0_{\omega_0}$, we can achieve
\begin{align*}
\int_{\Sigma_{\mathcal{A}_N}}   \xi_N(\omega) \, d\nu_N (\omega) &=0.
\end{align*}

The bounds on $\xi-\xi_N$ follow from the facts that 
$\xi$ is $C^1$ and the diameter of each partition element $P_i$ is $1/2^N$. 
\end{proof}


Phrased in different words,
starting with a chaotic walk
\[
x_{n+1} = x_n + \xi (\sigma^n\omega) + R(\sigma^n\omega,x_n)
\]
on $\mathbb{R}$, with $\omega \in \Sigma_{\mathcal{A}_N}$, we approximate by a Markov random walk
\[
x_{n+1} = x_n + \xi_N (\omega_n) + R_N(\omega_n,x_n).
\]

Assume that $\xi$ is a smooth strictly increasing function on $\I$ with $\int_{\I} \xi(y)\,dy=0$. 
We need the following lemma, see 
Property \eqref{e:xM>L} in Appendix~\ref{a:stop}.

\begin{lemma}\label{l:left}
For $r_0$ small enough, and any $L>0$, there is $\omega$ from a set of positive probability so that for $x \in \mathbb{R}$, there is $n\in\mathbb{N}$, $f^n_\omega    (x) < x-L$ 
($f^n_\omega (x) > x+L$) and
$f^i_\omega (x) < x$ ($f^i_\omega(x) > x$) for all $0<i\le n$.
\end{lemma}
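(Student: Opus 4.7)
The plan is to exploit the strict monotonicity of $\xi$ to find a positive-measure set of symbol sequences $\omega$ along which every fiber step is bounded above by a fixed negative number, and then iterate for long enough to accumulate the desired displacement $L$.

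Since $\xi$ is strictly increasing on $\I$ with $\int_{\I} \xi(y)\,dy = 0$, one has $\xi(0) < 0$, so there exist $y_1 \in (0,1)$ and $\delta > 0$ with $\xi(y) \le -\delta$ for every $y \in [0, y_1]$. The uniform bound $|R(\omega,x)| \le Cr_0$ from Lemma~\ref{l:approx} then lets us choose $r_0$ small enough, depending only on $\xi$, so that $Cr_0 \le \delta/2$; with this choice,
\[
f_\omega(x) - x = \xi(\omega) + R(\omega,x) \le -\delta/2
\]
for every $x \in \mathbb{R}$ whenever $H_N(\omega) \in [0, y_1]$. Crucially, the constants $\delta$ and $r_0$ are fixed, independent of $L$.

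For given $L>0$, set $n = \lceil 2L/\delta \rceil$ and define
\[
\Omega_L = \{\omega \in \Sigma_{\mathcal{A}_N} \;;\; H_N(\sigma^i \omega) \in [0, y_1] \textrm{ for } i = 0, \ldots, n-1\}.
\]
Using the measurable isomorphism of $(\I, \mathrm{Leb})$ with $(\Sigma_{\mathcal{A}_N}, \nu_N)$ established in the proof of Lemma~\ref{l:approx} together with the semi-conjugacy $\sigma \circ I_N = I_N \circ E_2$, the set $\Omega_L$ corresponds up to $\nu_N$-null sets to $\{y \in \I \;;\; E_2^i(y) \in [0, y_1] \textrm{ for } i = 0, \ldots, n-1\}$, which contains the interval $[0, y_1/2^n]$ since $E_2^i(y) \le 2^i y \le y_1/2^{n-i} \le y_1$ whenever $y \le y_1/2^n$. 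Hence $\nu_N(\Omega_L) \ge y_1/2^n > 0$. For any $\omega \in \Omega_L$ and any $x \in \mathbb{R}$, iterating the step bound gives $f_\omega^i(x) - f_\omega^{i-1}(x) \le -\delta/2$ for $1 \le i \le n$, so $f_\omega^i(x) < x$ for every $0 < i \le n$ and $f_\omega^n(x) \le x - n\delta/2 \le x - L$, as required.

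The companion statement with $f_\omega^n(x) > x + L$ is symmetric: pick $y_2 \in (0,1)$ and $\delta' > 0$ with $\xi(y) \ge \delta'$ on $[y_2, 1]$, and observe that for sufficiently large $k$ the interval $[1 - 2^{-k}, 1)$ has its $E_2$-orbit remaining inside $[y_2, 1)$ for the required $n$ iterates, again giving an event of positive $\nu_N$-measure. I foresee no substantial obstacle; the argument is essentially elementary once the approximation framework of Lemma~\ref{l:approx} is in place. The only point that requires a touch of care is the order of quantifiers: one must fix $\delta$ from $\xi$, then fix $r_0$ from $\delta$, and only afterwards let $L$ vary and choose $n$ accordingly, so that a single small value of $r_0$ works simultaneously for every $L$.
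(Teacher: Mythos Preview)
Your argument is correct and is precisely a detailed fleshing-out of the paper's own proof, which consists of the single sentence ``This is obvious from the monotonicity of $\xi$ and $\int_{\I}\xi(y)\,dy=0$.'' You have supplied exactly the intended details: pick an interval near an endpoint where $\xi$ has a definite sign, make $r_0$ small enough that the perturbation $R$ cannot spoil this sign, and iterate along orbits that stay in that interval for long enough.
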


\begin{proof}
This is obvious from the monotonicity of $\xi$ and $\int_{\I} \xi(y)\,dy=0$.
\end{proof}

\section{Average return times}

Let us make explicit the convention used already in previous parts: 
we use $C$ to designate a generic constant depending only on data from the skew product system.
In particular, when considering approximations with Markov shifts driven by the shift on  $\Sigma_{\mathcal{A}_N}$ for different $N$, 
a constant $C$ is assumed to be uniform in $N$.

We start with a proposition on escape times from compact intervals.
Let $G \in \mathcal{S}$ as in \eqref{e:GfromS}.
Consider a compact interval $[A,B] \subset \mathbb{R}$ and let $x \in [A,B]$.
Define the escape time
\[
T_{[A,B]} (y) = \min \{ n >0 \; ; \; g^n_y(x) \not \in [A,B] \},
\]
taking $T_{[A,B]} (y)$ to be infinite if no such $n$ exists.


\begin{proposition}\label{p:aeescape}
%
 For Lebesgue almost all $y \in \I$,  $T_{[A,B]} (y) < \infty$.
 The average escape time from $[A,B]$ is finite:
 \[
 \int_{\I} T_{[A,B]} (y) \, dy < \infty.
 \] 
Moreover, both
\[
 p_A = Leb ( \{ y \; ; \;  x_{T_{[A,B]}}  (y) < A  \} )
\]
and $1 - p_A$ are positive.
\end{proposition}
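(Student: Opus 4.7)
The plan is to reduce the three claims to statements about the Markov random walk $F_N$ provided by Lemma~\ref{l:approx} and appeal to the stopping-time results in Appendix~\ref{a:stop}. Fix $N$ large and $r_0$ small so that $|\xi-\xi_N|\le C/2^N$ is tiny. By the measurable isomorphism of $G$ with $F$ on $\Sigma_{\mathcal{A}_N}\times\mathbb{R}$ it suffices to prove the assertions for $F$, and I obtain them by working with $F_N$ first and transferring via the closeness of the two systems.

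For $F_N$, the fibre evolution $x_{n+1}=x_n+\xi_N(\omega_n)+R_N(\omega_n,x_n)$ is a genuine Markov random walk driven by the finite-state Markov chain $(\sigma,\nu_N)$. The three inputs needed by the appendix are zero mean drift of $\xi_N$ under $\nu_N$ (from Lemma~\ref{l:approx}), uniformly bounded increments (immediate from the bounds on $\xi_N,R_N$), and the drift property of Lemma~\ref{l:left} guaranteeing that on a set of positive $\nu_N$-measure the walk moves monotonically left (respectively right) by any prescribed amount $L$. The appendix then delivers, for $F_N$, almost-sure finiteness of the escape time from a compact interval, finite expectation, and positive probability of exit on either side.

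To transfer these conclusions to $F$, I would first extract a uniform one-step escape estimate directly from Lemma~\ref{l:left}: applied with $L=B-A+1$ at the starting point $x=B$, it supplies a positive-measure set of $\omega$'s on which $f^{n_\omega}_\omega(B)<A$ for some finite $n_\omega$; truncating to $n_\omega\le n_0$ preserves measure at least some $\alpha>0$ for $n_0$ large enough. Monotonicity of $f^{n_\omega}_\omega$ in $x$ then forces $f^{n_\omega}_\omega(x)<A$ for every $x\in[A,B]$, while the side condition $f^i_\omega(x)<x$ in Lemma~\ref{l:left} rules out premature escape above $B$. The symmetric argument with rightward drift immediately yields the third assertion, that both $p_A$ and $1-p_A$ are positive. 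The same uniform one-step estimate, combined with the Markov renewal structure of $\nu_N$ on $\Sigma_{\mathcal{A}_N}$ and the small discrepancy between $F$ and $F_N$ on bounded time windows, upgrades to geometric tails for $T_{[A,B]}$, delivering the first two assertions.

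The principal obstacle is the renewal step underlying the geometric tail. Because $E_2$ is deterministic, one cannot iterate a one-step estimate directly on $G$; the honest Markov structure on $(\Sigma_{\mathcal{A}_N},\sigma,\nu_N)$ is essential, which is why the analysis passes through $F_N$. Two uniformity points deserve care: the one-step bound must also be uniform in the starting symbol $\omega_0\in\{1,\ldots,K\}$, obtained by prepending at most $N$ steps to reach a drift-enabling configuration from any state, justified by primitivity $\mathcal{A}_N^N>0$; and the constants must remain uniform in $N$ so that the approximation of $F$ by $F_N$ does not degrade over the stopping time.
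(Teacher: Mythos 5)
Your third paragraph actually contains the heart of the paper's proof, but you wrap it in machinery that is both unnecessary and technically problematic, and the extra machinery introduces a real gap.

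The paper works directly with the exact system $F$ on the coarsest shift $\Sigma_{\mathcal{A}_1} = \{1,2\}^\mathbb{N}$ (not with the step approximation $F_N$). The key observation is that by continuity and compactness of $[A,B]$ there is a \emph{fixed} finite word $\zeta_0,\ldots,\zeta_{n-1}$ such that $f^n_\omega(B) < A$ for every $\omega$ in the cylinder $C^{0,\ldots,n-1}_{\zeta_0,\ldots,\zeta_{n-1}}$; monotonicity then makes every point of $[A,B]$ escape through $A$ along such words. This is essentially your ``uniform one-step escape estimate'' obtained from Lemma~\ref{l:left}. Once this is in hand, the a.s.\ finiteness and $\int T_{[A,B]}\,dy<\infty$ drop out of the elementary fact that the first entrance time of a Bernoulli shift into a fixed cylinder of rank $n$ has finite expectation (the coin-tossing argument cited to \cite{levperwil09}). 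No Poisson equation, no martingales, no Appendix~\ref{a:stop} are needed, and the last claim on $p_A$, $1-p_A$ is symmetric.

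The gap in your proposal sits in the $F$-to-$F_N$ transfer. You plan to prove the three assertions for $F_N$ via Appendix~\ref{a:stop} and move them to $F$ by ``small discrepancy on bounded time windows,'' but the escape time $T_{[A,B]}$ is not uniformly bounded, only a.s.\ finite, so a one-step $O(2^{-N})$ closeness does not automatically control the stopping time; the errors could in principle accumulate before escape. Your own third paragraph hints at the correct fix, but the fix makes the $F_N$ detour and the appendix superfluous: once you have the positive-measure cylinder forcing escape in $\le n$ steps, the event is a pure symbolic (shift) event under $\nu$, independent of which representation of the fibre dynamics you use, and geometric domination of $T_{[A,B]}$ follows for $F$ itself. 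Your final remark that ``one cannot iterate a one-step estimate directly on $G$'' because $E_2$ is deterministic is the source of the confusion: the cylinder event only concerns the driving symbols, and the Bernoulli structure of $\nu$ on $\Sigma_{\mathcal{A}_1}$ supplies exactly the renewal structure you were looking for, with no need for $F_N$.
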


\begin{proof}
We will make use of the topological semi-conjugacy of $E_2$ on $\I$ to the shift $\sigma$ on $\Sigma = \Sigma_{\mathcal{A}_1} = \{1,2\}^\mathbb{N}$.
This is by the map $H: \I \to \Sigma$ defined by $H (y) = \omega$ with $\omega = (\omega_i)_{i \in \mathbb{N}}$ and 
\[
 \omega_i = \left\{ \begin{array}{ll}  1, &  E_2^i(y) < 1/2, \\ 2, & E_2^i (y) \ge 1/2. \end{array}\right.
\]
We have
$\sigma \circ H= H \circ E_2$. In fact, $H$ provides a measurable isomorphism between $E_2: \I\to \I$ with Lebesgue measure and $\sigma: \Sigma \to \Sigma$
with Bernoulli measure $\nu$ (the product measure coming from equal probability $1/2$ for both symbols $1$ and $2$).

Consider the topologically semi-conjugate skew product system
\[
 F(\omega,x) = (\sigma \omega , f_\omega (x))
\]
on $\Sigma \times \mathbb{R}$. 
Our assumptions give the existence of $\zeta_0,\ldots,\zeta_{n-1}$ so that
$f^n_\omega (B) < A$ if $\omega\in C^{0,\ldots,n-1}_{\zeta_0,\ldots,\zeta_{n-1}}$. 
For $\nu$-almost every $\omega \in \Sigma$ one has a first entrance time 
\[
T_\zeta (\omega) = \min \{ i>0 \; ; \; \sigma^i \omega \in C^{0,\ldots,n-1}_{\zeta_0,\ldots,\zeta_{n-1}} \}.
\]
It is standard that the expected entrance time is finite, 
see for instance \cite[Section~17.3.2]{levperwil09} (see also \cite{ghahom17}):
\[
\int_{\Sigma}  T_\zeta (\omega) \, d\nu (\omega) < \infty.
\]
Indeed this is the expected time to throw for the first time the given finite sequence of symbols $\zeta_0,\ldots,\zeta_{n-1}$ when picking two symbols i.i.d.
with probability $1/2$ each.
It follows that 
\[
\int_{\I} T_{[A,B]} (y) \, dy < \infty.
\]

Analogous to the existence of $\zeta_0,\ldots,\zeta_{n-1}$, there are $\eta_0,\ldots,\eta_{m-1}$ with $f^m_\eta (A) > B$.
Also the expected entrance time to enter the cylinder $C^{0,\ldots,m-1}_{\eta_0,\ldots,\eta_{m-1}}$ is finite.
From this it is clear that $p_A$ and $1-p_A$ are positive.
\end{proof}

Take $G \in \mathcal{S}$ and consider the chaotic walk $g^n_y(x)$. 
Let $B \in \mathbb{R}$ and $x \in (-\infty,B]$. Define the escape time
 \begin{align}\label{e:TB}
 T_B (y) = \min \{ n >0 \; ; \; g^n_y(x) \not \in (-\infty,B] \}
 \end{align}
 with $T_B(y)$ infinite if no such $n$ exists.
The next two propositions together prove Theorem~\ref{t:expisinf}.
The first proposition, Proposition~\ref{p:aareturn} shows that almost surely, points escape from $(-\infty,B]$. 
The second proposition, Proposition~\ref{p:infinite}, establishes that the average escape time is infinite.  
In both statements, $r_0$ will be small.

\begin{proposition}\label{p:aareturn}
For Lebesgue almost all $y$, $T_B(y) < \infty$.
\end{proposition}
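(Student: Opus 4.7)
The plan is to reduce the statement to a recurrence result for a nearly mean-zero Markov random walk, using the approximation of Lemma~\ref{l:approx}. I first fix a very negative threshold $A \ll 0$ so that $Ce^A$ is much smaller than the typical magnitude of $\xi$. Below level $A$ the perturbation $R(\omega,x)$ is uniformly bounded by $Ce^A$ per step, so the walk is essentially the pure mean-zero walk $x_{n+1} = x_n + \xi(\omega_n)$ driven by the shift on $\Sigma_{\mathcal{A}_N}$.

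Working on $\Sigma_{\mathcal{A}_N} \times \mathbb{R}$ with the step skew product $F_N$ of Lemma~\ref{l:approx}, the increments $\xi_N(\omega_n) + R_N(\omega_n, x_n)$ form a (nearly) mean-zero Markov random walk. By a Chung--Fuchs / Atkinson-type recurrence statement for such walks, which I expect is precisely the content of Appendix~\ref{a:stop}, the walk returns from $(-\infty, A]$ into $[A, \infty)$ $\nu_N$-almost surely; choosing $A$ sufficiently negative absorbs the $O(e^A)$ drift from $R_N$ into the recurrence estimate. Once in $[A, B]$, the proof of Proposition~\ref{p:aeescape} furnishes a finite $m$ and a cylinder $C^{0,\ldots,m-1}_{\eta_0,\ldots,\eta_{m-1}}$ of positive $\nu_N$-measure such that $f^m_\eta(A) > B$. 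By monotonicity of $f^m_\omega$ in the fiber, on this cylinder $f^m_\omega(x) > B$ for \emph{every} $x \in [A, B]$, so from any $x \in [A, B]$ the probability of exit through $B$ within $m$ steps is at least some $\delta > 0$ that is uniform in $x$. A standard renewal argument using the strong Markov property of $F_N$ then gives, for the $k$-th successive entry time $\tau_k$ into $[A, B]$, the bound $P(T_B = \infty) \le (1-\delta)^k$ for every $k$, so $T_B < \infty$ almost surely for $F_N$. Transferring back to $F$, and hence to $G$, through the measurable isomorphism of Lemma~\ref{l:approx} together with the bound $|\xi - \xi_N| \le C/2^N$, yields the claim.

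The main obstacle is ensuring that each visit to $[A, B]$ affords a fresh, uniform probability of escape. Recurrence only provides return to $[A, \infty)$, not directly to $[A, B]$, and one must rule out that the trajectory drifts above $B$ without actually crossing out; this can be arranged either by replacing $B$ by a nearby level and applying Proposition~\ref{p:aeescape} on an enlarged compact interval containing $B$, or by using Lemma~\ref{l:left} to force a downward excursion whenever the walk overshoots. A second delicate point is the transfer from the Markov approximation $F_N$ back to $F$: since $\xi_N$ and $R_N$ are constant on rank-one cylinders while $\xi$ and $R$ are not, one either takes $N$ large so that the $O(2^{-N})$ discrepancy is negligible over the $m$ cylinder steps that control $\delta$, or works directly with $F$ on the refined partition $\bigvee_{j < m} \sigma^{-j}\mathcal{P}_N$, where the $\xi$-increments are again well-approximated by constants on cylinders.
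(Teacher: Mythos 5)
Your proposal rests on an appeal to a Chung--Fuchs/Atkinson-type recurrence result for the approximating Markov walk $F_N$, asserting that the walk returns from $(-\infty,A]$ into $[A,\infty)$ $\nu_N$-almost surely once $A$ is sufficiently negative. That step does not go through as stated, and it is precisely the crux the paper's proof has to handle differently. Atkinson's theorem applies to a cocycle $x \mapsto x + \sum_{i<n}\xi(E_2^i y)$ (a genuine group extension), not to the state-dependent walk $x_{n+1} = x_n + \xi_N(\omega_n) + R_N(\omega_n,x_n)$. More importantly, the step skew product $F_N$, when bounded from below against the true system, carries a residual \emph{negative} drift coming from two sources: the perturbation $R_N$ (of order $e^{A}$ on $(-\infty,A]$) \emph{and} the discretization error $|\xi - \xi_N| \le C/2^N$. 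The latter does not vanish by making $A$ more negative. With a strictly negative drift, however small, the one-sided walk from $(-\infty,A_1]$ is not recurrent: the probability of never returning is \emph{positive}, of order $|\alpha|$ by Proposition~\ref{p:escapeprob}. So the renewal argument you then run, $P(T_B=\infty) \le (1-\delta)^k$, fails at the first step, because the walk has a positive chance of drifting to $-\infty$ before ever re-entering $[A,B]$.

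The paper's proof is structured to accommodate exactly this failure of recurrence at finite $N$. It partitions $(-\infty,B]$ into three pieces $(-\infty,A_2]$, $[A_2,A_1]$, $[A_1,B]$ with $A_1,A_2$ scaling like $-N$, uses a fixed coarse approximation on $[A_2,B]$ and an $N$-dependent fine approximation on $(-\infty,A_1]$, and then bounds the \emph{probability of never escaping} by $pq/(1-p)$, where $q \le CN/2^N$ comes from Proposition~\ref{p:escapeprob} (the nonzero failure probability of the drifting approximation) and $p$ from Proposition~\ref{p:a>0AB}. The crucial point is that this bound is shown to tend to $0$ as $N\to\infty$; almost-sure escape is obtained only in the limit, never for a fixed $N$. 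Your proposal also treats $F_N$ and $F$ as interchangeable via the measurable isomorphism of Lemma~\ref{l:approx}, but that isomorphism is between $F$ and $G$; $F_N$ is merely an approximation of $F$, and the correct transfer is the monotone comparison $v_n \le x_n$, so that $\{T_B=\infty\} \subset \{T_{B,v}=\infty\}$ and the measure of the right-hand side is driven to zero. Your observation that the perturbation $R$ is small near $-\infty$ and your escape-probability argument near $B$ are both in the right spirit and do appear in the paper, but without the quantitative treatment of the drift at finite $N$ the argument has a genuine hole.
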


\begin{proof}
Apply Lemma~\ref{l:approx} to rewrite the chaotic walk as
\begin{align}\label{e:xn}
x_{n} &= f^{n}_{\omega}(x) = x_{n-1} + \xi(\sigma^{n-1}\omega) + R(\sigma^{n-1}\omega,x_{n-1}),
\end{align}
driven by the shift on $\Sigma_{\mathcal{A}_N}$ for some given $N$.
In this setting, \eqref{e:TB} becomes
\[
T_B(\omega) = \min \{n> 0\; ; \; f^n_{\omega}(x) \notin (-\infty, B]\}. 
\]
We must show
\begin{align*}
\nu_N \big( \{\omega \in \Sigma_{\mathcal{A}_N}\; ; \; T_B(\omega) < \infty\}\big) = 1.
\end{align*}

Divide $(-\infty, B]$ into three subintervals $(-\infty, A_2], [A_2, A_1]$ and $[A_1, B]$. 
If we start at $x \in [A_2,B]$ then by Proposition~\ref{p:aeescape}, with probability one some iterate will have left $[A_2, B]$: 
either through the right boundary point $B$ or through the left boundary point $A_2$.
In the first case the iterate has escaped from $(-\infty,B]$.
In the latter case, there may be a return to $[A_1, B]$ after which a further iterate may escape $(-\infty, B]$. 

We approximate the cocycle \eqref{e:xn} by Markov random walks, with different approximations on the (overlapping) intervals $(-\infty, A_1]$ and $[A_2, B]$.  
On $[A_2,B]$ we use an approximation by a Markov random walk driven by the shift on $\Sigma_{\mathcal{A}_{N_0}}$ for a sufficiently large but fixed $N_0$.
On $(-\infty,A_1]$ we use an approximation by a Markov random walk driven by the shift on $\Sigma_{\mathcal{A}_{N}}$, using increasingly fine partitions $\mathcal{P}_N$
for increasing $N$.
Also the boundary points $A_2$ and $A_1$ will depend on $N$:
we will take $A_1,A_2$ so that 
\[
 B-A_1,  A_1-A_2 = C_1 N
\]
for a suitable $C_1$, subject to conditions made explicit later on. 
Take a compact interval $[A_1,A_1 + E]$ lying inside $(A_2,B)$ with $E > \max_{\omega\in \Sigma_{\mathcal{A}_N}} |\xi(\omega)|$.
%
Without loss of generality, we may assume $x \in [A_1, A_1 + E]$ since $x > A_1$ will hold for $N$ large enough and any starting point in $[A_1 + E, B]$ will leave $(-\infty, B]$ in a finite number of iterates 
if $x$ does. 

As said, on $[A_2,B]$ we use an approximation by a Markov random walk driven by the shift on $\Sigma_{\mathcal{A}_{N_0}}$ for a sufficiently large but fixed $N_0$.
By Lemma~\ref{l:approx}, given $\omega \in \Sigma_{\mathcal{A}_{N_0}}$, the cocycle  \eqref{e:xn}
is approximated by a Markov random walk
\begin{align}\label{e:Mvn}
v_{n+1} &= v_n + \xi_{N_0}(\omega_n) + R_{N_0}(\omega_n, v_n).
\end{align}
Choose the approximation so that
\begin{align*}
\xi_{N_0} (\omega_0) + R_{N_0} (\omega_0,x) &\le \min_{\omega \in C^0_{\omega_0}}  \xi (\omega) + R (\omega,x). 
\end{align*}
Then
\[
x_0, x_1, \dots, x_n \in [A_2, B] \implies v_i \le x_i \text{ for all } 0\le i\le n+1.
\]
We have 
\[
 |\xi (\omega) - \xi_{N_0} (\omega_0)| \le C/2^{N_0} 
\]
for some $C>0$.
Further $|R_{N_0}| \le C r_0$ for some $C>0$.

On $(-\infty, A_1]$, applying 
Lemma~\ref{l:approx}, \eqref{e:xn}
is approximated by a Markov random walk
\begin{align}\label{e:wn}
v_{n+1} &= v_n + \xi_N(\omega_n) + R_N(\omega_n, v_n)
\end{align}
with
\begin{align*}
\xi_{N} (\omega_0) + R_{N} (\omega_0,x) &\le \min_{\omega \in C^0_{\omega_0}}  \xi (\omega) + R (\omega,x). 
\end{align*}
We have $ |\xi (\omega) - \xi_N (\omega_0)| \le C/2^N$ and  
$R_N(\omega, v) \le -C e^{-|A_1|}$ on $(-\infty, A_1]$.
Again 
\[
x_0, x_1, \dots, x_n \in (-\infty, A_1] \implies v_i \le x_i \text{ for all } 0\le i\le n+1.
\]


Let $v_0=x \in [A_1,A_1 + E]$. We can of course view the random walk driven by the shift on $\Sigma_{\mathcal{A}_{N_0}}$ 
as a random walk driven by the shift on $\Sigma_{\mathcal{A}_N}$.
Doing so we can say that for $\nu_N$-almost all $\omega \in \Sigma_{\mathcal{A}_N}$, there will be a finite first escape time $S_1$ 
of the random walk \eqref{e:Mvn} from $[A_2,B]$ (Proposition~\ref{p:aeescape}).
If $v_{S_1} < A_2$, then $v_{S_1}\in [A_2 - E, A_2]$ and
the random walk continues by following \eqref{e:wn}. There may be a return time $T_1 > S_1$ with $v_{T_1} > A_1$.
Then $v_{T_1}$ lies in $[A_1, A_1 + E]$.
After this we repeat by continuing with \eqref{e:Mvn}. 
Given $v_0 = x\in [A_1, A_1 + E]$ and $\omega\in \Sigma_{\mathcal{A}_N}$, we obtain a sequence of successive escape and return values:
\begin{gather*}
v_{S_1}\in [A_2 - E, A_2],\\
v_{T_1}\in [A_1, A_1 + E],\\
v_{S_2}\in [A_2 - E, A_2],\\
\vdots
\end{gather*}
stopping if either the walk starting from $v_{S_i}$ 
does not return to $[A_1,B]$, or if the walk starting from $v_{T_i}$ has escaped from $[A_2,B]$ through the right boundary point $B$. 
Formally $T_0=0$, 
\begin{align*}
S_i(\omega) &= \min \{n>T_{i-1}\; ; \; v_n(\omega) \notin [A_2, B] \},
\\
T_i(\omega) &= \min \{n>S_i\; ; \; v_n(\omega) \notin (-\infty, A_1] \}.
\end{align*}
Thus
 \[
 \nu_N \big( \{\omega \in \Sigma_{\mathcal{A}_N}\; ; \; T_B(\omega) < \infty\}\big) \geq \nu_N \big( \{\omega \in \Sigma_{\mathcal{A}_N}\; ; \; T_{B, v}(\omega) < \infty\}\big),
 \]
 with 
 \[
 T_{B, v}(\omega) := \min \{n> 0\; ; \; v_n(\omega) \notin (-\infty, B]\}.
 \]


Let 
\begin{align*}
p_{S_i} &= \nu_N \big( \{v_{S_i(\omega)} < A_2 \vert v_{\omega_{T_{i-1}}}, \omega_{T_{i-1}}\}  \big),
\\
p_{T_i} &= \nu_N \big( \{T_i(\omega) = \infty \vert v_{\omega_{S_i}}, \omega_{S_i} \}\big).
\end{align*}
We invoke the results in the appendices to obtain bounds on $p_{S_i}$ and $p_{T_i}$. 
We wish to apply Proposition~\ref{p:escapeprob} to get a bound on $p_{T_i}$.
Lemma~\ref{l:boundDV} and Lemma~\ref{l:boundV-} ensure bounds as required in Appendix~\ref{a:stop} for the Markov random walk.
The choices for $A_1,A_2$ ensure that \eqref{e:BG} holds for suitable $C_1$.
However, with $B$ as in \eqref{e:BG}, $A_1-A_2 \le B \le A_1-A_2 +E$ is not fixed as assumed in
Proposition~\ref{p:escapeprob}.
But the proof of Proposition~\ref{p:escapeprob}, see in particular \eqref{e:C5est}, gives 
%
$p_{T_i} \le C N / 2^N + C N e^{-A_1}$.
Choosing $C_1$ appropriately we find 
\[
 p_{T_i} \le CN / 2^N
\]
for some $C>0$. 

As $A_2-A_1 = C_1 N$, taking $C_1\ge 1$, $\sigma^{T_i-S_i}$ maps cylinders of rank one in $\Sigma_{\mathcal{A}_{N}}$ 
to a union of cylinders of rank one in $\Sigma_{\mathcal{A}_{N_0}}$.
From Proposition~\ref{p:a>0AB} (after reflecting through the origin) we infer $1 - p_{S_i} \ge  C \alpha_{N_0} e^{C \alpha_{N_0} N}$, or equivalently,
\[
p_{S_i} \le  1 - C \alpha_{N_0} e^{C \alpha_{N_0} N}
\] 
for some $C>0$. Here $\alpha_{N_0} <0$ and $ | \alpha_{N_0} | \le C r_0$ for some $C>0$, if $N_0$ is large enough.

Observe that $T_{B,v} (\omega)= \infty$ occurs if $S_1,T_1,\ldots,S_n$ are bounded for some $n\ge 1$, $v_{S_i(\omega)} < A_2$ for $1\le i\le n$ 
and $T_n (\omega) = \infty$. 
For each $i\ge 1$, $p_{S_i}\le p$ and $p_{T_i} \le q$ with
\begin{enumerate}
\item $p$ is the probability to leave $[A_2, B]$ through $A_2$ if we start at $A_1$,
\item $q$ is the probability to stay in $(-\infty, A_1]$ if we start at $A_2$.
\end{enumerate}
We have
\begin{align*}
 p &\le 1 - C \alpha_{N_0} e^{C \alpha_{N_0} N},
 \\
 q &\le C N/ 2^N.
\end{align*}
Therefore,
\begin{equation}\label{e:nullset}
\nu_N \big( \{\omega \in \Sigma_{\mathcal{A}_N}\; ; \; T_{B, v}(\omega) = \infty\}\big) \le pq ( 1 + \sum_{i=1}^\infty p^i (1-q)^i) < \dfrac{pq}{1-p}.
\end{equation}

%

The right hand side of \eqref{e:nullset} goes to zero as $N\to \infty$. 
Hence $T_{B, v}(w)<\infty$ for almost all $\omega$ and the same is true for $T_B(\omega)$. 
\end{proof}

The following proposition states that the average return time of a point to a compact interval is infinite.
The arguments in the proof are reminiscent of the proof of Proposition~\ref{p:aareturn}, but uses an approximation by step skew product systems
from above instead of from below.  A similar strategy was employed in \cite[Lemma~5.2]{ghahom17} for nonhomogeneous random walks with i.i.d. steps.

\begin{proposition}\label{p:infinite}
\[
\int_{\I} T_B(y) \, dy = \infty.
\] 
\end{proposition}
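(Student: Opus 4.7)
The plan is to approximate the chaotic walk from above by a Markov random walk $v_n$ satisfying $v_n\ge x_n$ pathwise, so that the escape times obey $T^v_B\le T^x_B$ and
\[
\int_{\I} T_B(y)\,dy\ge \mathbb{E}_{\nu_N}[T^v_B]
\]
for every $N$. The task then reduces to exhibiting $\mathbb{E}_{\nu_N}[T^v_B]\to\infty$ as the partition $\mathcal{P}_N$ is refined. This mirrors Proposition~\ref{p:aareturn} but reverses the direction of the Markov comparison and concludes via a Wald-type lower bound rather than a probability bound.

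First I would apply Lemma~\ref{l:approx} to rewrite the walk on $\Sigma_{\mathcal{A}_N}$, with $\xi_N,R_N$ chosen so that $\xi_N(\omega_0)+R_N(\omega_0,x)\ge \xi(\omega)+R(\omega,x)$ for all $\omega\in C^0_{\omega_0}$ (taking cylinder-wise maxima suffices to produce $v_n\ge x_n$). Combining $\int\xi_N\,d\nu_N=0$, the Lipschitz bound $|\xi_N(\omega_0)-\xi(\omega)|\le C/2^N$ on each cylinder, and $|R_N|\le Ce^v$, the conditional mean step of $v_n$ at position $v$ is bounded by
\[
\delta_N(v)\le C/2^N+Ce^v.
\]

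Following the scheme of Proposition~\ref{p:aareturn}, I would fix auxiliary levels $A_2<A_1<B$ with $A_1=-C_1N/2$ and $A_1-A_2=C_1 N$ for a suitable $C_1$, so that on $(-\infty,A_1]$ the drift is at most $\delta_N:=C/2^N+Ce^{-C_1 N/2}$, of order $1/2^N$ when $C_1$ is large enough. Optional stopping for the super-martingale $v_n-n\delta_N$ at $T^v_{A_1}=\inf\{n:v_n>A_1\}$ yields
\[
\mathbb{E}[T^v_{A_1}\mid v_0\le A_1]\ge \frac{A_1-v_0}{\delta_N},
\]
which is at least $(A_1-A_2)/\delta_N\sim C_1 N\cdot 2^N/C$ when $v_0\in[A_2-E,A_2]$. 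As in the proof of Proposition~\ref{p:aareturn}, the probability of at least one deep excursion from $[A_1,A_1+E]$ down into $[A_2-E,A_2]$ (inherited by the upper-bounding $v_n$) is bounded below uniformly in $N$ by combining Proposition~\ref{p:aeescape} on $[A_2,B]$ with the escape-probability bounds of Proposition~\ref{p:a>0AB} (after reflection). Multiplying then gives $\mathbb{E}_{\nu_N}[T^v_B]\gtrsim C_1 N\cdot 2^N$, and letting $N\to\infty$ delivers $\int_{\I} T_B(y)\,dy=\infty$.

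The main obstacle is arranging the conditional mean step of $v_n$ to be genuinely small in $(-\infty,A_1]$: the exponential flatness of $R$ at $-\infty$ provides the $Ce^{A_1}$ contribution, while the Lipschitz control on $\xi$ together with the freedom $\int\xi_N\,d\nu_N=0$ from Lemma~\ref{l:approx} provides the $C/2^N$ contribution. The linear calibration $A_1=-C_1 N/2$ is what makes both contributions comparably small, after which the appendix machinery on Markov random walks converts the small drift into the desired lower bound on the expected excursion time. A minor additional point is checking that the super-martingale optional stopping is legitimate, which follows from the boundedness of the increments of $v_n$ and the finite-expectation bound on $T^v_B$ for fixed $N$ (ensured by the strict positivity of $\delta_N$).
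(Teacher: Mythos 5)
Your overall strategy matches the paper's exactly: approximate from above so $v_n\ge x_n$ and $\mathbb{E}[T^v_B]\le\int T_B$; split into $[A_2,B]$ and $(-\infty,A_1]$ with $A_1,A_2$ scaling linearly in $N$; use Proposition~\ref{p:a>0AB} for a lower bound on the probability of a deep excursion from near $A_1$ down to $A_2$, and Proposition~\ref{p:a>0} for a lower bound on the expected return time from a deep excursion; multiply and let $N\to\infty$.

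Two points in your write-up are imprecise, one of which is a real error. First, $v_n-n\delta_N$ is \emph{not} a supermartingale for a Markov random walk: the one-step conditional expectation $\mathbb{E}[\xi_N(\omega_n)\mid\mathcal{F}_n]=\sum_j\pi_{\omega_{n-1}j}\xi_N(j)$ depends on $\omega_{n-1}$ and is not zero even though $\int\xi_N\,d\nu_N=0$. The correct martingale is $v_n-n\alpha-\Delta_N(\sigma^{n-1}\omega)+\Delta_N(\omega)$, with $\Delta_N$ solving the Poisson equation; since $|\Delta_N|\le CN$ (Lemma~\ref{l:boundDV}), this correction eats up an $O(N)$ amount, and the requirement $A_1-A_2\ge G=CN$ from \eqref{e:BG} is precisely what absorbs it. You do invoke the appendix machinery at the end, so this is mostly a matter of saying it cleanly, but the calibration $A_1-A_2=C_1N$ should be presented as a necessity forced by the Poisson term, not a free choice. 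Second, and more substantively: the probability $q_{A_2}$ of a deep excursion is \emph{not} bounded below uniformly in $N$. Proposition~\ref{p:a>0AB} gives $q_{A_2}\ge c\alpha_{N_0}e^{c\alpha_{N_0}A_2}$, and since $A_2=-O(N)$ this decays exponentially in $N$. The proof works only because that decay rate $e^{-c\alpha_{N_0}C_2N}$ can be made slower than the growth rate of $\mathbb{E}[U_{A_1}]\sim(e^{CA_1}+2^{-N})^{-1}$, by first choosing the drift $\alpha_{N_0}$ small (i.e.\ $N_0$ large and $B$ a large negative number) and then $C_1,C_2$ appropriately. Your claimed bound $\mathbb{E}_{\nu_N}[T^v_B]\gtrsim C_1N\cdot 2^N$ is therefore too strong; the true bound is a product of an exponentially small $q_{A_2}$ and a roughly exponentially large time, diverging only after tuning the constants. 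This rate-balancing is the crux of the paper's Part~3 and should not be glossed over as a uniform lower bound on the probability.
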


\begin{proof}
Given $x_0<x$, with positive probability one can find an $n$ so that  $f^n_y (x)<x_0$. See Lemma~\ref{l:left}. 
So we may have an assumption on a large enough distance  $B-x_0$ for a starting point $x_0$. 
There is for the same reason no loss in assuming that $B$ is a large negative number.

We make use of this by 
splitting an interval $(-\infty,B]$ in parts and considering the random walks separately in different parts.
 Take points $A_2 < A_1 < B$.  
 The mutual distances between these points will be determined later.
 We first consider escape of $x_k$ from $[A_2,B]$, for $x_0 \in [A_1,B]$, and in particular escape through the boundary $A_2$. 
 That will be followed by a second part of the argument treating a subsequent return to $[A_1,B]$. \\
 
\noindent {\sc Part $1$ (Initial point in $[A_1,B]$, escape from $[A_2, B]$).}
By Lemma~\ref{l:approx}, 
for a given Markov partition $\mathcal{P}_{N_0}$,  \eqref{e:F} can be written as the chaotic walk
\begin{align}\label{e:systemx1}
x_{k+1} &= x_k + \xi   (\sigma^k \omega) + R ( \sigma^k \omega,x_k)
\end{align}
on $\Sigma_{\mathcal{A}_{N_0}} \times \mathbb{R}$. 
For initial points in $[A_1,B]$ we compare \eqref{e:systemx1} with a random walk driven by the subshift for a given Markov  partition
$\mathcal{P}_{N_0}$ for some sufficiently large $N_0$ (conditions determining the size of $N_0$ will be made explicit below).
That is, we compare with
\begin{align*}
w_{k+1} &= w_k + \xi_{N_0}  (\omega_k) + R_{N_0} (\omega_k,x_k)  
\end{align*}
such that $x_k \le w_k$ for $x_0=w_0$, as long as $w_k \le B$. 
It follows from Lemma~\ref{l:approx}
that we may take $R_{N_0}$ so that
\[
R_{N_0} (\omega_k,x_k) \le \beta_{N_0} = C e^{B} + C/2^{N_0}. 
\]
We get $\beta_{N_0}$ small by 
choosing $B$ and $N_0$ large.


Define 
\[
T_{[A_2,B]} (y) = \min \{ n > 0 \; ; \; x_n \notin [A_2,B] \}
\]
and 
\[
S_{[A_2,B]} (\omega) = \min \{ n>0 \; ; \; w_n \notin [A_2,B] \}.
\]
We abbreviate this as $T(y)$ and $S(\omega)$. 
Write 
\begin{align*}
S_1 = \{ \omega \in \Sigma_{\mathcal{A}_{N_0}}  \; ; \;  w_{S (\omega)} < A_2  \}.
\end{align*}
Define $p_{A_2} = \nu(S_1)$
and $q_{A_2} = \mathrm{Leb} \{ y \in \I \; ; \;  x_{T(y)} < A_2 \}$.
Then $q_{A_2} \ge p_{A_2}$.

Proposition~\ref{p:a>0AB} yields a lower bound 
\[
p_{A_2} \ge C \alpha_1 e^{C \alpha_1 A_2}
\]
with $\alpha_1 = \beta_{N_0}$.
To apply the proposition we need a large enough distance $B  - x_0$ as expressed by \eqref{e:ABG}. As discussed earlier, we may assume this to hold.
In the lower bound, $\alpha_1$ is small if $r_0$ is small and $N_0$ large.
Hence 
 \[
 q_{A_2} \ge C \alpha_1 e^{C \alpha_1 A_2}.
 \] 

\noindent {\sc Part $2$ (Initial point in $(-\infty,A_2]$, escape from $(-\infty, A_1]$).}
For points that end up to the left of $A_2$ we estimate the time it takes to return to larger values, to the right of $A_1$.
  By \eqref{e:HSigmaT} for a given Markov partition $\mathcal{P}_{N}$,  \eqref{e:F} can be written as the random walk
  \begin{align}\label{e:systemN}
  x_{k+1} &= x_k + \xi  (\sigma^k \omega) + R( \sigma^k \omega,x_k)
  \end{align}
  on $\Sigma_{\mathcal{A}_{N}} \times \mathbb{R}$.
  We do this for increasing values of $N$. Assume given an initial point $x_0 < A_2$.
Consider the first escape time 
\[
U_{A_1} (\omega) = \min \{ n>0  \; ; \;  x_n(\omega) > A_1   \}.
\] 

A bound for $\mathbb{E} (U_{A_1})$ is derived as in Proposition~\ref{p:a>0}. Care must be taken since
the solution to the Poisson equation is unbounded in $N$. 
The random walk \eqref{e:systemN} is compared with
\begin{align*}
w_{k+1} &= w_k + \xi_N (\omega_k) + R_N (\omega_k,x_k) 
\end{align*}
on $\Sigma_{\mathcal{A}_N} \times \mathbb{R}$, 
such that $x_k \le w_k$ as long as $w_k \in (-\infty, A_1]$. 
We can bound 
\[
R_N (\omega_k,x_k) \le \beta_N = Ce^{A_1} + C/2^N.
\]
Consider the stopping time 
\[
V_{A_1} (\omega) = \min \{  n>0 \; ; \;  w_n (\omega) > A_1 \}.
\] 
Observe that $U_{A_1}(\omega) \ge V_{A_1} (\omega)$.
Proposition~\ref{p:a>0} yields 
$\mathbb{E} (V_{A_1}) \ge C / \beta_N$,
assuming $A_1 - A_2 \ge G$ where $G$ appears as condition \eqref{e:BG} that is needed to apply Proposition~\ref{p:a>0}. 
Lemma~\ref{l:boundDV} gives that we can take $G = CN$ for some $C>0$.  
We therefore take 
\begin{align*}
 A_2 &= -C_2 N,
 \\
 A_1 &= -C_1 N
\end{align*}
for uniform constants $C_1,C_2$ so that $A_1$, $A_2$ go to $-\infty$ as $N\to \infty$ and $A_1 - A_2 \ge G$. 
We conclude that under these conditions, for some $C>0$, 
\begin{align*}
\mathbb{E} (U_{A_1}) \ge C / \beta_N.
\end{align*}
The constant $C$ is uniform in $N$ and also in $\omega_{-1}$. \\


\noindent {\sc Part $3$ (Combining previous parts).}  
We combine the information in the previous parts on the two random walks in $(-\infty,A_1]$ and $[A_2,B]$ and let $N$ go to $\infty$ to prove the result.
For $x_0 < B$, let 
\[
T (y) = \min\{  n>0 \; ;  \; x_n > B \}.
\]
For the random walk $w_{n+1} = w_n + \xi_N (\omega_n) + R_N (\omega_n , x)$,
Part $1$ and Part $2$  above give
\begin{align*}
\mathbb{E}(T) &\ge q_{A_2} \mathbb{E} (U_{A_1})
\\
&\ge C e^{C \alpha_{N_0} A_2} (e^{C A_1} + 1 / 2^N)^{-1}.
\end{align*}
With our choices for $A_1,A_2$, this goes to infinity as  $N\to \infty$.
The proposition follows.
\end{proof}

\section{Intermittent time series: two neutral boundaries}

This section contains the proof of Theorem~\ref{t:birkhoff00two}.

\begin{proof}[Proof of Theorem~\ref{t:birkhoff00two}]
 On $\I \times \mathbb{R}$, define a class $\mathcal{C} = \mathcal{C}_d$ of differentiable functions
 \[
 \mathcal{C}_d =  \{ C \in C^1( \I , \mathbb{R}) \; ; \;  |C'| \le d \}.
 \]
For a skew product system of the form
$G_0(y,x) = ( E_2 (y) , x + \xi(y))$ we have
\[
 DG_0(y,x) = \left( \begin{array}{cc}  2 & 0 \\  \xi' (y) & 1 \end{array} \right).
\]
It follows that $DG_0$ maps a cone field of cones $\{ (v,u) \in \mathbb{R}^2 \; ; \; |u| \le d |v| \}$ with $d > \sup_{y\in \mathbb{T}} | \xi'(y) |$, inside itself. 
This property is easily seen to be robust under perturbations: for small $r_0$ the same holds true for perturbations $G$ of $G_0$ in $\mathcal{S}_{r_0}$.
%
Therefore, for suitable $d>0$,  $G$ maps the graph of a function in $\mathcal{C}_d$ into two curves
that are both the graph of a function in $\mathcal{C}_d$.

To continue we find it convenient to use, as in the proof of Proposition~\ref{p:aeescape}, the topological semi-conjugacy of $E_2$ on $\I$ to the shift $\sigma$
on $\Sigma = \Sigma_{\mathcal{A}_1} = \{1,2\}^\mathbb{N}$, $\sigma \circ I_1 = I_1 \circ E_2 $.
The space $\Sigma$ is equipped with Bernoulli measure $\nu$.
We henceforth consider functions in $\mathcal{D}_d$, which are the functions $D$ that can be written as 
$D (\omega) = C ( I_1 (y))$ for some $C \in \mathcal{C}_d$.

Given $\omega \in \Sigma$ and a graph $D_0 \in \mathcal{D}_d$, we obtain a sequence
of functions
\[
 D_n  (\eta) = f^n_{\omega_0\ldots\omega_{n-1}\eta} ( D_0 (\omega_0\ldots\omega_{n-1}\eta)),
\]
where $\omega_0\ldots\omega_{n-1}\eta$ stands for the concatenated sequence $\omega_0\ldots\omega_{n-1}\eta_0\eta_1\ldots$.
We have $D_n \in \mathcal{D}_d$ for all $n\ge 0$.
The interval $U\subset (0,1)$ corresponds to an interval $[-L,L] \subset \mathbb{R}$, with $L$ large. 
Start with $D_0$ so that $\textrm{graph}\, (D_0) \subset \I \times [-L,L]$.
Because $D_n \in \mathcal{D}_d$ for all $n\ge 0$, we can 
consider a sequence $D_{n_i}, D_{m_i}$ of curves that are first returns to $[-L,L]$ respectively its complement.
More precisely,
\begin{align*}
\textrm{graph}\, D_{n_i} &\subset \Sigma \times [-L,L], \\ 
\textrm{graph}\, D_{m_i} &\subset \Sigma \times (\mathbb{R}\setminus [-L,L])
\end{align*}
and 
\begin{align*}
n_{i+1} &= \min\{ n > m_{i+1} \; ; \; \textrm{graph}\, D_{n_{i+1}} \subset \Sigma \times [-L,L], \textrm{graph}\, D_{n_{i+1}-1} \not\subset \Sigma \times [-L,L]\},
\\
m_{i+1} &= \min \{ n > n_{i} \; ; \;  \textrm{graph}\, D_{m_{i+1}} \not \subset \Sigma \times [-L,L], \textrm{graph}\, D_{m_{i+1}-1} \subset \Sigma \times [-L,L]\}.
\end{align*}
Letting $n_0=0$, we thus obtain
a sequence 
\[
n_0 < m_1 < n_1 < m_2 < n_2 < \cdots
\] 
of successive escape times.
By results from the previous section, given $\omega \in \Sigma$ from a set of full Bernoulli measure, we obtain an infinite sequence of return times $n_i - m_i$.

Let $\bar{\nu}$ be normalized Bernoulli measure on $C^{0,\ldots,k-1}_{\omega_n,\ldots,\omega_{n+k-1}}$,
\[
 \bar{\nu} (A) = \nu(A) / \nu (C_{\omega_n,\ldots,\omega_{n+k-1}}^{0,\ldots, k-1})
\]
 for Borel sets $A \subset C_{\omega_n,\ldots,\omega_{n+k-1}}^{0,\ldots, k-1}$.
 On the graph of $D_n$ we can consider the measure $(\textrm{id},D_n)_* \nu$, which we also refer to as Bernoulli measure.
 Then
\begin{align}\label{e:nuinv}
 F^k_* \circ (\textrm{id},D_n)_*  \bar{\nu} &=  (id,D_{n+k})_*  \nu
\end{align}
In words, the push forward measure, under the skew product system $F^k$, of normalized Bernoulli measure on a cylinder of rank $k$ inside the graph of $D_n$,
equals Bernoulli measure on $D_{n+k}$.

Let 
\[
S(\omega) = \min \{ n>0 \; ; \; f^n_\nu (L) < L \;\textrm{for all}\; \nu \in C^{0,\ldots,n-1}_{\omega_0,\ldots,\omega_{n-1}} \}
\]
be the first return, for a cylinder, to $(-L,L)$ starting from $L$.
As $D_{m_i} (\omega) > L$ and the maps $f_\omega$ are monotone, the return time $n_i - m_i$
of $D_{m_i}$ to $[-L,L]$ is larger or equal than $S(\omega)$.
The return time $S$ defines a partition $\mathcal{Q} = \{ Q_i\}$ on $\Sigma$ so that $S$ is constant precisely on the partition elements $Q_i$.
Each $Q_i$ is a cylinder and the union of all cylinders has full Bernoulli measure $\nu$.

As remarked above, 
if 
\[T_i = \min \{ n>0 \; ; \; f^n_{\omega} (D_{m_i} (\omega)) < L \;\textrm{for all}\; \nu \in C^{0,\ldots,n-1}_{\omega_0,\ldots,\omega_{n-1}}  \},
\] 
then $T_i (\omega) \ge S(\omega)$.
The corresponding partition $\mathcal{R}_i$ of $\Sigma$, corresponding to cylinders on which $T_i$ is constant, is therefore the same or finer than $\mathcal{Q}$;
each partition element in $\mathcal{Q}$ is a union of one or more partition elements of $\mathcal{R}_i$.

Given a large integer $M$,
take the partition $\mathcal{O}$ of $\Sigma$ consisting of the cylinders $Q_i$ from $\mathcal{Q}$ up to rank $M$ and the remaining subset $\hat{Q}$. 
Note that 
\[
\int_{\cup_i Q_i} S (\omega) \, d\nu(\omega)   =   \sum_{i, Q_i \in \mathcal{O}} \nu(Q_i) S_i,
\]
where $S_i$ is the value of $S$ on $Q_i \in \mathcal{O}$.
We claim that $\int_{\Sigma}  S(\omega) \, d\nu (\omega) = \infty$.
To see this note that if $x_n (\omega) < L-d$, then $x_n (\eta)<L$ for any $\eta \in C^{0,\ldots,n-1}_{\omega_0,\ldots,\omega_{n-1}}$.
The claim follows from this and Proposition~\ref{p:infinite}.
Since $\int_{\Sigma}  S(\omega) \, d\nu (\omega) = \infty$, we find that for each $C>0$ there is $M$ so that
\[\sum_{i, Q_i \in \mathcal{O}} \nu(Q_i) S_i > C.\]

Consider the following stochastic process. Pick $\omega_0,\omega_1,\ldots$ at random, independently from two symbols $1,2$ with probabilities $1/2$ each. 
Given $\omega_0,\ldots,\omega_{n-1}$, we have return times $n_0 < m_1 < \cdots$ up to the largest number $n_k$ or $m_k$ that is at most $n$.
To fix thoughts, assume given $\omega_0,\ldots,\omega_{m_i-1}$ defining the function $D_{m_i}$.
Continue with the next random variables $\omega_{m_i}, \omega_{m_i+1},\ldots$.
As these are picked independently from the past symbols, the probability for $\omega_{m_i}\omega_{m_i+1}\cdots$ to end up in $Q_i$ equals $\nu(Q_i)$.
On $Q_i$ we know that $n_i - m_i \ge S_i$.
By \eqref{e:nuinv} we can use this stochastic process description and apply Kolmogorov's strong law to get for $\nu$-almost every $\omega$,
\[
\lim_{n\to \infty} \frac{1}{n} \sum_{j=1}^{n-1} \mathbbm{1}_{Q_i} (\omega_{n_j}) = \nu(Q_i).
\]

We must also consider the escape from $[-L,L]$, which proceeds similarly.
Recall that $x_n(\omega) > L+d$ implies 
that $x_n(\eta)>L$ for all $\eta \in C^{0,\ldots,n-1}_{\omega_0,\ldots,\omega_{n-1}}$.
Consider 
\[
v_{n+1} = \min \{ f_{\sigma^n \omega} (v_n) , L + d \}.
\] 
starting at $v_0 = L$. Let
\[
S_{-L} (\omega) = \min \{ n>0 \; ; \;  v_n(\nu) < -L \;\textrm{for all}\; \nu\in C^{0,\ldots,n-1}_{\omega_0,\ldots,\omega_{n-1}}   \}.
\] 
As in Proposition~\ref{p:aeescape},
\[
 \int_{\I} S_{-L} (\omega)  \, d\nu (\omega) < \infty.
\]
Corresponding statements hold for escape through $L$:
consider 
\[
w_{n+1} = \max \{ f_{\sigma^n \omega} (w_n) , - L - d \}
\] 
with $w_0 = L$. For
\[
S_{L} (\omega) = \min \{ n>0 \; ; \;  v_n(\nu) > L \;\textrm{for all}\; \nu\in C^{0,\ldots,n-1}_{\omega_0,\ldots,\omega_{n-1}}   \},
\] 
we have
\[
 \int_{\I} S_{L} (\omega)  \, d\nu(\omega) < \infty.
\]
Define 
\[
S = \min \{ S_{-L}, S_L \}
\]
and note
%
\begin{align}\label{e:S-LLfinite}
 \int_{\I} S (\omega) \, d\nu(\omega) < \infty.
\end{align}
If 
\[T_i = \min \{ n>0 \; ; \; f^n_{\omega} (D_{n_i} (\omega)) \not\in [-L,L] \},
\] 
then $T_i (\omega) \ge S(\omega)$.

We follow previous reasoning.
The function 
$S$
defines a partition $\mathcal{Q} = \{ Q_i\}$ on $\Sigma$ so that $S$ 
is constant on each partition element $Q_i$.
Each $Q_i$ is a cylinder and the union of all cylinders has full Bernoulli measure $\nu$.
On each curve $D_{m_i}$ the escape time from $(-L,L)$ is larger than or equal to $S$.
Given $\omega \in \Sigma$ from a set of full Bernoulli measure, we obtain an infinite sequence of escape times $n_i - m_i$.
Given a large integer $M$,
take the partition $\mathcal{R}$ of $\Sigma$ consisting of cylinders $R_i$ from $\mathcal{Q}$ up to rank $M$ and the remaining subset $\hat{R}$. 

Let $S_i$ is the value of $S$ on $R_i$.
By \eqref{e:S-LLfinite},
for any $\varepsilon$ one can find $M$ large enough so that  
\[
\int_{\Sigma} S (\omega) \, d\nu(\omega)   \le \sum_i \nu(R_i) S_i + \varepsilon.
\]
Again by Kolmogorov's strong law, for $\nu$-almost every $\omega$,
\[
\lim_{n\to \infty} \frac{1}{n} \sum_{j=1}^{n-1} \mathbbm{1}_{R_i} (\omega_{n_j}) = \nu(R_i).
\]

We conclude that for almost all $\omega \in \Sigma$,
\[  
 \lim_{n\to \infty}  \frac{1}{n} \sum_{i=0}^{n-1}  (n_{i} - m_i) = \infty  
\]
and 
\[  
 \lim_{n\to \infty}  \frac{1}{n} \sum_{i=0}^{n-1}  (m_{i+1} - n_i)  < C  
\]
for some $C>0$.
The theorem follows.
\end{proof}

\section{Intermittent time series: a neutral and a repelling boundary}

For $B \in \mathbb{R}$ and fixed $x_0>B$, let
\[
T_B = \min \{ n>0 \; ; \; x_n < B \}.
\]

\begin{proposition}\label{p:Ainfty}
 If $a>0$ then 
 \[
  \int_{\I} T_B (y) \, dy < \infty.
 \]
\end{proposition}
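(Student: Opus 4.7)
The plan parallels the architecture of Proposition~\ref{p:aareturn}, but uses the hypothesis $a>0$ (i.e., $L_1>0$) to obtain a \emph{negative} mean step near $+\infty$, which is what turns an infinite expected hitting time into a finite one. The strategy is to split the line into a tail $(M,\infty)$ and a middle interval $[B,M]$ for a large threshold $M$, handle each piece separately, and combine them via a renewal argument.

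First I would apply Lemma~\ref{l:approx} to rewrite the walk as
\[
x_{n+1} = x_n + \xi(\sigma^n\omega) + R(\sigma^n\omega,x_n)
\]
on $\Sigma_{\mathcal{A}_N}\times\mathbb{R}$ for some fixed $N$. The exponential flatness of the skew product near $\I\times\{1\}$ means that $R(\omega,\cdot)$ approaches its limit $R(\omega,+\infty)$ exponentially fast as $x\to+\infty$, and
\[
\int_{\Sigma_{\mathcal{A}_N}}\bigl(\xi(\omega)+R(\omega,+\infty)\bigr)\,d\nu_N(\omega)=-a<0.
\]
Choose $M$ large enough that for every $x\ge M$ the cylinder-constant Markov random walk of Lemma~\ref{l:approx} has mean increment $\le -a/2$ on every cylinder of rank one; the $\mathcal{O}(2^{-N})$ approximation error is absorbed by also taking $N$ sufficiently large.

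On the middle interval $[B,M]$, Proposition~\ref{p:aeescape} provides a uniformly finite expected escape time together with a uniformly positive probability $p_B>0$ of exiting through the left endpoint $B$. On the tail $(M,\infty)$, I would dominate the approximating walk by a Markov random walk with mean increment $\le -a/2$ and invoke the appendix stopping-time machinery (the negative-drift analogue of Proposition~\ref{p:a>0}, essentially a Wald-type bound): this gives that the expected hitting time of $(-\infty,M]$ from $x_0\in(M,\infty)$ is at most $C(x_0-M)/a<\infty$, uniformly in the conditioning symbol $\omega_{-1}$.

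Combining these ingredients, from any starting point in $(B,M]$ the walk makes a sequence of attempts. Each attempt either exits through $B$ in finite expected time, with probability at least $p_B$, or escapes into $(M,\infty)$ and returns to $[B,M]$ in finite expected time, at which point the process renews at a new point of $[B,M]$. The number of attempts before a successful exit through $B$ is dominated by a geometric variable with parameter $p_B$, and a standard Wald identity then gives $\int_{\I} T_B(y)\,dy<\infty$. The main obstacle is the tail estimate on $(M,\infty)$: one has to check that the appendix-type hitting-time bound applies uniformly across all starting points there, and that the exponential decay of the perturbation at $+\infty$ is strong enough to maintain uniform negativity of the drift above $M$.
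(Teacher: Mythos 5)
Your proposal is essentially the paper's own argument: the paper likewise approximates by a Markov random walk from above, locates a threshold $\tilde L$ beyond which the drift is $\le -a/2$ (using the exponential flatness at $+\infty$), invokes Proposition~\ref{p:a>0} (reflected) to get a finite expected return time from $[L,\infty)$ to $[B,\tilde L]$, uses Proposition~\ref{p:aeescape} on the compact piece, and closes with the same geometric/renewal bound $\mathbb{E}(T_B)\le C\sum_i i\,p_B(1-p_B)^i<\infty$. The only cosmetic difference is that you state the tail bound with explicit linear dependence on the starting point, whereas the paper only needs it for iterates entering the tail just above $L$ and so records a constant bound.
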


\begin{proof}
We use an approximation of $x_{n+1} = f_\omega (x_n)$,
\begin{align}\label{e:v-p} 
v_{n+1} &= f_{N_0,\omega_n} (v_n)
 \end{align}
directed by $\omega \in \Sigma_{\mathcal{A}_{N_0}}$,
so that $x_n \le v_n$ when $x_0=v_0$, as long as $v_n > B$.
The condition $a > 0$ allows us the following formulas:
on $[B,\infty)$ we can write
$f_{\omega} (x) = x + \eta (\omega) + S(\omega , x)$ with 
\[\int_{\Sigma_{\mathcal{A}_N}} \eta (\omega) \, d\nu (\omega) = 0\] and \[\lim_{x\to \infty} R (\omega,x) = -a < 0.\]
Write \eqref{e:v-p} as 
\[
 v_{n+1} = v_n + \eta_{N_0} (\omega_n)  + S_{N_0} ( \omega_n, v_n)
\]
driven by the shift on $\Sigma_{\mathcal{A}_{N_0}}$.
We can take 
\[\int_{\Sigma_{\mathcal{A}_{N_0}}}   \eta_{N_0} (\omega) \, d\nu (\omega) = 0\] 
and 
\[\lim_{x\to \infty} S_{N_0} (\omega,x) = -a < 0.\]
For $\tilde{L}$ large we thus find $|S_{N_0} (\omega,x) +a| < a/2$ on $[\tilde{L},\infty]$.

With $L > \tilde{L}$, assume $z_0$ is in $[L,\infty)$. 
Let $S_L$ be the escape time of $z_n$ out of $[L,\infty)$;
\[
S_{\tilde{L}} (\omega) = \min \{ n >0 \; ; \; z_n < \tilde{L} \} . 
\]
By Proposition~\ref{p:a>0}, assuming that $L-\tilde{L}$ is large enough, 
we get 
\[
\mathbb{E} (S_{\tilde{L}}) \le C.
\] 
An escape out of $[B,\infty)$ is realized by a finite number of passages through $[L,\infty)$, starting and ending with iterates in $[B,\tilde{L}]$, 
followed by the escape to $(-\infty,B)$.
The expected escape times for these escapes from $[B,L]$, through either the left or right boundary, and from $[L,\infty)$ are bounded. 
Therefore, with $p_B$ bounding the probability to leave through $L$ for an escape from $[B,L]$ and starting point in $[B,\tilde{L}]$,
\[
\mathbb{E} (T_{B})  \le C \sum_{i=1}^\infty i p_{B}  (1-p_{B})^i < \infty.
\]
\end{proof}
%
%
%

\begin{proof}[Proof of Theorem~\ref{t:birkhoff00}]
Theorem~\ref{t:birkhoff00} is proved in the same way as Theorem~\ref{t:birkhoff00two}, invoking Proposition~\ref{p:Ainfty}. We leave the details to the reader.
\end{proof}

\appendix




\section{Chaotic walks}\label{a:chaoticwalk}

For $\omega = (\omega_i)_{i \in \mathbb{N}}$ in $\Sigma = \{1,2\}^\mathbb{N}$,  denote by $\sigma$ the left shift operator
$(\sigma \omega)_i = \omega_{i+1}$ acting on $\Sigma$.
Consider the standard random walk $x_n (\omega)$ given by
\[
x_{n+1} = x_n + \xi (\sigma^n \omega)
\] 
on $\mathbb{R}$, where for
$\omega \in \Sigma$ the step $\xi (\omega)$ is defined by
\[
\xi (\omega) = \left\{ \begin{array}{ll} 1, & \omega_0 = 1, \\ -1, & \omega_0 = 2.   \end{array}\right.
\]  
Introduce the skew product system $F: \Sigma \times \mathbb{R} \to   \Sigma \times \mathbb{R}$,
\[
 F(\omega,x) = (\sigma \omega , x + \xi (\omega)).
\]
Then the fiber coordinates in $\mathbb{R}$ of $F^n (\omega,x)$ equal $x_n (\omega)$ with initial condition $x_0 (\omega) = x$. 
The natural measure on $\Sigma$ is the Bernoulli measure $\nu$ given  probabilities $1/2, 1/2$ for the symbols $1,2$.

Since the shift $\sigma: \Sigma \to \Sigma$ is topologically semi-conjugate to the doubling map
$E_2 : \I \to \I$, the skew product system $F$ is topologically semi-conjugate
to $G: \I \times \mathbb{R} \to \I \times \mathbb{R}$,
\[
 G(y,x) = (E_2 (y) , x + \xi (y)),
\]
where now 
\[
\xi (y) = \left\{ \begin{array}{ll} 1, & y \in [0,1/2), \\ -1, & y \in [1/2,1].   \end{array}\right.
\]  
In fact, $\sigma: \Sigma \to \Sigma$ with $\nu$ as invariant measure is measurably isomorphic to
$E_2 : \I \to \I$ with Lebesgue measure.
Note that
\[
 G^n(y,x) = (E_2^n (y) , x + \sum_{i=0}^{n-1} \xi (E_2^i(y))).
\]

Going the other direction, also for other maps $\xi: \I \to \mathbb{R}$, one can identify $G$ on $\I \times \mathbb{R}$ 
with a skew product system $F$ on  $\Sigma \times \mathbb{R}$. 
Typically the steps $\xi(\omega)$ will depend on the entire sequence $\omega$.  
Following \cite{heaplaham94} we refer to a cocycle
\[
x \mapsto x + \sum_{i=0}^{n-1} \xi (E_2^i(y)),
\]
i.e. the fiber coordinate of $G^n (y,x)$,  as a chaotic walk.
We refer to \cite{sch77} for lecture notes on cocycles driven by ergodic transformations.

\section{Markov random walks}\label{a:subshift}

This appendix and the next develop material for random walks driven by subshifts of finite type. 
Appendix~\ref{a:fordoubling} will specialize to subshifts coming from Markov partitions for
doubling maps.
%


Write $\Omega$ for the finite set of symbols $\{ 1,\ldots,K\}$.
Let $\mathcal{A}=(a_{ij})_{i,j=1}^K$ be a matrix with $a_{ij}\in \{0,1\}$.
Associated to $\mathcal{A}$ is the set
$\Sigma_\mathcal{A}$  of bilateral sequences $\omega=(\omega_n)^{\infty}_{0}$ composed of symbols in $\Omega$ and
with adjacency matrix $\mathcal{A}$:
\[a_{\omega_n\omega_{n+1}}=1\] for all $n \in \mathbb{N}$.
Let 
$(\Sigma_\mathcal{A},\sigma)$ be the subshift of finite type 
on  $\Sigma_\mathcal{A}$.
The map $\sigma$ shifts every sequence $\omega \in \Sigma_\mathcal{A}$ one step to 
the left, $(\sigma \omega)_i = \omega_{i+1}$. 
The space $\Sigma_\mathcal{A}$ will be endowed with the product topology.
We assume that $\mathcal{A}$ is 
primitive,
i.e. 
\[
\exists n_0\in \mathbb{N} ~ \forall i,j \in \Omega ~ (\mathcal{A}^{n_0})_{ij}>0.
\]
This implies that
the subshift $\sigma$ is topologically mixing.

Let $\Pi=(\pi_{ij})_{i,j=1}^{K}$ be a right stochastic matrix,
i.e.
$\pi_{ij} \ge 0$ and $\sum_{j=1}^K \pi_{ij}=1$, 
such that 
$\pi_{ij}=0$ precisely if $a_{ij}=0$.
By the
Perron-Frobenius theorem for stochastic matrices, 
there 
exists 
a unique positive left eigenvector $p=(p_1,...,p_K)$ for $\Pi$ that corresponds to the eigenvalue $1$; 
i.e.
\begin{align*}
\sum\limits_{i=1}^K p_i \pi_{ij} &= p_j,
\end{align*}
for all $j \in \Omega$.
We assume that $p$ is normalized so that it is a probability vector, $\sum_{i=1}^K p_i=1$.
The distribution $p$ is the stationary distribution on $\Omega$.

For a finite word $\omega_{k_1}...\omega_{k_n}$, $k_i \in \mathbb{Z}$, 
the cylinder $C^{k_1,\ldots,k_n}_{\omega_{k_1},\ldots,\omega_{k_n}}$ (we will also use the notation $C^{k_1,\ldots,k_n}_{\omega}$) is the set
\[
C^{k_1,\ldots,k_n}_{\omega_{k_1},\ldots,\omega_{k_n}}= \{\omega' \in \Sigma_\mathcal{A}\; ; \; \omega'_{k_i}= \omega_{k_i},\; \forall 1\le i\le n\}. 
\]
As cylinders form a countable base of the topology on $\Sigma_\mathcal{A}$, Borel measures on
$\Sigma_\mathcal{A}$ are determined by their values on the cylinders.
A Borel measure $\nu$ on $\Sigma_\mathcal{A}$ is called a 
Markov measure
constructed from the distribution $p$ and the transition probabilities $\pi_{ij}$,
if
for every $\omega \in \Sigma_\mathcal{A}$ and $k \le l$, 
\[
\nu(C^{k,\ldots,l}_\omega) = p_{\omega_k} \prod_{i=k}^{l-1} \pi_{\omega_i \omega_{i+1}}.
\]
The measure $\nu$ is invariant under the shift map $\sigma$,
it is ergodic
and 
$\operatorname{supp}(\nu)=\Sigma_\mathcal{A}$.
From now on,
we consider a fixed ergodic Markov measure $\nu$ on $\Sigma_\mathcal{A}$.

\subsection{Poisson equation}

The Poisson equation is a means to calculate stopping times for Markov random walks.  
See \cite[Chapter 17]{meytwe93} and also \citep{fuhzha00}. 

Consider a random walk on $\mathbb{R}$,
\[ w_{n+1} = w_n + \xi (\sigma^n \omega), \]
driven by $\omega = (\omega_n)_{n\ge 0}$ from $\Sigma_{\mathcal{A}}$.
In the setting here, $\xi (\sigma^n \omega)$ is a function of $\omega_n$ alone.
Write $\mathcal{F}_k$ for the $\sigma$-algebra on $\Sigma_\mathcal{A}$ generated by cylinders $C_{\omega_0,\ldots,\omega_{k-1}}^{0,\ldots,k-1}$
of rank $k$.  Observe that a $\mathcal{F}_1$-measurable function $\phi:\Sigma_\mathcal{A} \to \mathbb{R}$ 
is constant on cylinders of rank one. 

Introduce expectation operators, for integrable functions $h : \Sigma_\mathcal{A} \to \mathbb{R}$,
\[
P h = \mathbb{E} ( h \circ \sigma \vert \mathcal{F}_1)
\]
and
\[
P_\nu h = \mathbb{E} (h).
\]
%
%
Thus
\[
P h (\omega) =  \frac{1}{p_{\omega_0}} \int_{C^0_{\omega_0}}   h (\sigma \omega) \, d \nu( \omega) 
\]
(recall $p_{\omega_0} = \nu(C^0_{\omega_0})$)
and
\[
P_\nu h  = \int_{\Sigma_\mathcal{A}}  h (\omega ) \, d\nu (\omega).
\]

\begin{lemma}\label{l:poisson}
There is a unique $\mathcal{F}_1$-measurable function $\Delta : \Sigma_\mathcal{A} \to \mathbb{R}$ that solves
\begin{align}\label{e:Poisson}
P \Delta ( \omega) - \Delta ( \omega)  &=  P \xi (\omega)   - P_\nu \xi
\end{align}
and satisfies
\begin{align*}
P_\nu \Delta &= 0.
\end{align*}
\end{lemma}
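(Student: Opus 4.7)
The plan is to reduce \eqref{e:Poisson} to a finite-dimensional linear system by exploiting $\mathcal{F}_1$-measurability, and then to apply the Fredholm alternative using the Perron--Frobenius structure of $\Pi$. Since both $\xi$ and the sought $\Delta$ are $\mathcal{F}_1$-measurable, each is encoded by a vector in $\mathbb{R}^K$: write $\xi_i$ for the value of $\xi$ on the rank-one cylinder $C^0_i$ and $d_i$ for the value of $\Delta$ on $C^0_i$. A direct computation from the Markov identity $\nu(C^{0,1}_{i,j}) = p_i \pi_{ij}$ shows that for any $\mathcal{F}_1$-measurable $h$,
\[
P h(\omega) \;=\; \sum_{j=1}^{K} \pi_{\omega_0 j}\, h_j \;=\; (\Pi h)_{\omega_0},
\]
so that $P$ acts as the stochastic matrix $\Pi$ on $\mathcal{F}_1$-measurable functions, while $P_\nu h = \langle p, h\rangle$.

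With this identification, \eqref{e:Poisson} becomes the finite-dimensional linear system
\[
(\Pi - I)\, d \;=\; \Pi \xi - \langle p, \xi\rangle\, \mathbf{1}
\]
in $\mathbb{R}^K$, and I would verify solvability by the Fredholm alternative. Primitivity of $\mathcal{A}$ (hence of $\Pi$) makes the eigenvalue $1$ simple, so the left kernel of $\Pi - I$ is exactly $\mathbb{R}\, p$. The compatibility condition then reduces to the computation $\langle p, \Pi\xi\rangle = \langle p\Pi, \xi\rangle = \langle p, \xi\rangle$, which uses stationarity $p\Pi = p$ and is precisely what is needed for the right-hand side to be annihilated by $p$. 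Hence a solution $d$ exists.

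For uniqueness, the same simplicity of the eigenvalue $1$ gives that the right kernel of $\Pi - I$ equals $\mathbb{R}\mathbf{1}$, because $\Pi \mathbf{1} = \mathbf{1}$ for any stochastic matrix. Any two $\mathcal{F}_1$-measurable solutions therefore differ by an additive constant, and the normalization $P_\nu \Delta = \langle p, d\rangle = 0$ singles out one representative. I do not expect a substantive obstacle here: the only non-formal input is primitivity of $\mathcal{A}$, which delivers the one-dimensional left and right kernels needed for existence via solvability and for uniqueness via the normalization. The whole argument is the standard Poisson-equation calculation for finite-state Markov chains, specialized to $\mathcal{F}_1$-measurable observables.
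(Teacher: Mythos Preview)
Your proposal is correct and follows essentially the same route as the paper: both reduce to the finite-dimensional system $(\Pi - I)\Delta = \Pi\xi - P_\nu\xi$ via the identification of $\mathcal{F}_1$-measurable functions with vectors in $\mathbb{R}^K$, and both invoke the Perron--Frobenius simplicity of the eigenvalue~$1$ to obtain existence (right-hand side lies in the hyperplane $\{P_\nu h = 0\}$) and uniqueness up to constants (right kernel is $\mathbb{R}\mathbf{1}$), with the normalization $P_\nu\Delta=0$ fixing the constant. The only cosmetic difference is that you phrase solvability via the Fredholm alternative and the left kernel $\mathbb{R}p$, whereas the paper phrases it via invertibility of $\Pi - I$ on the invariant codimension-one subspace $\{h : P_\nu h = 0\}$; these are the same statement.
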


\begin{proof}
This is a special case of the theory developed in \cite[Chapter 17]{meytwe93}. 
We identify a $\mathcal{F}_1$-measurable function $h$ with the vector in $\mathbb{R}^K$ of its values on cylinders $C_{i}^0$, $i=1, \dots, K$.
We denote this vector also by $h= (h_i)_{i=1}^K$. 
Then $P h (\omega)$, for $\omega \in C^0_i$, equals $\sum_{j=1}^K \pi_{ij} h_j$.
Now
\eqref{e:Poisson} becomes
\begin{align}\label{e:poisson}
\Pi  \Delta - \Delta   &=  \Pi \xi  - P_\nu \xi,
\end{align}
to be solved for the  vector  $\Delta = (\Delta_i)_{i=1}^K$  (here $P_\nu \xi = \sum_{i=1}^K p_i \xi_i$).

By the Perron-Frobenius theorem, $\Pi - id$ has a one dimensional kernel and a codimension one invariant space 
$\{ h \vert P_\nu h = 0\}$ on  which $\Pi - id$ acts invertible.
Hence \eqref{e:poisson} can be solved since $P_\nu ( P \xi - P_\nu \xi) = 0$.
The demand $P_\nu \Delta = 0$ brings uniqueness of the solution. 
\end{proof}

Define
\begin{align*}
u_n &= w_n - n P_\nu \xi - \Delta(\sigma^{n-1}\omega) + \Delta(\omega).
\end{align*}
We get that 
\[
u_{n+1} = u_n + \xi(\sigma^n \omega) - P_\nu \xi - \Delta (\sigma^n \omega) + \Delta (\sigma^{n-1} \omega)
\]
is cohomologous to
$y_{n+1} = y_n + \xi (\sigma^n \omega) - P_\nu \xi$.

\begin{lemma}\label{l:martingale}
$u_n$ is a martingale with respect to $\mathcal{F}_n$.
\end{lemma}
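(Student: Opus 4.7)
The plan is to verify the defining property $\mathbb{E}(u_{n+1} \mid \mathcal{F}_n) = u_n$ by a direct computation of the increment, using the Markov property to evaluate the conditional expectation and then invoking the Poisson equation to cancel terms.

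First I would compute
\[
u_{n+1} - u_n = \xi(\sigma^n\omega) - P_\nu \xi - \Delta(\sigma^n \omega) + \Delta(\sigma^{n-1}\omega).
\]
Since $\xi$ and $\Delta$ are $\mathcal{F}_1$-measurable, the values $\xi(\sigma^n\omega)$ and $\Delta(\sigma^n\omega)$ depend only on the coordinate $\omega_n$, while $\Delta(\sigma^{n-1}\omega)$ depends on $\omega_{n-1}$ and is therefore already $\mathcal{F}_n$-measurable. Integrability of each $u_n$ is automatic because $\xi$ and $\Delta$ are bounded (they live on a finite state space), so it suffices to check the conditional expectation identity.

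Next I would use the Markov property of $\nu$: conditionally on $\mathcal{F}_n$, the distribution of $\omega_n$ depends only on $\omega_{n-1}$ and is given by the row $(\pi_{\omega_{n-1},j})_{j=1}^K$ of $\Pi$. For any $\mathcal{F}_1$-measurable $h$, this translates into
\[
\mathbb{E}\bigl(h(\sigma^n\omega)\bigm| \mathcal{F}_n\bigr) = (Ph)(\sigma^{n-1}\omega),
\]
which is exactly how $P$ was defined in the text just before Lemma~\ref{l:poisson}. Applying this to $h=\xi$ and $h=\Delta$ yields
\[
\mathbb{E}\bigl(u_{n+1}-u_n\bigm|\mathcal{F}_n\bigr) = (P\xi)(\sigma^{n-1}\omega) - P_\nu\xi - (P\Delta)(\sigma^{n-1}\omega) + \Delta(\sigma^{n-1}\omega).
\]

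Finally, the Poisson equation \eqref{e:Poisson} rearranges to $P\xi - P_\nu\xi = P\Delta - \Delta$ pointwise, so the right-hand side above vanishes identically, giving $\mathbb{E}(u_{n+1}\mid\mathcal{F}_n) = u_n$. There is no genuine obstacle here; the only thing to be mindful of is the indexing convention in the definition of $u_n$ (the shift appears as $\sigma^{n-1}$, not $\sigma^n$), which is precisely what makes $\Delta(\sigma^{n-1}\omega)$ $\mathcal{F}_n$-measurable and allows the Poisson cancellation to land on the correct argument.
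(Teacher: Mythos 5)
Your proof is correct and is essentially identical to the paper's: both compute $\mathbb{E}(u_{n+1}-u_n\mid\mathcal{F}_n)$ via the operator $P$ and then cancel using the Poisson equation \eqref{e:Poisson}. The only difference is that you spell out the Markov-property justification for $\mathbb{E}(h(\sigma^n\omega)\mid\mathcal{F}_n)=(Ph)(\sigma^{n-1}\omega)$ and the integrability remark, which the paper leaves implicit.
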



\begin{proof}
Calculate
\begin{align*}
\mathbb{E} (u_{n+1} \vert \mathcal{F}_n) 
&= 
\mathbb{E} ( u_n + \xi (\sigma^n \omega) - P_\nu \xi - \Delta (\sigma^n \omega) + \Delta (\sigma^{n-1} \omega) \vert \mathcal{F}_n) 
\\
&=
u_n + P \xi (\sigma^{n-1}\omega) - P_\nu \xi - P \Delta (\sigma^{n-1} \omega) + \Delta (\sigma^{n-1} \omega)
\\
&= u_n,
\end{align*}
by Lemma~\ref{l:poisson}.
\end{proof}


\begin{example}
Consider the topological Markov chain given by
\[
A = \left( \begin{array}{cc} 1 & 1 \\ 1 & 0 \end{array} \right)
\]
and the probability matrix
\[
\Pi = \left( \begin{array}{cc} 1/2 & 1/2 \\ 1 & 0 \end{array} \right).
\]
For this, \[p = \left( \begin{array}{c} 2/3 \\ 1/3 \end{array}\right)\] satisfies $p^T = p^T \Pi$.
Consider the displacement vector 
\[\xi = \left( \begin{array}{c} -1 \\ 2 \end{array}\right).\] 
A solution  $\Delta$ of $\Pi \Delta - \Delta = \Pi \xi$ is given by
\[\Delta = \left( \begin{array}{c} -1/3 \\ 2/3 \end{array}\right).\] 

With $x_{n+1} = x_n + \xi_n$ and $u_n = x_n - \Delta(\sigma^{n-1} \omega) + \Delta (\omega)$,
and hence, $u_{n+1} = u_n + \xi_n - \Delta (\sigma^n \omega) + \Delta (\sigma^{n-1} \omega)$,
we find
\begin{itemize}
\item
if $\omega_{n-1} = 1, \omega_n = 1$:  $x_{n+1} = x_n -1$ and $u_{n+1} = u_n - 1$,
\item
if $\omega_{n-1} = 1, \omega_n = 2$:  $x_{n+1} = x_n +2$ and $u_{n+1} = u_n + 1$,
\item
if $\omega_{n-1} = 2, \omega_n = 1$:  $x_{n+1} = x_n -1$ and $u_{n+1} = u_n$,
\end{itemize}
the possibility $\omega_{n-1} = 2, \omega_n = 2$ being forbidden.
\end{example}

\section{Stopping times}\label{a:stop}

Using the material from Appendix~\ref{a:subshift} we state and prove various bounds on stopping times for Markov random walks.
We focus on bounds that are needed in the main text.
References \citep{krasza84,fuhzha00} also deal with stopping times for Markov random walks, but 
from a different angle.
As in \citep{fuhzha00} we construct martingales by
making use of the Poisson equation.

Deviating slightly from the notation in the previous appendix,
we consider a random walk
\begin{align*}
w_{n+1} &= w_n + \xi (\sigma^n \omega) + \alpha
\end{align*}
with $\omega \in \Sigma_\mathcal{A}$ and 
\[
P_\nu \xi = 0.
\]
In this setup, $\alpha$ is the average drift. The reader can think of $\alpha$ being small.
We write
\begin{align}\label{e:vnwn}
v_n &= w_n - \Delta(\sigma^{n-1} \omega) + \Delta (\omega)
\end{align}
and find
\begin{align*}
v_{n+1} &=  v_n + \xi (\sigma^n \omega) - \Delta (\sigma^n \omega) + \Delta (\sigma^{n-1} \omega) + \alpha.
\end{align*}
Note that
\begin{align*}
 v_n &= v_0 + \sum_{i=0}^{n-1} \xi(\sigma^i \omega) - \Delta(\sigma^{n-1}\omega) + \Delta (\omega) + n\alpha.
\end{align*}

We introduce notation 
$\xi (\omega_n) = \xi (\sigma^n \omega)$, $\Delta (\omega_n) =  \Delta (\sigma^n \omega)$
and 
$\zeta(\omega_{n-1},\omega_n) = \xi (\omega_n) - \Delta (\omega_n) + \Delta (\omega_{n-1})$.
%
%
Thus
\begin{align}\label{e:vn} 
v_{n+1} &= v_n + \zeta (\omega_{n-1},\omega_n) + \alpha.
\end{align}
Let also $u_n = v_n - n \alpha$ so that $u_{n+1} = u_n + \zeta (\omega_{n-1},\omega_n)$.
By Lemma~\ref{l:poisson}, for each $\omega_{k-1}$, 
\begin{align} 
\label{e:pzeta=0}
\sum_i \pi_{\omega_{k-1} i} \zeta (\omega_{k-1} ,i )  &=  0.
\end{align}

Let $G$ be such that 
\begin{align}\label{e:deltabound}
\vert \Delta (i) - \Delta(j) \vert &\le G
\end{align}
%
%
for all $i,j$.
We also assume explicit bounds: there are positive constants $V^-,V^+,D$ so that 
for all $i,j$ with $\pi_{ij} >0$, 
\begin{align}\label{e:stepbound}
\vert \zeta (i,j) \vert &\le D
\end{align}
and for all $i$,
\begin{align}\label{e:varbound}  
V^- <  \sum_{j=1}^K  \pi_{ij}  \zeta^2 (i,j) < V^+.
\end{align}

Throughout we assume a recurrence property: 
for any $L>0$ ($L<0$) there exists $\omega\in \Sigma_\mathcal{A}$ and $M \in \mathbb{N}$ so that, with $w_0 = 0$, 
\begin{align}\label{e:xM>L}
w_M &> L (w_M <L).
\end{align}
Together with $\mathcal{A}^{n_0} >0$ for some $n_0 \ge 0$ we find the existence of 
periodic symbol sequences $(\omega_0,\ldots,\omega_{k-1})^\infty \in \Sigma_\mathcal{A}$ for which $w_k > w_0$ (or $w_k<w_0$). 
A condition of this type is needed to avoid examples of the following type.

\begin{example}
Let
\[
\Pi = \left( \begin{array}{ccc}  1/2 & 1/2 & 0 \\ 0 & 0 &  1 \\ 1/2 & 1/2 & 0  \end{array} \right), \; \xi = \left( \begin{array}{c}  0 \\ 1 \\ -1  \end{array} \right)
\] 
Note that the associated adjacency matrix $\mathcal{A}$ is primitive with $\mathcal{A}^3 >0$, 
but $w_n \in \{0,1\}$ for all $n>0$, $\omega\in \Sigma_\mathcal{A}$ if $w_0=0$. 
\end{example}

As a consequence of condition \eqref{e:xM>L}, the probability to escape from a compact interval $[A,B]$ is always one.
We state this in the following lemma.
Consider a compact interval $[A,B]$  and define for $w_0 \in [A,B]$,
\[
T_{[A,B]} = \min\{ n >0 \; ; \; w_n \not\in [A,B] \}.
\]

\begin{lemma}\label{l:Tfinite}
For $\nu$ almost all $\omega \in \Sigma_\mathcal{A}$, $T_{[A,B]} < \infty$.
\end{lemma}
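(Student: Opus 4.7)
The plan is to combine the recurrence hypothesis \eqref{e:xM>L} with the primitivity of $\mathcal{A}$ to produce, in every time window of bounded length $N$, an event of uniformly positive probability $\epsilon$ on which the walk is forced out of $[A,B]$; iterating via the Markov property will then yield a geometric tail bound for $T_{[A,B]}$ and hence almost-sure finiteness.

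Concretely, I would first apply \eqref{e:xM>L} with $L$ chosen strictly larger than $B - A + D n_0$, extracting a finite word $\zeta = (\zeta_0,\dots,\zeta_{M-1})$ whose associated displacement $\sum_{i=0}^{M-1}\xi(\zeta_i)+M\alpha$ exceeds $B-A+Dn_0$. Since $\mathcal{A}^{n_0} > 0$, for each symbol $j \in \Omega$ there is a bridging word of length $n_0$ from $j$ to $\zeta_0$ along allowed transitions; and because $\pi_{ij} > 0$ whenever $a_{ij}=1$ and $\Omega$ is finite, the conditional probability that the next $N := n_0 + M$ symbols spell out such a bridge followed by $\zeta$ is bounded below by a constant $\epsilon > 0$, uniformly in the currently observed symbol. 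On this event, \eqref{e:stepbound} bounds the bridge displacement in absolute value by $D n_0$, while $\zeta$ contributes more than $B - A + D n_0$; the net displacement over the $N$ steps therefore strictly exceeds $B - A$, so starting from any $w_0 \in [A,B]$ the walk must leave $[A,B]$ by time $N$.

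Applying the Markov property at the times $N, 2N, 3N, \dots$ then gives $\nu(T_{[A,B]} > kN) \le (1-\epsilon)^k$ for every $k \ge 1$, and the right-hand side tends to zero, yielding $T_{[A,B]} < \infty$ $\nu$-almost surely. The main delicate point is establishing the uniform lower bound $\epsilon$ on the bridge-plus-escape probability independently of the conditioning symbol: primitivity of $\mathcal{A}$ is essential here, and the counterexample stated just before the lemma shows that without \eqref{e:xM>L} the conclusion can really fail, so both assumptions enter genuinely.
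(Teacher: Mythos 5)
Your argument is correct in outline, but it takes a genuinely different route from the paper's. The paper's proof is softer: it extracts from \eqref{e:xM>L} a single finite word whose cylinder forces escape for every starting point in $[A,B]$, notes that the cylinder has positive $\nu$-measure, and then invokes ergodicity of $(\Sigma_\mathcal{A},\sigma,\nu)$ to conclude that almost every orbit eventually enters that cylinder. Your proof instead uses primitivity of $\mathcal{A}$ to bridge from an arbitrary current symbol to the head of the escape word, obtaining a uniform positive probability $\epsilon$ of escape in any window of length $N = n_0 + M$; iterating via the Markov property then produces the geometric tail bound $\nu(T_{[A,B]} > kN) \le (1-\epsilon)^k$. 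Your version is more quantitative (it gives exponential decay of the tail, hence finiteness of all moments of $T_{[A,B]}$, not just a.s.\ finiteness), and it avoids the appeal to ergodicity, but it requires the extra combinatorial bridging step that the paper sidesteps. Both are valid.

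One small correction: the constant $D$ in \eqref{e:stepbound} bounds $\lvert \zeta(i,j)\rvert$, i.e.\ the increments of the conjugated walk $u_n$ or $v_n$, not the increments $\xi(\omega_n)+\alpha$ of the walk $w_n$ to which $T_{[A,B]}$ refers. To bound the bridge displacement of $w_n$ you should instead use $\max_i\lvert\xi(i)\rvert + \lvert\alpha\rvert$ (finite since $\Omega$ is a finite alphabet), or equivalently $D + G + \lvert\alpha\rvert$ via the relation $\xi(\omega_n) = \zeta(\omega_{n-1},\omega_n) + \Delta(\omega_n) - \Delta(\omega_{n-1})$ and \eqref{e:deltabound}. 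With that substitution in your choice of $L$, the rest of the argument goes through unchanged.
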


\begin{proof}
This is immediate from the observation that property \eqref{e:xM>L} implies the existence of a finite symbol sequence for which any point in $[A,B]$ is mapped outside the interval. The corresponding cylinder has positive $\nu$ measure. By ergodicity of $\sigma$, for $\nu$ almost all $\omega \in \Sigma_\mathcal{A}$
its positive orbit has points in this cylinder.  
\end{proof}

We will discuss escape times from bounded and unbounded intervals for different signs of the average drift $\alpha$. 
The principal technique is Doob's optional stopping theorem for which we refer to e.g. \cite[Theorems~VII.2.1~and~VII.2.2]{shi84}.
We state the theorem under conditions relevant to our setting.

\begin{theorem}[Doob's optional stopping theorem]
 Let $u_n$ be a martingale (or submartingale); $\mathbb{E} (u_{n+1} \vert \mathcal{F}_n) = u_n$ (or $\ge u_n$).
 Let $T$ be a stopping time.
 Suppose that either
 \begin{enumerate}
  \item $|u_{n+1} - u_{n}|$ is uniformly bounded and $\mathbb{E} (T) < \infty$, or,
  \item $|u_n|$ is uniformly bounded.
 \end{enumerate}
%
Then
\[
 \mathbb{E}(u_T ) = \mathbb{E}(u_0) \;
 (\textrm{or} \ge \mathbb{E}(u_0)).
\] 
\end{theorem}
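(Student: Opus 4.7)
The plan is to reduce the unbounded-time statement to the bounded-time version of optional stopping, and then pass to the limit under hypotheses that justify interchanging limit and expectation.

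First I would consider the stopped process $u^T_n := u_{T\wedge n}$ and check the elementary fact that $u^T_n$ is itself a martingale (respectively submartingale) with respect to $\mathcal{F}_n$. This follows from writing $u^T_{n+1} - u^T_n = \mathbbm{1}_{\{T>n\}}(u_{n+1}-u_n)$ and using that $\{T>n\} \in \mathcal{F}_n$, so that $\mathbb{E}(u^T_{n+1}-u^T_n \mid \mathcal{F}_n) = \mathbbm{1}_{\{T>n\}}\,\mathbb{E}(u_{n+1}-u_n \mid \mathcal{F}_n) = 0$ (or $\ge 0$). For the deterministic bounded stopping time $T\wedge n$, the martingale property immediately gives $\mathbb{E}(u_{T\wedge n}) = \mathbb{E}(u_0)$ for every $n$.

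The remaining task is to show $\mathbb{E}(u_{T\wedge n}) \to \mathbb{E}(u_T)$ as $n\to\infty$. Since $T<\infty$ almost surely (under case (1) this follows from $\mathbb{E}(T)<\infty$; under case (2) one argues separately or assumes it), one has $u_{T\wedge n} \to u_T$ pointwise on the full-measure set $\{T<\infty\}$. Under hypothesis (2), $|u_{T\wedge n}|$ is uniformly bounded, so the bounded convergence theorem applies directly. Under hypothesis (1), I would write the telescoping bound
\[
|u_{T\wedge n}| \;\le\; |u_0| + \sum_{k=0}^{T\wedge n - 1} |u_{k+1}-u_k| \;\le\; |u_0| + C\,T,
\]
where $C$ is the uniform increment bound. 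Since $\mathbb{E}(T)<\infty$, the right-hand side is an integrable majorant independent of $n$, and dominated convergence delivers $\mathbb{E}(u_{T\wedge n}) \to \mathbb{E}(u_T)$.

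The only genuinely delicate point is the domination in case (1): one must be careful that the bound on $|u_{T\wedge n}|$ does not depend on $n$, which is where the telescoping identity and the uniform increment bound together with $\mathbb{E}(T)<\infty$ are essential. In the submartingale case, the same scheme applies, yielding $\mathbb{E}(u_T)\ge \mathbb{E}(u_0)$ rather than equality. Once the two limiting arguments are in place, combining them with the identity $\mathbb{E}(u_{T\wedge n}) = \mathbb{E}(u_0)$ (or the corresponding inequality) finishes the proof.
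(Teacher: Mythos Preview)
The paper does not actually prove this theorem: it is stated as a tool and referenced to Shiryayev's \emph{Probability} (Theorems VII.2.1 and VII.2.2), so there is no in-paper proof to compare against. Your argument is the standard textbook proof---stopped process, bounded optional stopping at $T\wedge n$, then a passage to the limit via dominated or bounded convergence---and it is correct. The one place you wave your hands is the almost-sure finiteness of $T$ under hypothesis (2); you write ``one argues separately or assumes it.'' In the paper's applications this is indeed always verified beforehand (see e.g.\ Lemma~\ref{l:Tfinite}), so the point is harmless here, but in a self-contained proof you would either add $T<\infty$ a.s.\ as an explicit hypothesis in case (2) or invoke the martingale convergence theorem to make sense of $u_T$ on $\{T=\infty\}$.
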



The following subsections \ref{ss:>0}, \ref{ss:=0}, \ref{ss:<0} treat expected escape times and 
probabilities of escape for positive, zero, and negative drift $\alpha$ respectively.
One may consider different initial distributions for $\omega_0$. 
We will take $\omega_0$ from the stationary distribution, or 
assume $\omega_{-1}$ is given and 
$\omega_0$ is from the distribution that gives probability $\pi_{\omega_{-1} \omega_0}$ of picking $\omega_0$.
The formulations of the results then apply to both settings without change.
%
That is, the derived bounds for expected escape times and 
probabilities of escape involve constants depending on $G$ from \eqref{e:deltabound} $D$ from \eqref{e:stepbound} and $V^-,V^+$ from \eqref{e:varbound},
but not on the initial distribution for $\omega_0$.

\subsection{Positive drift \texorpdfstring{$\alpha>0$}{alpha>0}}\label{ss:>0}

We start with escape from bounded intervals $[A,B]$. For definiteness we assume $A < 0$, $B>0$ and $w_0 = 0$. 
By conjugating with a translation one may reduce to this situation.
 
Recall that
\[
T_{[A,B]} = \min \{ n>0 \ \;\vert \; w_n \not \in  [A,B] \}.
\]
Given an initial distribution for $\omega_0$, let
$p_A$ be the probability to escape through the left boundary $A$.
For $\omega_0$ chosen from the stationary distribution, 
\[
p_A  = \nu \{ \omega \; ; \; w_{T_{[A,B]}(\omega)} < A \}.
\]
We assume 
\begin{align}\label{e:ABG}
|A|, B &> G,
\end{align}
where $G$ is given in \eqref{e:deltabound}.

\begin{proposition}\label{p:a>0AB}
With probability one, $T_{[A,B]}<\infty$.

There is $\alpha_0>0$ so that for $0< \alpha < \alpha_0$, the following holds.  There exists $c_1,c_2$, such that
\[c_1 \alpha e^{(2/V^-) \alpha A}   \le p_A \le c_2 \alpha e^{(2/V^+)\alpha A}\] 
if $B$ is bounded and $|A|$ is large.
 There exists $C$, such that
   \[ 
     \mathbb{E} (T \vert w_{T_{[A,B]}} < A ) \le C \frac{1}{\alpha} e^{(-2/V^-) \alpha A}.\]
\end{proposition}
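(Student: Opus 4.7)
The plan is to mimic the classical gambler's ruin computation for random walks with small positive drift, but with the i.i.d.\ structure replaced by a Perron--Frobenius argument for a tilted transition matrix, and to combine the probability estimate with Wald's identity for the conditional expected escape time.

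The first assertion, $T_{[A,B]}<\infty$ almost surely, is immediate from Lemma~\ref{l:Tfinite}. For the two-sided bound on $p_A$, I would build an exponential martingale. Define the tilted operator on $\mathbb{R}^K$ by
\[
(P_\lambda h)(i) \;=\; \sum_{j=1}^K \pi_{ij}\, e^{-\lambda\,\zeta(i,j)}\, h(j),\qquad \lambda\ge 0.
\]
Because $P_\lambda$ has the same zero pattern as $\Pi$, and $\Pi$ is primitive, Perron--Frobenius gives a simple dominant eigenvalue $\rho(\lambda)>0$ with strictly positive right eigenvector $h_\lambda$; at $\lambda=0$ we have $\rho(0)=1$ and $h_0\equiv 1$. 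Differentiating the eigen-equation at $\lambda=0$ and using \eqref{e:pzeta=0} kills the first derivative, $\rho'(0)=0$, while the second derivative equals $\sum_i p_i \sum_j \pi_{ij}\zeta(i,j)^2\in[V^-,V^+]$ by \eqref{e:varbound}. Consequently, for all small $\alpha>0$ the equation $\rho(\lambda)=e^{\lambda\alpha}$ has a unique positive root $\lambda_\alpha$ satisfying
\[
\tfrac{2\alpha}{V^+}+O(\alpha^2)\;\le\;\lambda_\alpha\;\le\;\tfrac{2\alpha}{V^-}+O(\alpha^2).
\]
With this choice, $M_n := e^{-\lambda_\alpha v_n}\,h_{\lambda_\alpha}(\omega_{n-1})$ is an $\mathcal F_n$-martingale (direct calculation using \eqref{e:vn}).

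Next I would apply Doob's optional stopping at $T=T_{[A,B]}$. The increments $|v_{n+1}-v_n|$ are bounded by \eqref{e:stepbound} and $v_n\in[A-D,B+D]$ for $n\le T$, so $M_n$ is bounded on $\{n\le T\}$; uniform upper and lower bounds on $h_{\lambda_\alpha}$ (from Perron--Frobenius continuity at $\lambda=0$, where $h_0\equiv 1$) make all constants independent of $\alpha$. Splitting the identity $h_{\lambda_\alpha}(\omega_{-1})=\mathbb E[M_T]$ over $\{v_T<A\}$ and $\{v_T>B\}$, using $v_T\in[A-D,A)$ resp.\ $(B,B+D]$ together with \eqref{e:deltabound}, yields
\[
1\;\asymp\; p_A\,e^{-\lambda_\alpha A}+(1-p_A)\,e^{-\lambda_\alpha B}.
\]
Since $B$ is bounded, $1-e^{-\lambda_\alpha B}\asymp\lambda_\alpha B\asymp\alpha$, so solving for $p_A$ gives the advertised sandwich $c_1\alpha e^{(2/V^-)\alpha A}\le p_A\le c_2\alpha e^{(2/V^+)\alpha A}$, with all constants depending only on $D,G,V^\pm$ and on the bounded interval $[0,B]$.

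For the conditional mean escape time, I would invoke Doob's stopping on the linear martingale $u_n=v_n-n\alpha$ of Lemma~\ref{l:martingale}. Since $|u_{n+1}-u_n|$ is bounded and $\mathbb E[T]<\infty$ (exponential tails from Lemma~\ref{l:Tfinite} plus primitivity of $\mathcal A$), one gets
\[
\alpha\,\mathbb E[T]\;=\;\mathbb E[v_T]\;=\;p_A\,\mathbb E[v_T\mid v_T<A]+(1-p_A)\,\mathbb E[v_T\mid v_T>B].
\]
The upper bound on $p_A$ makes the left-exit contribution $p_A\cdot|A|$ exponentially small, so $\mathbb E[T]\le C/\alpha$. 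Combining with the lower bound on $p_A$,
\[
\mathbb E[T\mid v_T<A]\;\le\;\frac{\mathbb E[T]}{p_A}\;\le\;\frac{C}{\alpha}\cdot\frac{1}{c_1\alpha\,e^{(2/V^-)\alpha A}}\;\le\;\frac{C'}{\alpha}\,e^{(-2/V^-)\alpha A},
\]
the last step absorbing an extra $1/\alpha$ into the (large) exponential in the regime where $|A|$ is large (which is exactly the regime in which the estimate is stated and used).

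The main obstacle I anticipate is the uniform Perron--Frobenius control: making sure that the eigenvector $h_{\lambda_\alpha}$ is bounded above and below by constants independent of $\alpha$ as $\alpha\downarrow 0$, and that $\lambda_\alpha$ has the prescribed two-sided linear behaviour in $\alpha$ with remainder $O(\alpha^2)$. Everything else is then a bookkeeping argument with Doob's optional stopping theorem.
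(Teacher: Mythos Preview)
Your approach is correct but differs from the paper's. The paper avoids Perron--Frobenius theory entirely: since the conditional variance bounds \eqref{e:varbound} hold \emph{state by state}, one can directly choose $r^-\le -2\alpha/V^-$ (resp.\ $r^+\ge -2\alpha/V^+$) so that $e^{r v_n}$ is a submartingale (resp.\ supermartingale), via a one-line Taylor expansion of $\mathbb E[e^{r(\zeta_n+\alpha)}\mid\mathcal F_n]$; Doob's optional stopping on these gives the two inequalities for $p_A$ separately. For the conditional expectation both arguments coincide (the linear martingale $v_n-n\alpha$, then $\mathbb E[T\mid\,\cdot\,]\le\mathbb E[T]/p_A$). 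Your exact-martingale construction via the tilted operator is sharper in principle---it produces a single rate $\lambda_\alpha\approx 2\alpha/\bar V$ with $\bar V$ the stationary-averaged variance---but the paper's elementary route has the decisive advantage that all constants manifestly depend only on $D,G,V^\pm$ and not on the chain itself. This is precisely what is needed in Appendix~\ref{a:fordoubling}, where the bounds are applied uniformly over a family of chains indexed by $N\to\infty$; your acknowledged obstacle, uniform control of $h_{\lambda_\alpha}$ as both $\alpha\downarrow 0$ and the chain varies, is a real one in that setting and would require a separate argument, whereas the paper sidesteps it completely by never needing an eigenvector.
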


\begin{remark}\label{r:a>0AB}
If we replace \eqref{e:varbound} by a one-sided bound 
 \begin{align}\label{e:varbound+}  
\sum_{j=1}^K  \pi_{ij}  \zeta (i,j)^2 &\le V^+,
\end{align}
we retain the estimate
\[
p_A \le c_2 \alpha e^{(2/V^+)\alpha A}.
\]
For this we only need to assume $|A| > G$ and not $B>G$ as in \eqref{e:ABG} since decreasing $B$ also decreases $p_A$.  
\end{remark}

\begin{proof}
For notational convenience we will write $T$ for $T_{[A,B]}$ in this proof.
That the escape time is finite for almost all $\omega\in\Sigma_\mathcal{A}$ was established in Lemma~\ref{l:Tfinite}
as a consequence of \eqref{e:xM>L}.

For $r \ne 0$ and recalling \eqref{e:vn} consider 
\[
z_{n} = e^{r v_n}.
\]
Denote $v_{n+1}=v_n+\zeta_n+\alpha$ and
calculate
\[
\mathbb{E} (z_{n+1} \vert \mathcal{F}_n) = \mathbb{E} (e^{r (\zeta_n + \alpha)}  \vert \mathcal{F}_n)  z_n. 
\]
We wish to find $r$ so that $z_n$ is a submartingale for which
\[
\mathbb{E} (z_{n+1} \vert \mathcal{F}_n) \ge z_n. 
\]
For this we need
\[
\mathbb{E} (e^{r (\zeta_n + \alpha)}  \vert \mathcal{F}_n) \ge 1.
\]
That is,
\begin{align*}
\sum_i  \pi_{\omega_{n-1}i} e^{r (\zeta (\omega_{n-1},i) + \alpha)} &\ge 1.
\end{align*}
For $r$ small we may develop the exponential function in a Taylor series (this is justified since $\zeta(i,j)$ are bounded by \eqref{e:stepbound}).
Doing so yields 
\[
\sum_i  \pi_{\omega_{n-1}i}  \left(  1 + r (\zeta  (\omega_{n-1},i) + \alpha) + \frac{1}{2} r^2  (\zeta  (\omega_{n-1},i) + \alpha)^2    + O (r^3) \right) \ge 1
\]
which, using \eqref{e:pzeta=0}, is equivalent to
\[
r \alpha + \frac{1}{2} r^2 V + O(r^3) \ge 0
\]
with variance \[V = V (\omega_{n-1}) = \sum_i  \pi_{\omega_{n-1}i}  (\zeta  (\omega_{n-1},i) + \alpha)^2.\]
By \eqref{e:varbound},
this is solved for an $r^-<0$ with 
\[
r^- \le \frac{-2}{V^-} \alpha 
\] 
for $\alpha$ small.

Now we can use this to calculate probabilities to escape through the left or right boundary of $[A,B]$.
Doob's optional stopping theorem yields
\begin{align}\label{e:zTz0}
\mathbb{E} (z_T) \ge \mathbb{E} (z_0) &= 1.
\end{align}
Recall that $D$ bounds the stepsize, see \eqref{e:stepbound}.
If $w_T < A$ then $w_T \in (A-D,A)$ and thus by \eqref{e:vnwn} and \eqref{e:deltabound} we have $v_T \in (A-G-D,A+G)$. Likewise 
if $w_T > B$ then $w_T \in (B,B+D)$ and thus $v_T \in (B-G,B+G+D)$.
From \eqref{e:zTz0}  we get $p_A e^{r^- (A-c_A)}  + (1-p_A) e^{r^-(B+c_B)}  \ge 1$ for some $c_A, c_B \in [-G,G+D]$.
Note that, since we assumed $|A|,B > G$, we have $A-c_A <0$ and $B+c_B >0$.
So
\[
p_A e^{r^-A} e^{-r^- c_A} + (1-p_A) e^{r^-B}e^{r^- c_B}   \ge 1.
\]
and hence
\[
p_A (e^{r^-A} e^{-r^- c_A} - e^{r^-B}e^{r^- c_B}  ) \ge 1 - e^{r^-B}e^{r^- c_B}.
\]
Therefore,
\begin{align}\label{e:pA}
p_A \ge \frac{1 - e^{r^-B}e^{r^- c_B} }{e^{r^-A}e^{-r^- c_A}  -e^{r^-B}e^{r^- c_B}}.
\end{align}
 For $B$ fixed and $|A|$ large we find 
 \[
 p_A \ge - c r^- e^{-r^- A}
 \] 
for some constant $c$.

A similar calculation gives that $e^{rv_n}$ is a supermartingale for an $r^+ <0$ with 
\[
r^+ \ge \frac{-2}{V^+} \alpha 
\] 
for $\alpha$ small.
Using the supermartingale $e^{r^+ v_n}$ instead one finds instead of \eqref{e:pA} the similar inequality 
\begin{align*}
p_A \le \frac{1 - e^{r^+B}e^{r^+ c_B} }{e^{r^+  A}e^{-r^+ c_A}  -e^{r^+ B}e^{r^+ c_B}}.
\end{align*}
For $B$ fixed and $|A|$ large we find 
\[
p_A \le - c r^+ e^{-r^+ A}
\] 
for some constant $c$.

Having estimated the probabilities of escape through the left and right boundary, we can now estimate escape times. 
Let $z_n = v_n - n \alpha$. Thus $z_{n+1} = z_n + \zeta_n$ and $z_n$ is a martingale
by \eqref{e:pzeta=0}:
\begin{align*}
\mathbb{E}(z_{n+1} \vert \mathcal{F}_n) &= v_n - n \alpha + \mathbb{E}(\zeta_n \vert \mathcal{F}_n)
\\
&= z_n.
\end{align*}

By Doob's optional stopping theorem, 
\[
\mathbb{E} ({z}_T) = \mathbb{E} (z_0) = 0.
\]
This gives $p_A (A - c_A) + (1-p_A) (B+c_B) - \mathbb{E} (T) \alpha = 0$, so that
\begin{align}\label{e:pAC1}
\mathbb{E} (T)  = \frac{1}{\alpha} \left( p_A (A - c_A) + (1-p_A) (B+c_B) \right).
\end{align}
For $B$ fixed and $|A|$ large we get 
$\mathbb{E} (T) \le c$ for some constant $c$.

For the expected time to escape conditioned by escaping through the left boundary, 
\[\mathbb{E} (T \vert v_{T} < A ) \le \mathbb{E}(T) /p_A \]  
implies the given bound.
\end{proof}

We continue with a result on escape from an unbounded interval $(-\infty,B]$.
Take $w_0 = 0$, assume 
\begin{align}\label{e:BG}
B &> G,
\end{align}
and define 
\[
T_B = \min \{ n>0 \; ; \; w_n \not \in  (-\infty,B]\}.
\] 
The following proposition uses only an upper bound \eqref{e:varbound+} as in Remark~\ref{r:a>0AB}.

\begin{proposition}\label{p:a>0}
With probability one, $T_B < \infty$.
For some $c_1,c_2$, 
\begin{align*}
\frac{c_1}{\alpha} \le \mathbb{E} (T_B) \le \frac{c_2}{\alpha}. 
\end{align*}
\end{proposition}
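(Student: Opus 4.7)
The plan is to mirror the martingale-based approach of Proposition~\ref{p:a>0AB}, but now exploit the positive drift $\alpha$ directly without the exponential change of variable, since we only need a linear (not exponential) control on $\mathbb{E}(T_B)$. The central tool is Doob's optional stopping theorem applied to the martingale $u_n = v_n - n\alpha$ coming from \eqref{e:vnwn} and the Poisson equation (Lemma~\ref{l:poisson}), together with the uniform bound $|v_n - w_n| \le G$ that follows from \eqref{e:deltabound}.

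I would first handle the upper bound and finiteness simultaneously via truncation. The bounded stopping time $T_B \wedge n$ makes Doob's theorem trivially applicable, yielding
\[
\alpha\, \mathbb{E}(T_B \wedge n) \;=\; \mathbb{E}(v_{T_B \wedge n}) - \mathbb{E}(v_0).
\]
For $k < T_B$ we have $w_k \le B$, and at time $T_B$ the step size bound \eqref{e:stepbound} gives $w_{T_B} \le B + D$; combining with $|v_k - w_k| \le G$ yields $v_{T_B \wedge n} \le B + D + G$ uniformly in $n$. Hence $\mathbb{E}(T_B \wedge n) \le (B + D + G)/\alpha$ for every $n$, and monotone convergence gives $\mathbb{E}(T_B) \le c_2/\alpha$ with $c_2 = B + D + G$. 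In particular, $T_B < \infty$ almost surely, which also handles assertion (1).

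For the lower bound I would now remove the truncation. Since $|u_{n+1} - u_n| = |\zeta(\omega_{n-1},\omega_n)| \le D$ is uniformly bounded and $\mathbb{E}(T_B) < \infty$ is now known, the version of Doob's optional stopping theorem quoted just before Proposition~\ref{p:a>0AB} applies directly to $u_{T_B}$, giving $\mathbb{E}(v_{T_B}) = \alpha\, \mathbb{E}(T_B)$. At exit $w_{T_B} > B$, so $v_{T_B} > B - G$, which is strictly positive by assumption \eqref{e:BG}. Therefore $\alpha\, \mathbb{E}(T_B) \ge B - G$, and taking $c_1 = B - G$ completes the sandwich.

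The main obstacle, such as it is, is purely bookkeeping: one must be careful to invoke optional stopping twice (once on $T_B \wedge n$ to prove integrability of $T_B$, and only then on the honest stopping time $T_B$ with bounded increments), and to keep the correction between $v_n$ and $w_n$ transparent so that the constants $c_1, c_2$ depend only on $B$ and the structural constants $D, G$, not on $\alpha$. Notably, the full two-sided variance bound \eqref{e:varbound} is not needed here — only the zero-mean property from the Poisson equation and the pointwise bound \eqref{e:stepbound} enter, which is consistent with the spirit of Remark~\ref{r:a>0AB}.
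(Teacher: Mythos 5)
Your argument is correct, and it is a cleaner and more self-contained route than the one in the paper. The paper also applies Doob's optional stopping theorem to the linear martingale $u_n = v_n - n\alpha$ to obtain the final estimate, but it establishes the prerequisite $\mathbb{E}(T_B) < \infty$ indirectly: it bootstraps from the bounded-interval result in Proposition~\ref{p:a>0AB} (which relies on the exponential submartingale $e^{r v_n}$ and hence on the variance bounds), letting $|A|\to\infty$ and invoking monotone convergence together with the explicit formula \eqref{e:pAC1}. Your truncation argument — apply optional stopping to the bounded stopping time $T_B\wedge n$, bound $v_{T_B\wedge n}\le B+D+G$ uniformly via \eqref{e:stepbound} and \eqref{e:deltabound}, then pass to the limit — delivers both integrability and the upper bound in one stroke without invoking Proposition~\ref{p:a>0AB} at all, and then the lower bound follows exactly as in the paper (using that $v_{T_B} > B - G > 0$ by \eqref{e:BG}). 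What this buys you, as you correctly observe, is that not even the one-sided variance bound \eqref{e:varbound+} enters: only the Poisson-equation martingale property, the step bound $D$, and the bound $G$ on $\Delta$ are used, which is a mild strengthening of the paper's own remark that Proposition~\ref{p:a>0} uses only the upper variance bound. One small bookkeeping point: your constants $c_1 = B-G$ and $c_2 = B+D+G$ implicitly use the normalization $v_0 = 0$ (equivalently $w_0 = 0$); the paper uses the same convention, so this matches, but it is worth stating since the displayed martingale relation otherwise reads $\alpha\,\mathbb{E}(T_B\wedge n) = \mathbb{E}(v_{T_B\wedge n}) - \mathbb{E}(v_0)$.
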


\begin{proof}
Again we write $T$ as shorthand for $T_B$ inside this proof.
%
Let $u_n = v_n - n \alpha$. 
As noted earlier, 
$u_n$ is a martingale. 
Making $|A|$ bigger in Proposition~\ref{p:a>0AB} gives an increasing subset of $\Sigma_{\mathcal{A}}$ that leads to an escape through $B$.
In particular, $p_A$ goes to $0$ as $|A|\to \infty$.
If we let $|A| \to \infty$ in Proposition~\ref{p:a>0AB} we get $T<\infty$ almost everywhere.
By monotone convergence and \eqref{e:pAC1} we get $\mathbb{E} (T) < \infty$.
By Doob's optional stopping theorem, 
\[
\mathbb{E} (u_T) = \mathbb{E} (u_0).
\]
We find  $B + c - \mathbb{E} (T) \alpha = v_0$ for some $c \in [-G,D+G]$.
The proposition follows (noting $v_0 - B - c \ne 0$).
\end{proof} 

\subsection{Zero drift \texorpdfstring{$\alpha=0$}{alpha=0}}\label{ss:=0}

Again we first treat escape from a compact interval $[A,B]$ with $A<0$, $B>0$, $v_0=0$. Notation is as in the previous part treating $\alpha >0$.
We assume \eqref{e:ABG} to hold.

\begin{proposition}
With probability one, $T_{[A,B]}<\infty$.

There is $\alpha_0>0$ so that for $0< \alpha < \alpha_0$, the following holds.  
There exists $c_1,c_2$, 
such that
\[
c_1/|A|\le p_A \le c_2/|A|
\] 
if $B$ is bounded and $|A|$ is large.

Then \[\mathbb{E} (T\vert v_{T_{[A,B]}} < A) \le c A^2\] for some constant $c>0$, if $|A|$ is large.
\end{proposition}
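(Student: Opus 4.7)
The plan is to parallel the positive-drift Proposition~\ref{p:a>0AB} but exploit the fact that for $\alpha=0$ the shifted walk $v_n$ is itself a martingale, so one can avoid the exponential change of variable and work directly with $v_n$ and $v_n^2$. Almost-sure finiteness of $T_{[A,B]}$ is immediate from the recurrence assumption~\eqref{e:xM>L} via Lemma~\ref{l:Tfinite}, exactly as before. Throughout I will write $T=T_{[A,B]}$ and assume without loss $w_0 = 0$, so $v_0 = 0$ as well.

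For the exit probabilities, I would apply Doob's optional stopping theorem to the martingale $v_n$ (a martingale by \eqref{e:pzeta=0}) at the bounded stopping time $T \wedge n$. For $n < T$ one has $w_n \in [A,B]$, hence by \eqref{e:vnwn} and \eqref{e:deltabound} one has $v_n \in [A-G, B+G]$; at time $T$ a single step of size at most $D$ gives $v_T \in [A-G-D, A+G] \cup [B-G, B+G+D]$. Thus $v_{T \wedge n}$ is uniformly bounded, $\mathbb{E}(v_{T \wedge n}) = 0$, and by dominated convergence (using $T<\infty$ a.s.) $\mathbb{E}(v_T) = 0$. Writing $v_T = A + c_A$ on $\{v_T < A\}$ and $v_T = B + c_B$ on $\{v_T > B\}$ for corrections $c_A, c_B$ with $|c_A|, |c_B| \le G+D$, this gives
\begin{align*}
p_A (A + c_A) + (1 - p_A)(B + c_B) &= 0,
\end{align*}
so $p_A = (B+c_B)/(B - A + c_B - c_A)$. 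Since $B > G$ (so $B + c_B$ is bounded away from $0$ by a positive constant depending only on $G,D$) and $|A|$ is large, both bounds $c_1/|A| \le p_A \le c_2/|A|$ follow.

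For the expected escape time, the main tool is the quadratic martingale
\begin{align*}
M_n &= v_n^2 - \sum_{i=1}^n V(\omega_{i-1}), \qquad V(\omega_{i-1}) = \sum_j \pi_{\omega_{i-1} j} \zeta(\omega_{i-1}, j)^2,
\end{align*}
which is a martingale by direct computation using $\mathbb{E}(\zeta_n \vert \mathcal{F}_n) = 0$. Applying optional stopping to $M_{T \wedge n}$ and letting $n \to \infty$ (with $v_{T \wedge n}^2$ uniformly bounded, and using monotone convergence for $T \wedge n \uparrow T$ plus the variance bounds \eqref{e:varbound}) yields
\begin{align*}
V^- \, \mathbb{E}(T) \;\le\; \mathbb{E}(v_T^2) \;\le\; V^+ \, \mathbb{E}(T).
\end{align*}
Now $\mathbb{E}(v_T^2) = p_A (A + c_A)^2 + (1-p_A)(B + c_B)^2$, and with $B$ bounded, $|A|$ large, and $p_A \le c_2/|A|$, the first term is $O(|A|)$ and the second is $O(1)$, so $\mathbb{E}(T) \le c|A|$. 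The conditional bound then follows mechanically:
\begin{align*}
\mathbb{E}(T \vert v_T < A) \;=\; \frac{\mathbb{E}(T ; v_T < A)}{p_A} \;\le\; \frac{\mathbb{E}(T)}{p_A} \;\le\; \frac{c |A|}{c_1/|A|} \;=\; c' A^2.
\end{align*}

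The main obstacle is the justification of optional stopping: since $T$ is not a priori bounded (and indeed $\mathbb{E}(T)$ is precisely what we are trying to estimate), one must truncate at $T \wedge n$, use the uniform bound on $v_{T \wedge n}$ to control the martingale piece, and then carefully pass to the limit in both the linear and quadratic martingale identities. Once that is in place, everything else is bookkeeping with the boundary overshoots $c_A, c_B$ and the variance constants $V^-, V^+$.
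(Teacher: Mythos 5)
Your proposal is correct and follows essentially the same route as the paper: almost-sure finiteness from the recurrence property via Lemma~\ref{l:Tfinite}, optional stopping applied to the linear martingale $v_n$ for the exit probabilities, and a quadratic (sub/super)martingale built from $v_n^2$ together with the variance bounds $V^\pm$ for $\mathbb{E}(T)$ and hence the conditional bound. The only (minor) differences are stylistic: you use the exact compensated martingale $v_n^2 - \sum_i V(\omega_{i-1})$ and then bracket the compensator by $V^\pm T$, whereas the paper directly works with the submartingale $v_n^2 - nV^-$ and the corresponding supermartingale; and you spell out the truncation at $T\wedge n$ and the boundary overshoots $c_A,c_B$, which the paper suppresses.
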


\begin{proof}
As earlier we write $T$ for $T_{[A,B]}$ inside this proof.
Note that with $\alpha = 0$, $v_n$ is a martingale. Doob's optional stopping theorem gives
$\mathbb{E} (v_T) = \mathbb{E} (v_0) = 0$. Therefore
$p_A A + (1-p_A) B$ is constant and the bounds on $p_A$ follow.
%

To estimate the stopping time conditional to an escape through the left boundary $A$, we introduce 
 $u_n = v_n^2 - n V^-$. 
Then
\begin{align*}
\mathbb{E} ( u_{n+1} \vert \mathcal{F}_n) &=   \mathbb{E} ( (v_n+\zeta)^2 - (n+1) V^-  \vert \mathcal{F}_n)
\\
&\ge v_n^2 -n V^-
\\
&= u_n,
\end{align*}
so that $u_n$ is a submartingale. Likewise a supermartingale can be created, replacing $V^-$ with $V^+$. 
Then $\mathbb{E} (u_T) \ge \mathbb{E} ({u}_0) = 0$ by Doob's optional stopping theorem, giving
\[ p_A A^2 + (1-p_A) B^2 - \mathbb{E}(T) V^- \ge 0.\]
We get $\mathbb{E} (T) \le c A$ for a constant $c>0$. Likewise, calculating with the supermartingale, $\mathbb{E} (T) \ge c A$ for a constant $c>0$. 
So $\mathbb{E} (T \vert v_{T} < A )  \le c A^2$ for some constant $c>0$.  
\end{proof}

The next proposition treats escape from $(-\infty,B]$ for $v_0 <B$.

\begin{proposition}
With probability one, $T_B < \infty$. The expected escape time is infinite:
\[\mathbb{E} (T_B) = \infty.\] 
\end{proposition}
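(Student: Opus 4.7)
My plan is to reduce both assertions to the bounded-interval escape time $T_{[A,B]}$ treated in the preceding proposition, then pass to the limit $|A|\to\infty$. The bridge is the pathwise monotonicity $T_B \ge T_{[A,B]}$: an escape from $[A,B]$ through the right boundary $B$ gives $T_B = T_{[A,B]}$, while an escape through $A$ leaves $w_n$ inside $(-\infty,B]$ so $T_B$ must occur strictly later.

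For the first claim, fix $A < v_0$. Lemma~\ref{l:Tfinite} gives $T_{[A,B]}<\infty$ almost surely, and the preceding proposition bounds $p_A \le c_2/|A|$. On the event of exit through $B$, which has probability at least $1 - c_2/|A|$, we have $T_B = T_{[A,B]} < \infty$. Sending $|A|\to\infty$ yields $\nu\{T_B<\infty\}=1$.

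For the second claim, combine $T_B \ge T_{[A,B]}$ with the lower bound $\mathbb{E}(T_{[A,B]}) \ge c|A|$ from the preceding proposition, obtained via the supermartingale $v_n^2 - nV^+$ and Doob's optional stopping theorem together with the lower bound $p_A \ge c_1/|A|$. This gives $\mathbb{E}(T_B) \ge c|A|$ for all $|A|$ sufficiently large, and letting $|A|\to\infty$ forces $\mathbb{E}(T_B)=\infty$.

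The main thing to verify is that the constant $c$ in the lower bound on $\mathbb{E}(T_{[A,B]})$ is indeed uniform in $A$ (and in any initial distribution of $\omega_0$). This is essentially bookkeeping: in the $\alpha=0$ supermartingale computation of the preceding proposition the estimate $\mathbb{E}(T_{[A,B]})V^+ \ge \mathbb{E}(v_{T_{[A,B]}}^2) - v_0^2 \ge c_1 |A| - v_0^2$ depends only on the uniform quantities $V^\pm$, $D$, $G$ and on $p_A \ge c_1/|A|$, none of which degrade as $|A|$ grows, so no further estimates are needed and the limit argument goes through cleanly.
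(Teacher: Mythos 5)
Your proof is correct, and it takes a genuinely different (and more self-contained) route than the paper's one-line argument. The paper disposes of this proposition by invoking Proposition~\ref{p:a>0} and ``letting $\alpha$ go to zero'': implicitly one compares the drift-$0$ walk with the same walk plus positive drift $\alpha$, notes pathwise that the latter escapes at least as fast (so $T_B^{(0)}\ge T_B^{(\alpha)}$), and uses $\mathbb{E}(T_B^{(\alpha)})\ge c_1/\alpha\to\infty$. You instead stay entirely inside the $\alpha=0$ setting and let the \emph{left endpoint} $|A|\to\infty$ in the preceding compact-interval proposition, combining $T_B\ge T_{[A,B]}$ with the lower bound $\mathbb{E}(T_{[A,B]})\ge c|A|$ that comes from the supermartingale $v_n^2-nV^+$ and $p_A\ge c_1/|A|$. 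For the a.s.\ finiteness, your use of $p_A\to 0$ as $|A|\to\infty$ is in fact exactly what is done inside the proof of Proposition~\ref{p:a>0}, so there the two are the same; for the expected time, your route avoids the unstated coupling across drift parameters and instead reads the divergence directly off the $\alpha=0$ escape-time estimate. Two small remarks: the lower bound $\mathbb{E}(T_{[A,B]})\ge c|A|$ is in the \emph{proof} of the preceding proposition rather than its statement (you re-derive it correctly); and when you write $\mathbb{E}(v_{T_{[A,B]}}^2)\ge c_1|A|$ you are silently dropping the $(1-p_A)B^2$ contribution and the boundary overshoot corrections $c_A\in[-G,G+D]$, both of which only help, so the inequality is fine as stated.
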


\begin{proof}
This follows from Proposition~\ref{p:a>0}, letting $\alpha$ go to zero. 
\end{proof}

\subsection{Negative drift \texorpdfstring{$\alpha<0$}{alpha<0}}\label{ss:<0}

For compact intervals this case is similar to the case $\alpha>0$.
We just consider the probability of escape from an unbounded interval $(-\infty,B]$. We assume $w_0=0$ and \eqref{e:BG}.
Write $p_B$ for the probability to never escape. For $\omega_0$ from the stationary distribution, 
$p_B = \nu (\{ \omega  \; ; \; v_n < B \textrm{ for all } n \in \mathbb{N}    \})$.

%

\begin{proposition}\label{p:escapeprob}
There is $\alpha_0<0$ so that for $\alpha_0<\alpha<0$,
\[
 c |\alpha| \le p_B \le C |\alpha|
\]
for some positive constants $c<C$.
\end{proposition}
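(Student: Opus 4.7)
My plan is to mirror the proof of Proposition~\ref{p:a>0AB}: build exponential sub- and supermartingales $e^{rv_n}$, apply Doob's optional stopping to the bounded escape time $T_{[A,B]}$, and then pass to the limit $A\to-\infty$. The only change from the positive-drift case is that because $\alpha<0$ the useful exponents $r$ are positive rather than negative. Setting $z_n=e^{rv_n}$ and using \eqref{e:vn} together with the Poisson identity \eqref{e:pzeta=0}, a Taylor expansion (legitimate since $|\zeta|\le D$ by \eqref{e:stepbound}) gives
\[
\mathbb{E}(z_{n+1}\mid\mathcal{F}_n) = z_n\Bigl(1+r\alpha+\tfrac12 r^2 V(\omega_{n-1})+O(r^3)\Bigr),
\]
where $V(\omega_{n-1})=\sum_j\pi_{\omega_{n-1}j}(\zeta(\omega_{n-1},j)+\alpha)^2$ lies in $(V^-,V^+)$ up to an $O(\alpha^2)$ correction by \eqref{e:varbound}. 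Solving the quadratic for $r>0$, the choices
\[
r^+ := \tfrac{2|\alpha|}{V^-}+O(\alpha^2),\qquad r^- := \tfrac{2|\alpha|}{V^+}-O(\alpha^2)
\]
are both positive and of order $|\alpha|$, and they make $e^{r^+v_n}$ a submartingale and $e^{r^-v_n}$ a supermartingale.

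For $A<0$ with $|A|>G$, the stopping time $T=T_{[A,B]}$ is a.s.\ finite by Lemma~\ref{l:Tfinite}, and $v_n$ stays in $[A-G-D,B+G+D]$ up to time $T$, so the bounded form of Doob's optional stopping theorem applies to both exponential processes. Writing $q_{[A,B]}$ for the probability that $v_T>B$ and using the terminal locations $v_T\in(B-G,B+G+D)$ or $v_T\in(A-G-D,A+G)$, the inequalities $\mathbb{E}(e^{r^+v_T})\ge1$ and $\mathbb{E}(e^{r^-v_T})\le1$ rearrange, exactly as in the proof of Proposition~\ref{p:a>0AB}, to
\[
\frac{1-e^{r^+(A+G)}}{e^{r^+(B+G+D)}-e^{r^+(A+G)}}\;\le\;q_{[A,B]}\;\le\;\frac{1-e^{r^-(A-G-D)}}{e^{r^-(B-G)}-e^{r^-(A-G-D)}}.
\]

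Finally, letting $A\to-\infty$, the events $\{v_T>B\}$ increase monotonically to $\{\exists n: v_n>B\}$, hence $q_{[A,B]}\nearrow 1-p_B$. Since $r^\pm>0$, the $A$-dependent exponentials vanish and the bound collapses to $e^{-r^+(B+G+D)}\le 1-p_B\le e^{-r^-(B-G)}$, i.e.\
\[
1-e^{-r^-(B-G)} \;\le\; p_B \;\le\; 1-e^{-r^+(B+G+D)}.
\]
Because $r^\pm=O(|\alpha|)$ for small $|\alpha|$ and $B>G$, a first-order expansion of $1-e^{-x}$ yields $c|\alpha|\le p_B\le C|\alpha|$ with positive constants depending only on $B,G,D,V^\pm$. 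The only genuinely delicate step is this final passage $A\to-\infty$, which converts a bounded-interval estimate into a one-sided bound; it rests on monotone convergence of the escape events together with the fact that $r^\pm>0$, so the $A$-dependent exponentials actually collapse in the limit. The rest is routine Taylor-series bookkeeping controlled by the uniform bounds \eqref{e:stepbound} and \eqref{e:varbound}.
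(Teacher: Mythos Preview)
Your proof is correct and follows essentially the same route as the paper's: construct exponential sub- and supermartingales $e^{rv_n}$ with positive exponents (the sign flip relative to Proposition~\ref{p:a>0AB} coming from $\alpha<0$), apply Doob's optional stopping on the bounded interval $[A,B]$, and then let $A\to-\infty$ so that the $A$-dependent exponentials vanish. The only cosmetic difference is that you track $q_{[A,B]}=1-p_A$ rather than $p_A$ itself, but the resulting inequalities and the limiting step $p_A\searrow p_B$ (equivalently $q_{[A,B]}\nearrow 1-p_B$) are identical to the paper's argument culminating in \eqref{e:C5est}.
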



\begin{proof}
Consider escape from an interval $[A,B]$ with $B$ fixed and $|A|$ large.
As in the proof of Proposition~\ref{p:a>0AB} we get for the probability $p_A$ of escape through $A$,
\[
p_A e^{r^-(A - c_A)} + (1-p_A)e^{r^-(B+ c_B)} \ge 1,
\]
where $r^- \ge (-2/V^-)\alpha$ for $|\alpha|$ small.
From this we obtain
\begin{align*}
p_A \left( e^{r^- A}e^{-r^- c_A}  - e^{r^- B}e^{r^- c_B}\right) &\ge 1 - e^{r^- B}e^{r^- c_B},
\end{align*}
and thus
\begin{align*}
p_A &\le \frac{e^{r^- B}e^{r^- c_B} - 1 }{e^{r^- B}e^{r^- c_B} - e^{r^- A}e^{-r^- c_A}},
\end{align*}
compare \eqref{e:pA}.
%
%
With $r^- > 0$,  $e^{r^- A}e^{-r^- c_A}$ goes to zero as $A \to -\infty$.
Hence, from
\begin{align}\label{e:C5est}
p_A &\le \frac{1 - e^{ - r^- B}e^{r^- c_B} }{ 1 - e^{-r^- B}e^{-r^- c_B}e^{r^- A}e^{-r^- c_A}},
\end{align}
we find $p_A \le C |\alpha|$ for a constant $C$ independent of $A$.
%
When taking $|A|$ larger, 
for increasing subsets of $\Sigma_\mathcal{A}$, orbits escape through $B$.   
Letting $A$ go to $-\infty$ therefore proves the upper bound on $p_B$.

The lower bound is proved similarly using a lower bound for $p_A$ as in the proof of Proposition~\ref{p:a>0AB}.
\end{proof}

\section{Doubling maps}\label{a:fordoubling}

The previous appendices considered general subshifts with a primitive adjacency matrix.
The results on the stopping times in Appendix~\ref{a:stop} rely on bounds for the solution of the Poisson equation. 
Here we specialize to subshifts corresponding to Markov partitions for   doubling maps, and 
derive the bounds for this case.


Recall the setting from Section~\ref{s:approxbystep}: given is the subshift $\sigma$ on $\Sigma_{\mathcal{A}_N} \subset \{ 1,\ldots,K\}^\mathbb{N}$, $K=2^N$,
with adjacency matrix  $\mathcal{A}_N = (a_{ij})^{K}_{i,j=1}$ satisfying $a_{ij} = 1$ precisely if $j = 2i-1$ or $j=2i$ modulo $K$.
We start with a remark on subshifts on $\Sigma_{\mathcal{A}_N}$ for different values of $N$.
There is a natural topological conjugacy between the shifts $\sigma$ on $\Sigma_{\mathcal{A}_{N_0}}$ and $\Sigma_{\mathcal{A}_{N_1}}$ for different $N_0,N_1$, 
given by a homeomorphism $\Theta_{N_0,N_1}$,
\begin{align*}
 \sigma\vert_{\Sigma_{\mathcal{A}_{N_0}}} = (\Theta_{N_0,N_1})^{-1} \circ \sigma\vert_{\Sigma_{\mathcal{A}_{N_1}}} \circ \Theta_{N_0,N_1}.
\end{align*}
Given the product measures $\nu_{N_0}, \nu_{N_1}$ on the spaces $\Sigma_{\mathcal{A}_{N_0}}$ and $\Sigma_{\mathcal{A}_{N_1}}$, we have
\[ 
(\Theta_{N_0,N_1})_*  \nu_{N_0} = \nu_{N_1}.
\]
The topological conjugacy $\Theta_{N_0,N_1}$ between the shift operators on these spaces thus provides a measurable isomorphism.
For $N_1 > N_0$, $\sigma^{N_1-N_0}$ maps a cylinder of rank one in $\Sigma_{\mathcal{A}_{N_0}}$ one-to-one to a cylinder of rank one in 
$\Sigma_{\mathcal{A}_{N_1}}$.

Consider a Markov random walk 
\[
 x_{n+1} = x_n + \xi_N (\sigma^n \omega) 
\]
with $\omega\in \Sigma_{\mathcal{A}_N}$.
In our setting the steps $\xi_N$ come from discretizing a smooth function $\xi: \I \to \mathbb{R}$.
Assume 
\[ 
P_\nu \xi_N = 0
\] 
(see Appendix~\ref{a:subshift}).
Lemma~\ref{l:poisson} and Lemma~\ref{l:martingale} give the existence of a function $\Delta_N: \Sigma_{\mathcal{A}_N} \to \mathbb{R}$ so that 
\begin{align*}
u_n &= x_n - n P_\nu \xi_N  - \Delta_N(\sigma^{n-1}\omega) + \Delta_N(\omega)
\end{align*}
is a martingale with respect to $\mathcal{F}_n$.
We recall
\begin{align*}
 u_{n+1} &= u_n + \xi_N (\sigma^n \omega) - \Delta_N (\sigma^{n}\omega) + \Delta_N (\sigma^{n-1} \omega)
 \\
 &= u_n + \zeta_N (\omega_{n-1},\omega_n).
\end{align*}

The following example treats the symmetric random walk in this setup. The steps are $\pm 1$ and do not originate from discretizing a smooth function $\xi$.

\begin{example}[The symmetric random walk]\label{ex:srw}
The symmetric random walk with steps $\pm 1$ is given by the stochastic matrix $\Pi_1$ and vector $\xi_1$,
\[ \Pi_1 = \left(\begin{array}{cc} 1/2 & 1/2 \\ 1/2 & 1/2 \end{array} \right), \qquad \xi_1 =  \left( \begin{array}{c} 1 \\ -1 \end{array}\right).\] 
Here, of course, \[\Delta_1 = \left( \begin{array}{c} 0 \\ 0 \end{array}\right).\]   
%
A refined partition is made from $2$-tuples of consecutive symbols $(\omega_0\omega_1)$.
For this refined Markov partition on four symbols $1,2,3,4$ we get to consider 
\[ 
\Pi_2 =  \frac{1}{2} \left( \begin{array}{cccc} 1 & 1 & 0 & 0 \\  0 & 0 & 1 & 1   \\ 1 & 1 & 0 & 0 \\  0 & 0 & 1 & 1    \end{array} \right), 
\qquad \xi_2 = \left( \begin{array}{c} 1 \\ 1 \\ -1 \\ -1 \end{array}\right).
\]
Here 
$(\Pi_2 - id) \Delta_2 = \Pi_2 \xi_2$ 
is solved by
\[ 
\Delta_2 = \left( \begin{array}{c} -1 \\ 1 \\ -1 \\ 1 \end{array}\right).
\]
%

Continuing the previous examples one consider $N$-tuples of consecutive symbols $(\omega_0\cdots\omega_{N-1})$. 
As there are $2^N$ such $N$-tuples,
one obtains $2^N\times 2^N$-matrices $\Pi_N$ and $2^N$ dimensional vectors $\xi_N$.
That is, for $\Pi_N = (\pi_{ij})_{1\le i,j\le 2^N}$ and giving the $N$-tuples the lexicographical order, we find
\[
 \pi_{ij} = \left\{ \begin{array}{ll} 1/2, &  j \in \{ 2i-1, 2i \} \mod 2^N,  \\  0, & \textrm{otherwise}. \end{array}\right.
\]
For $\xi_N$, for the first $2^{N-1}$ indices the value is $1$, for the others the value is $-1$.
One can characterize $\Delta_N$ as
\[
\Delta_N(i) = \xi_N(i) + |\xi_N(i)| \left( \sharp \{0\le j\le N-1 \; ; \; \omega_j = 2 \} - \sharp  \{0\le j\le N-1 \; ; \; \omega_j = 1 \}\right)
\]
if
$i$ corresponds to the $N$-tuple $(\omega_0\cdots\omega_{N-1})$.
Figure~\ref{f:n=10bernoullidot} illustrates graphically the solution $\Delta_N$ for $N=10$.
\begin{figure}[phtb]

\begin{center}
\includegraphics[width=14cm]{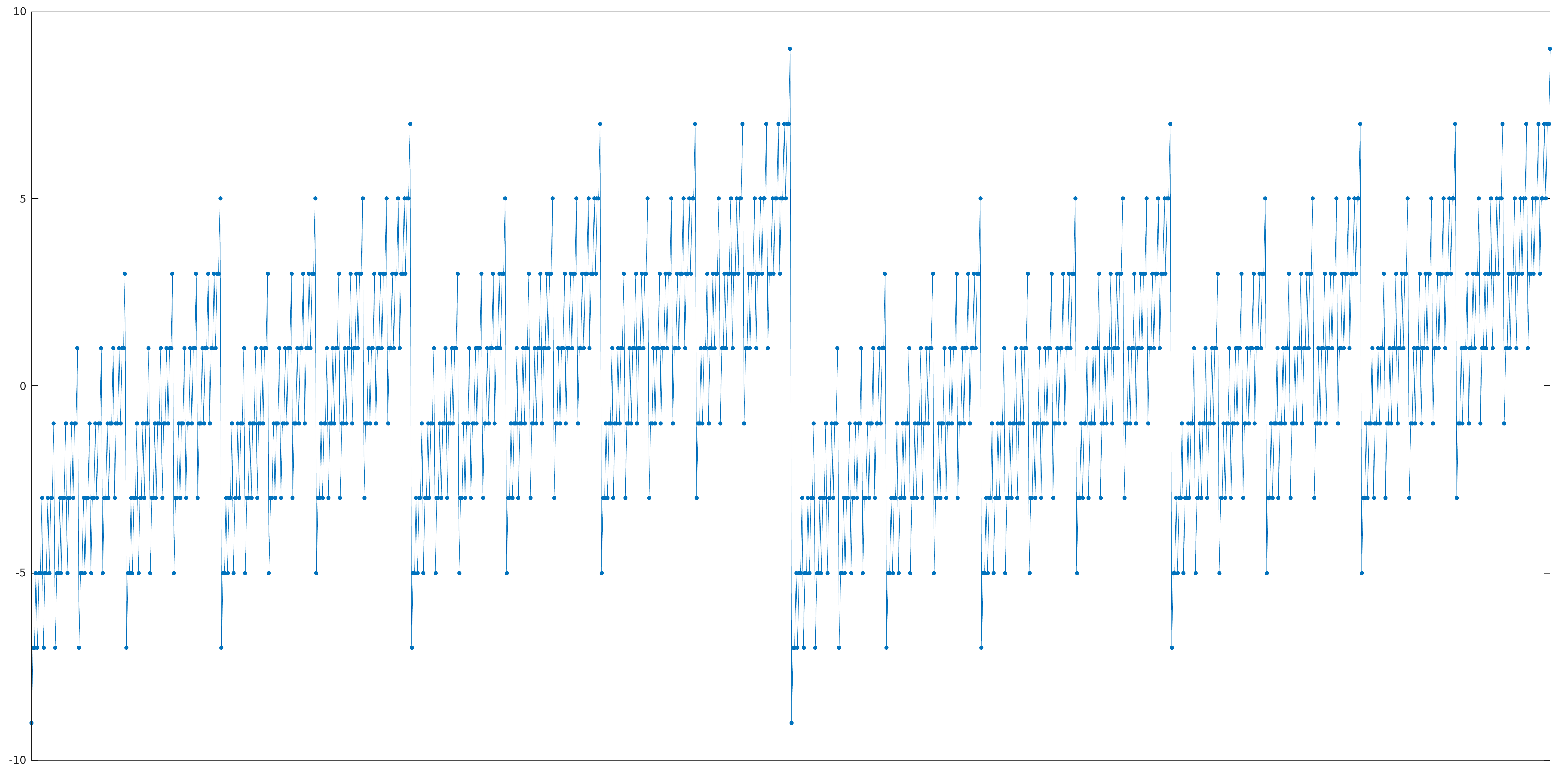}
\caption{This figure illustrates the solution of the Poisson equation for a fine Markov partition coming from the symmetric random walk in Example ~\ref{ex:srw}.
Depicted is the solution $\Delta_N$ for the Poisson equation for $N=10$, i.e. for a Markov partition with $1024$ elements.  The horizontal axis contains the indices $1$ to $1024$, the vertical axis the corresponding value of $\Delta_N$. 
The values are connected by line pieces.
\label{f:n=10bernoullidot}}
\end{center}

\end{figure}
\end{example}



%
%

The following two lemmas provide the bounds for solutions of Poisson equations needed in Appendix~\ref{a:stop}
(see \eqref{e:deltabound}, \eqref{e:stepbound} and \eqref{e:varbound}), discussing dependence on $N$.
The first lemma assumes a smooth function $\xi$ and does not require $\xi$ to be strictly monotone.

\begin{lemma}\label{l:boundDV}
Assume that $\xi: [0,1] \to \mathbb{R}$ is a smooth function with $\int_0^1 \xi(y) \, dy = 0$.
There are uniform (i.e. independent of $N$) positive constants $C,D,V^+$, with
\begin{align*}
|\Delta_N| \le C N,
\end{align*}
for all admissible $i,j$,
\begin{align*}
 \vert \zeta_N (i,j) \vert \le D
\end{align*}
and
\begin{align}\label{e:varbound2}  
\sum_{j=1}^K  \pi_{ij}  \zeta_N (i,j)^2 &\le V^+.
\end{align}
\end{lemma}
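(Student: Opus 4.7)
The plan is to solve the Poisson equation explicitly as a finite sum, exploiting the rapid equidistribution of the Markov chain driven by $\Pi_N$, and then use $C^1$-regularity of $\xi$ to bound the successive differences $\Delta_N(2i)-\Delta_N(2i-1)$ uniformly in $N$. The starting point is the already-noted fact that $\mathcal{A}_N^N$ is the all-ones matrix, which gives $(\Pi_N^N)_{ij}=2^{-N}=p_j$ for every pair $i,j$: the $N$-step distribution starting from any state is already the stationary distribution. Consequently $P^N\xi_N\equiv P_\nu\xi_N=0$, and a direct verification shows that
\begin{align*}
\Delta_N \;=\; -\sum_{k=1}^{N-1} P^k \xi_N
\end{align*}
is the unique $\mathcal{F}_1$-measurable solution of \eqref{e:Poisson} with $P_\nu\Delta_N=0$. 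Since each $P^k\xi_N$ is a convex combination of values of $\xi_N$, $|P^k\xi_N|\le\|\xi\|_\infty$, and the bound $|\Delta_N|\le CN$ with $C=\|\xi\|_\infty$ is immediate.

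For $\zeta_N$, rewrite the Poisson equation as $\Delta_N(i)=\sum_j \pi_{ij}[\Delta_N(j)-\xi_N(j)]$. Since the out-neighbours of $i$ are $2i-1$ and $2i$ modulo $K$, a direct computation yields
\begin{align*}
\zeta_N(i,2i-1) \;=\; -\zeta_N(i,2i) \;=\; \tfrac{1}{2}\bigl[\Delta_N(2i)-\Delta_N(2i-1)\bigr]-\tfrac{1}{2}\bigl[\xi_N(2i)-\xi_N(2i-1)\bigr].
\end{align*}
The second bracket is $O(2^{-N})$ by smoothness of $\xi$, since $P_{2i-1}$ and $P_{2i}$ are adjacent bins of length $2^{-N}$. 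For the first bracket, apply the series formula term by term. Two elementary facts are key. First, via the measurable isomorphism with the doubling map, $P^k\xi_N(i)=2^{-k}\sum_{P_j\subset E_2^k(P_i)}\xi_N(j)$ is a Riemann sum for $\xi$ over $E_2^k(P_i)$ using $2^k$ sample points (one per subbin), so $C^1$-regularity gives
\begin{align*}
\bigl| P^k\xi_N(i)-\bar\xi(E_2^k(P_i))\bigr| \;\le\; \|\xi'\|_\infty\cdot 2^{-N}
\end{align*}
uniformly in $i,k,N$, where $\bar\xi(A)=|A|^{-1}\int_A\xi(y)\,dy$. Second, for $k\le N-1$ the Markov property forces $E_2^k$ to be affine and monotone on $P_{2i-1}\cup P_{2i}$ (no branch boundary of $E_2^k$ lies strictly inside this union of length $2^{1-N}$), so the images $E_2^k(P_{2i-1})$ and $E_2^k(P_{2i})$ are adjacent halves of $E_2^{k+1}(P_i)$, each of length $2^{k-N}$. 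The mean value theorem then gives
\begin{align*}
\bigl|\bar\xi(E_2^k(P_{2i}))-\bar\xi(E_2^k(P_{2i-1}))\bigr| \;\le\; \|\xi'\|_\infty\cdot 2^{k+1-N}.
\end{align*}
Summing over $k=1,\ldots,N-1$ produces a geometric series $\sum_k 2^{k+1-N}<4$ plus Riemann errors totalling $O(N\cdot 2^{-N})=O(1)$, so $|\Delta_N(2i)-\Delta_N(2i-1)|\le C$ uniformly in $N$. Combined with the bound on the second bracket, $|\zeta_N(i,j)|\le D$ for a uniform $D$, and the variance bound \eqref{e:varbound2} then follows immediately from $\sum_j\pi_{ij}\zeta_N(i,j)^2\le D^2=:V^+$.

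The main obstacle is the uniform step bound: although $|\Delta_N|$ itself may grow like $N$, the quantity $\zeta_N$ sees only the successive differences $\Delta_N(2i)-\Delta_N(2i-1)$, and these must remain bounded uniformly in $N$. This cancellation is invisible from the crude size bound and is accessible only through the explicit series formula, combined with the geometric fact that $E_2^k(P_{2i-1})$ and $E_2^k(P_{2i})$ sit as adjacent halves of a common interval of length $2^{k+1-N}$, which converts $C^1$-regularity of $\xi$ into summable geometric decay in $k$.
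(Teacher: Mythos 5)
Your proposal is correct and takes essentially the same route as the paper: expand $\Delta_N$ as a finite geometric series $\sum_{k=1}^{N-1}\Pi_N^k\xi_N$ (up to a sign the paper actually records with the opposite sign, harmlessly), get $|\Delta_N|\le CN$ from boundedness of $\xi$, and then obtain the uniform bound on $\zeta_N$ from the cancellation $\zeta_N(i,2i-1)=-\zeta_N(i,2i)$ together with the estimate $|(\Pi_N^k\xi_N)(2i)-(\Pi_N^k\xi_N)(2i-1)|\lesssim 2^{k-N}$, which sums to a convergent geometric series. The only difference is cosmetic: where the paper simply asserts the per-level difference bound ``since $\xi$ is smooth,'' you spell out the Riemann-sum interpretation and the affine-on-$P_{2i-1}\cup P_{2i}$ geometry of $E_2^k$; the substance of the argument is identical.
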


\begin{proof}
We know
\[
(\Pi_N - \mathrm{id}) \Delta_N  = \Pi_N \xi_N.
\]
Since here
\[
\Pi_N^{N} \xi_N = 0,
\] 
we find 
\begin{align}\label{e:DeltaN}
\Delta_N &= (\Pi_N + \cdots +\Pi_N^{N-1})  \xi_N.
\end{align}
Powers of $\Pi_N$ are given by
\begin{align}\label{e:PIMN}
 \left(\Pi_N^M\right)_{ij} &= \left\{  \begin{array}{ll} 
                                               1/2^M, & j \in [2^M (i-1)+1,2^M] \mod K, \\
                                               0, & \textrm{otherwise}.
                                      \end{array}
\right.
\end{align}
Since $\xi_N$ is bounded we get that $|\Pi_N^j\xi_N|$ is bounded and thus 
the first bound stating $|\Delta_N| \le CN$ for some $C>0$, follows.

Since $\xi$ is smooth we get  
\begin{align}\label{e:smooth}
\vert (\Pi_N^j  \xi_N)_k - (\Pi_N^j \xi_N)_{k+1} \vert \le C / 2^{N-j}
\end{align}
for some $C>0$.
Combining \eqref{e:DeltaN}, \eqref{e:PIMN} and \eqref{e:smooth} proves a bound
\begin{align}\label{e:D-D}
|\Delta_N (2i) - \Delta_N (2i-1)| &\le C
\end{align}
for some $C>0$. 
Note that this bound holds uniformly in $N$.
Lemma~\ref{l:poisson}  yields  (with indices modulo $K$)
\begin{align*}
\zeta_N (i,2i-1) + \zeta_N(i,2i) &= 
\xi_N (2i-1) - \Delta_N (2i-1) + \Delta_N (i) 
\\ &\;\;\;\; + \xi_N(2i) - \Delta_N(2i) + \Delta_N (i) 
\\ 
&= 0.
\end{align*}
We conclude that $|\zeta_N (i,j)|$ (where $j=2i-1$ or $j=2i$ modulo $K$) is uniformly bounded:
\[
 |\zeta_N (i,j)| \le \max_{1\le k \le K} |\xi_N (k)| + C/2,
\]
with $C$ coming from \eqref{e:D-D}.
The bound \eqref{e:varbound2} follows. 
\end{proof}

Figure~\ref{f:n=10linearupdot} shows the solution of the Poisson equation for a discretization of $\xi=-1+2y$, the function that features in Figures~\ref{f:-1+2y} and \ref{f:-1+2ypert}.
\begin{figure}[phtb]

\begin{center}
\includegraphics[width=14cm]{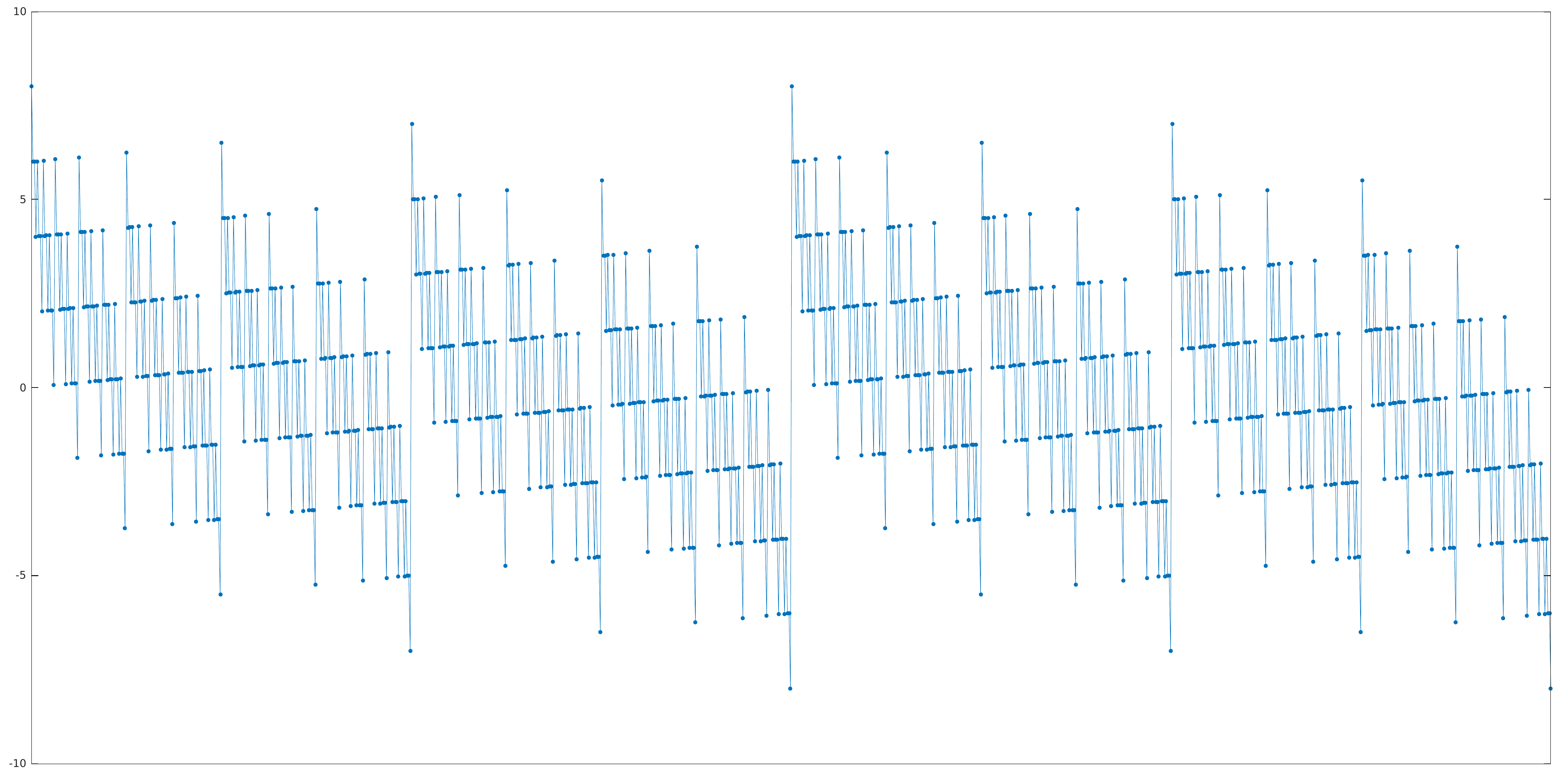}
\caption{This figure illustrates the solution of the Poisson equation when discretizing the function $\xi(y) = -1+2y$ using a fine Markov partition.
Depicted is the solution $\Delta_N$ for the Poisson equation for $N=10$, i.e. for a Markov partition with $1024$ elements.  The horizontal axis contains the indices $1$ to $1024$, the vertical axis the corresponding value of $\Delta_N$. 
The values are connected by line pieces.
\label{f:n=10linearupdot}
}
\end{center}

\end{figure}
The material in Appendix~\ref{a:stop} assumes for some results, in addition to the bounds from Lemma~\ref{l:boundDV}, a positive lower bound 
on $\sum_{j=1}^K  \pi_{ij}  \zeta_N (i,j)^2$ (expressed by \eqref{e:varbound}).
See also Lemma~\ref{l:boundDViter} for a corresponding statement on skew product systems over stronger expanding maps $E_m$. 

\begin{lemma}\label{l:boundV-}
Assume that $\xi: [0,1] \to \mathbb{R}$ is a smooth strictly monotone function with $\int_0^1 \xi(y) \, dy = 0$.
Then there is a positive constant $V^-$ so that, for all $N$ larger than some $N_0$,
\begin{align*}
V^- &\le  
\sum_{j=1}^K  \pi_{ij}  \zeta_N (i,j)^2.
\end{align*}
\end{lemma}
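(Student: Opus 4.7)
The first observation is structural: each state $i$ of the Markov chain has exactly two successors $2i-1$ and $2i$ (mod $K$), each with probability $1/2$. By the Poisson equation (Lemma~\ref{l:poisson}), $\sum_j \pi_{ij}\,\zeta_N(i,j)=0$, and since only the two terms $j=2i-1,2i$ are nonzero, $\zeta_N(i,2i-1) = -\zeta_N(i,2i)$. Therefore
\[
\sum_{j=1}^{K} \pi_{ij}\,\zeta_N(i,j)^2 = \zeta_N(i,2i-1)^2,
\]
and it suffices to produce a lower bound for $|\zeta_N(i,2i-1)|$ uniform in $i$ and in $N\ge N_0$.

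To get a tractable expression, combine
\[
2\zeta_N(i,2i-1) = \bigl[\xi_N(2i-1)-\xi_N(2i)\bigr] - \bigl[\Delta_N(2i-1)-\Delta_N(2i)\bigr]
\]
with the representation $\Delta_N=(\Pi_N+\Pi_N^2+\cdots+\Pi_N^{N-1})\xi_N$ of the Poisson solution from the proof of Lemma~\ref{l:boundDV} (up to an overall sign; the absent $M=N$ term $\Pi_N^N\xi_N$ vanishes because $P_\nu\xi_N=0$). Since $\xi_N=\Pi_N^0\xi_N$, this telescopes to
\[
2\zeta_N(i,2i-1) = \pm\sum_{M=0}^{N-1}\bigl[(\Pi_N^M\xi_N)_{2i-1}-(\Pi_N^M\xi_N)_{2i}\bigr].
\]
In continuous form, $(\Pi_N^M\xi_N)_{2i-1}$ is the average of $\xi$ on the interval $[a_M-h_M,a_M]$ and $(\Pi_N^M\xi_N)_{2i}$ the average on the adjacent interval $[a_M,a_M+h_M]$, with $h_M=2^{M-N}$ and $a_M=(2i-1)h_M$; the discretization error committed by replacing $\xi_N$ with the exact cell average is $O(2^{-N})$ per term.

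Strict monotonicity enters through the change-of-variable identity
\[
\frac{1}{h}\int_a^{a+h}\xi(y)\,dy \;-\; \frac{1}{h}\int_{a-h}^{a}\xi(y)\,dy = \frac{1}{h}\int_0^h\bigl[\xi(a+t)-\xi(a-t)\bigr]\,dt = \frac{2}{h}\int_0^h t\,\overline{\xi'}_t(a)\,dt,
\]
where $\overline{\xi'}_t(a)=\frac{1}{2t}\int_{-t}^{t}\xi'(a+s)\,ds$. Under the quantitative form $\xi'\ge c>0$ of strict monotonicity (which holds in the paper's guiding examples, such as $\xi(y)=-1+2y$ with $\xi'\equiv 2$), $\overline{\xi'}_t(a_M)\ge c$ and hence
\[
\bigl|(\Pi_N^M\xi_N)_{2i-1}-(\Pi_N^M\xi_N)_{2i}\bigr| \;\ge\; c\,h_M - O(2^{-N}).
\]
Summing and using $\sum_{M=0}^{N-1}h_M=1-2^{-N}$, then taking $N\ge N_0$ large enough to absorb the cumulative discretization error $O(N\cdot 2^{-N})$, one obtains $|\zeta_N(i,2i-1)|\ge c/4$, giving the lemma with $V^- = c^2/16$.

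The main obstacle is promoting literal strict monotonicity to the uniform quantitative bound $\xi'\ge c>0$. If $\xi'$ is only nonnegative with isolated zeros, $\overline{\xi'}_t(a_M)$ need not be uniformly positive, and one must instead localize to a fixed neighborhood $[y_0-\delta,y_0+\delta]\subset\{\xi'\ge c/2\}$ and exploit that the iterates $a_M\bmod 1 = E_2^M(a_0)$ terminate at $a_{N-1}=1/2$, with the pre-terminal values $a_{N-K}$ running through depth-$K$ preimages of $1/2$ under $E_2$. A density argument for these preimages shows that for $N$ large enough at least one $a_M$ with $h_M$ bounded below falls in $[y_0-\delta,y_0+\delta]$, and the corresponding single term already supplies the desired uniform-in-$i$ lower bound.
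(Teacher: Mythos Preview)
Your proof follows the same skeleton as the paper's: reduce to bounding $|\zeta_N(i,2i-1)|$ via $\zeta_N(i,2i-1)=-\zeta_N(i,2i)$, expand using $\Delta_N=\sum_{M\ge 1}\Pi_N^M\xi_N$, and exploit monotonicity of $\xi$. The telescoped identity $2\zeta_N(i,2i-1)=\pm\sum_{M=0}^{N-1}[(\Pi_N^M\xi_N)_{2i-1}-(\Pi_N^M\xi_N)_{2i}]$ is correct, and the interpretation of each summand as a difference of averages over adjacent dyadic intervals is fine (no wrap-around occurs, since the relevant blocks of indices always sit inside $\{1,\ldots,K\}$).

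Where you diverge from the paper is in how you extract the lower bound. You try to bound every summand from below by $ch_M$ using $\xi'\ge c>0$, and then flag the case where $\xi'$ has zeros as an obstacle requiring a localization argument. This detour is unnecessary. The paper's observation is simpler: because $\xi$ is strictly monotone, $\xi_N$ is monotone, so every summand $(\Pi_N^M\xi_N)_{2i}-(\Pi_N^M\xi_N)_{2i-1}$ has the same sign; hence it suffices to lower-bound a single term. The top term $M=N-1$ is (up to $O(2^{-N})$) the difference between the average of $\xi$ over $[1/2,1]$ and over $[0,1/2]$, which is a fixed positive number for any strictly monotone $\xi$ with mean zero, regardless of whether $\xi'$ vanishes somewhere. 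So $|\Delta_N(2i)-\Delta_N(2i-1)|\ge c>0$ uniformly, and then $|\zeta_N(i,2i-1)|\ge c/2-\tfrac{1}{2}|\xi_N(2i)-\xi_N(2i-1)|$, with the last term $O(2^{-N})$.

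In short: your argument is correct, but the hypothesis $\xi'\ge c$ and the subsequent localization sketch are both avoidable. The paper gets the full statement (bare strict monotonicity) directly by using sign-definiteness of the summands plus the single dominant $M=N-1$ term.
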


\begin{proof}
Assume for definiteness that $\xi$ is strictly increasing.
The proof of Lemma~\ref{l:boundDV} provided a bound
\[
\Delta_N (2i) - \Delta_N (2i-1) \le C
\] 
for some $C>0$. 
Recall
\begin{align*}
\Delta_N &= (\Pi_N + \cdots +\Pi_N^{N-1})  \xi_N
\end{align*}
and 
\begin{align*}
 \left(\Pi_N^M\right)_{ij} &= \left\{  \begin{array}{ll} 
                                               1/2^M, & j \in [2^M (i-1)+1,2^M] \mod K, \\
                                               0, & \textrm{otherwise}.
                                      \end{array}
\right.
\end{align*}
From these identities and the fact that 
$\xi$ is strictly increasing, we find
\begin{align}\label{e:positive}
0 &< c \le \Delta_N (2i) - \Delta_N (2i-1)
\end{align}
for some $c>0$. Indeed,  it follows that 
$\Delta_N (2i) - \Delta_N (2i-1)$ is a sum of positive values, where $\Pi_N^{N-1} \xi_N  (2i) -  \Pi_N^{N-1} \xi_N  (2i-1)$ is uniformly bounded from below.
The bounds $c$ therefore holds uniformly in $N$.

We conclude that
\[
 |\zeta_N (i,j)| \ge c/2 - \max_{1\le i\le K} (\xi_N (2i) - \xi_N(2i-1))/2,
\]
with $c$ coming from \eqref{e:positive} (where $j=2i-1$ or $j=2i$ modulo $K$).
For $N$ large enough,  $\xi_N (2i) - \xi_N(2i-1)$ are close and therefore $|\zeta_N (i,j)|$  is uniformly bounded from below.
%
%
\end{proof}

The uniform bounds $D, V^-,V^+$ make that the propositions in Appendix~\ref{a:stop} can be applied uniformly in $N$. 
The bound $|\Delta_N| \le C N$ in Lemma~\ref{l:boundDV} implies conditions on the size of the considered intervals, see e.g. 
\eqref{e:ABG} for Proposition~\ref{p:a>0AB}.

\section{General displacement functions}

In this section we show how to prove Theorem~\ref{t:birkhoffT}, with the methods used for Theorem~\ref{t:birkhoff00}. 
For an integer $m \ge 2$, consider skew product maps
\[
 (y,x) \mapsto (E_m (y) , g_y (x))
\]
from $\mathcal{S}^m_{r_0}$ on $\I \times \mathbb{R}$.

The following corollary substitutes Lemma~\ref{l:left}, that was formulated for monotone functions on $\I$. 
Notation is copied from there.
Suppose that $\xi$ is a smooth function on $\I$ with $\int_{\I} \xi (x) \, dx = 0$.

\begin{lemma}
For $m$ large enough, the following property holds.
For $r_0$ small enough, and any $L>0$, there is $\omega$ from a set of positive probability so that for $x \in \mathbb{R}$, there is 
$n\in\mathbb{N}$, $f^n_\omega    (x) < x-L$ 
($f^n_\omega (x) > x+L$) and
$f^i_\omega (x) < x$ ($f^i_\omega(x) > x$) for all $0<i\le n$.
\end{lemma}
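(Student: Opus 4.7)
My plan is to locate a short interval of $\I$ on which $\xi$ is uniformly bounded away from zero with the desired sign, and then to exploit the expansion of $E_m$ to produce a positive-measure set of driving sequences whose orbits remain in this interval for many iterates. Focusing on the decrease case, since $\xi$ is smooth, not identically zero, and has zero mean, it must assume negative values somewhere; fix $y^- \in \I$ with $\xi(y^-) < 0$ and choose $\eta > 0$ together with an open interval $U \ni y^-$ such that $\xi(y) \le -\eta$ for all $y \in U$. Then I take $m$ large enough that some element $P_j = [(j-1)/m, j/m)$ of the Markov partition for $E_m$ is contained in $U$; this is possible since the size of $U$ depends only on $\xi$.

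Given such an $m$, I pick $r_0$ small enough that $C r_0 < \eta/2$, where $C$ is the constant appearing in the uniform bound $|R(y,x)| \le C r_0$ from \eqref{e:gy}. Then for every $y \in P_j$ and every $x \in \mathbb{R}$,
\[
g_y(x) - x \;=\; \xi(y) + R(y,x) \;\le\; -\eta + C r_0 \;<\; -\tfrac{\eta}{2}.
\]
Let $A_n = \{ y_0 \in \I \; ; \; E_m^i(y_0) \in P_j \text{ for all } 0 \le i \le n-1\}$. Because each cell of the partition is mapped bijectively onto $\I$ by $E_m$, a routine Markov computation yields $\mathrm{Leb}(A_n) = m^{-n} > 0$.

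Fix $n = \lceil 2L/\eta \rceil$. For any $\omega \in A_n$ and any $x \in \mathbb{R}$, consecutive fiber iterates satisfy $f^{i+1}_\omega(x) - f^i_\omega(x) < -\eta/2$, so in particular $f^i_\omega(x) < x$ for all $0 < i \le n$, while $f^n_\omega(x) < x - n\eta/2 \le x - L$. The variant yielding $f^n_\omega(x) > x + L$ with $f^i_\omega(x) > x$ is obtained identically, starting from a point $y^+ \in \I$ where $\xi(y^+) > 0$. The only point requiring attention, in contrast with the monotone case of Lemma~\ref{l:left}, is that $m$ must be chosen large enough to fit a Markov cell inside the neighborhood $U$ of definite sign; this dependence of $m$ on $\xi$ is harmless since Theorem~\ref{t:birkhoffT} allows $m$ to be taken as large as needed.
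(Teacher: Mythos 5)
Your argument is correct and essentially the same as the paper's. The paper locates a fixed point of $E_m$ inside an interval where $\xi < 0$ and relies on continuity to keep orbits of nearby points in that interval for $n$ steps; you instead fit a whole Markov cell $P_j$ inside that interval and explicitly compute the measure $m^{-n}$ of the set of $y$ whose first $n$ iterates stay in $P_j$, which is a marginally more quantitative formulation of the same idea.
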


\begin{proof}
Since $\xi$ is not identically zero and $\int_{\mathbb{T}} \xi (y) \, dy = 0$, we have two open intervals $A$ and $B$ in the circle such that 
$\xi_{|A} < 0$ and $\xi_{|B} > 0$. 
We know that $E_m$ has a unique fixed point in each interval of the form $I_{m, k} = [\frac{k}{m}, \frac{k+1}{m}], k=0, \dots, m-1$. 
It follows that, for $m$ large enough, $A$ and $B$ each contain a fixed point of $E_m$. 
So let us consider such $m$ and denote the fixed point of $E_m$ in $A$ by $y_A$ and the one in $B$ by $y_B$. 
We have $\xi(y_A) < 0 < \xi(y_B)$. 

For each $n \in \mathbb{N}$ we have that for $y$ close enough to $y_A$,   
\begin{align*}
    x_{n+1} &= x_0 + \xi(y) + \xi(E_m y) + \dots + \xi(E^{n-1}_m y) + \xi(E^n_m y)
\end{align*}
with $\xi(E^j_m y)<0$ for each $0\le j \le n$.
So
\[
x_{n+1} < x_{n}< \dots < x_{0}.
\]
A similar statement applies to points near $y_B$.
Taking $n$ large enough, and thus taking smaller neighborhood of the fixed points, gives the lemma for the cocycle $x_{n+1} = x_n + \xi_n$.

Now if we consider the perturbed skew product then the result still follows for $r_0$ small enough so that $\xi^{\max}_{y, n} + Cr_0 < 0$ and $\xi^{\min}_{y, n} - Cr_0 >0$, where
\begin{align*}
    \xi^{\max}_{y, n} = \max_{0\le i\le n} \xi(E^i_m y)
\end{align*}
and
\begin{align*}
    \xi^{\min}_{y, n} = \min_{0\le i\le n} \xi(E^i_m y).
\end{align*}
\end{proof}

Lemma~\ref{l:boundV-} gets replaced by the following lemma.
We keep the notation from the previous section, except that the doubling map is replaced by $E_m$.
The adjacency matrix $\mathcal{A}_N = (a_{ij})_{i,j=1}^K$ with $K = m^N$ is now given by
$a_{ij} = 1$ precisely if $j = mi + k$ modulo $K$, for $-m < k \le 0$.
The stochastic matrix $\Pi_N$ equals $\frac{1}{m} \mathcal{A}_N$.
The following lemma yields lower bounds for $\zeta_N$. Its proof is an adapted version of the 
proof of Lemma~\ref{l:boundV-}.

\begin{lemma}\label{l:boundDViter}
There exists $m>0$ so that the following holds.
There exists $V^->0$ so that for $N$ larger than some $N_0$, 
\[
V^- \le \sum_{j=1}^K\pi_{ij}\zeta_N ({i,j})^2, 
\]
\end{lemma}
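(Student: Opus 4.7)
My strategy follows Lemma~\ref{l:boundV-} closely, replacing the monotonicity argument---which directly produced a positive sign for the relevant differences---with a quantitative variance argument that succeeds once $m$ is large enough. First I would rewrite the target quantity using the Poisson identity: as in Lemma~\ref{l:boundDV} the relation $\Pi_N^N\xi_N=0$ gives $\Delta_N = -\sum_{M=1}^{N-1}\Pi_N^M\xi_N$, so $\Psi_N := \xi_N - \Delta_N = \sum_{M=0}^{N-1}\Pi_N^M\xi_N$. Since $\zeta_N(i,j)=\Psi_N(j)+\Delta_N(i)$ and the Poisson equation forces $\sum_j\pi_{ij}\zeta_N(i,j)=0$,
\[
\sum_j\pi_{ij}\zeta_N(i,j)^2 = \operatorname{Var}_{j\sim\pi_{i\cdot}}\Psi_N(j),
\]
the variance of $\Psi_N$ over the $m$ children of $i$. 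The task thus reduces to bounding this variance from below uniformly in $i$ and in $N\ge N_0$.

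Next I would analyze the summands $\phi_M := \Pi_N^M\xi_N$ level by level. Each $\phi_M(j)$ averages $\xi_N$ over the $m^M$ cells forming $E_m^M(P_j)$, an interval of length $m^{M-N}$ whose location depends on the last $N-M$ $m$-adic digits of $j$'s label. As $j$ ranges over the $m$ children of a fixed $i$, only the last digit varies, so these $m$ intervals tile a region of length $m^{M-N+1}$. Comparing the discrete average $\phi_M(j)$ with $|E_m^M(P_j)|^{-1}\int_{E_m^M(P_j)}\xi$ (error $O(m^{-N})$) and using Lipschitzness of $\xi$ gives
\[
\sqrt{\operatorname{Var}_j\phi_M}\le C\,m^{M-N+1}\qquad(M\le N-2),
\]
and these bounds sum to a geometric series in $m^{-1}$. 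For $M=N-1$ the picture inverts: $E_m^{N-1}(P_j)=[(k-1)/m,k/m]$ where $k$ is the last digit of $j$, so $\phi_{N-1}(j)$ hits each $a_k:=m\int_{(k-1)/m}^{k/m}\xi$ exactly once; using $\tfrac1m\sum_k a_k=0$ and a Riemann-sum argument,
\[
\operatorname{Var}_j\phi_{N-1} = \tfrac{1}{m}\sum_k a_k^2 + O(m^{-N}) \xrightarrow{m\to\infty} \int_0^1\xi(y)^2\,dy>0,
\]
the positivity coming from $\xi\not\equiv 0$. For $m\ge m_0$ this variance is at least some $c_0>0$, uniformly in $i$ and $N$.

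Combining the two estimates by the $L^2$ triangle inequality for the seminorm $\sqrt{\operatorname{Var}(\cdot)}$ gives
\[
\sqrt{\operatorname{Var}\Psi_N} \ge \sqrt{\operatorname{Var}\phi_{N-1}} - \sum_{M=0}^{N-2}\sqrt{\operatorname{Var}\phi_M} \ge \sqrt{c_0} - \frac{C}{m-1},
\]
and enlarging $m$ further if necessary produces a uniform positive lower bound $V^-$. The main obstacle will be the lower bound on $\operatorname{Var}_j\phi_{N-1}$: for non-monotone $\xi$ no sign argument is available, and one must instead extract positivity from $\|\xi\|_{L^2}^2$. The hypothesis that $m$ is large plays two roles simultaneously---it makes the Riemann sum $\tfrac{1}{m}\sum_k a_k^2$ a good approximation of $\int_0^1\xi^2$, and it makes the lower-order tail $\sum_{M<N-1}\sqrt{\operatorname{Var}\phi_M}$ a small geometric series---so one must choose $m$ large enough that both effects simultaneously exceed the threshold set by $c_0$.
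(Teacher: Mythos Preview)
Your proof is correct and follows essentially the same strategy as the paper: decompose $\Psi_N=\xi_N-\Delta_N$ into the levels $\phi_M=\Pi_N^M\xi_N$, show that the top level $M=N-1$ has macroscopic variation over the $m$ children of $i$ (the paper uses that $\xi$ changes sign to get a large diameter, you use $\|\xi\|_{L^2}>0$ to get a large variance) while the lower levels $M\le N-2$ have variation $O(m^{M-N+1})$ by Lipschitzness of $\xi$, and then combine via the triangle inequality for large $m$. The only cosmetic difference is that the paper measures variation through the spread $\max_{j_1,j_2\in I_i}|\,\cdot(j_1)-\cdot(j_2)|$ and infers the bound on $\tfrac{1}{m}\sum_{j}\zeta_N(i,j)^2$ from a lower bound on that spread, whereas you work directly with the standard-deviation seminorm; the underlying estimates and the role of ``$m$ large'' are identical.
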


\begin{proof}
All the indices in the proof are taken modulo $m^N$. Note that
\[
 \Pi_N = \left( 
 \begin{array}{cccc} 
 \frac{1}{m} \cdots \frac{1}{m} & 0 \cdots 0 & \cdots & 0 \cdots 0 \\
 0 \cdots 0 & \frac{1}{m} \cdots \frac{1}{m} &  0 \cdots 0 & \cdots \\
 \vdots & \ddots & \ddots & \vdots \\
 0 \cdots 0 & \cdots & 0 \cdots 0 & \frac{1}{m} \cdots \frac{1}{m} \\
 \vdots & \vdots & \vdots & \vdots \\
\frac{1}{m} \cdots \frac{1}{m} & 0 \cdots 0 & \cdots & 0 \cdots 0 \\
 0 \cdots 0 & \frac{1}{m} \cdots \frac{1}{m} &  0 \cdots 0 & \cdots \\
 \vdots & \ddots & \ddots & \vdots \\
 0 \cdots 0 & \cdots & 0 \cdots 0 & \frac{1}{m} \cdots \frac{1}{m} \\
 \end{array} 
 \right)
\]
where $ \frac{1}{m} \cdots \frac{1}{m}$ and $0 \cdots 0$ stand for sequences of $m$ identical numbers. 
For fixed $i\in\{1, \dots, m^N\}$ let $I_i =\{m(i-1)+1,\dots, mi\}$, and note
\begin{align*}
    \sum_{j=1}^K \pi_{ij} \zeta_N ({i,j})^2 = \frac{1}{m} \sum_{j\in I_i}\zeta_N ({i,j})^2.
\end{align*}
We will show that 
\begin{align*}
    \max_{j_1,j_2 \in I_i} |\zeta_N(i, j_1) - \zeta_N(i, j_2)| > c
\end{align*}
for some $c>0$, which implies that
\begin{align*}
    \frac{1}{m} \sum_{j\in I_i}\zeta_N ({i,j})^2 > C
\end{align*}
for some $C>0$ (depending on $m$).

We have
\begin{align*}
    |\zeta_N(i, j_1) - \zeta_N(i, j_2)| &= |\xi_N(j_1) - \Delta_N(j_1) - \xi_N(j_2) + \Delta_N(j_2)|\\
    &\ge  
      |\Delta_N(j_1)-\Delta_N(j_2)| - |\xi_N(j_1) - \xi_N(j_2)|.
\end{align*}
As $\xi$ is assumed to be smooth, we have $|\xi'| \le C_2$ for some $C_2 > 0$.
Now each $\xi_N(j)$ is a representative of $\{\xi(x)\}_{x\in P_j}$ where $|P_j| = 1/m^N$. 
Thus for each $i$, the different values of $\xi_N(j), j\in I_i$, are in $m$ consecutive partition elements 
whose union is an interval of length $1/m^{N-1}$. Hence
\begin{align*}
    |\xi_N(j_1) - \xi_N(j_2)| \le \frac{C_2}{m^{N-1}}.
\end{align*}
It follows that
\begin{align}\label{e:difference of zeta's}
  \max_{j_1,j_2 \in I_i}  |\zeta_N(i, j_1) - \zeta_N(i, j_2)|\ge \max_{j_1, j_2\in I_i}|\Delta_N(j_1)-\Delta_N(j_2)| - \frac{C_2}{m^{N-1}}.
\end{align}
Since $\Pi_N^N\xi_N = 0$, we have
\begin{align*}
    \Delta_N = \Pi_N \xi_N + \dots + \Pi_N^{N-1} \xi_N.
\end{align*}
For $k\in \{1, \dots, N-1\}$, let $B_k = \Pi_N^k \xi_N$.
We have
\begin{align*}
    |\Delta_N(j_1) - \Delta_N(j_2)|\ge |B_{N-1}(j_1) - B_{N-1}(j_2)| - \sum_{k=1}^{N-2}|B_k(j_1) - B_k(j_2)|.
\end{align*}
As $\xi \ne 0$ and $\int_{\mathbb{T}} \xi(y) dy = 0$, for $m$ large enough, there exists $C_1>0$ so that
\begin{align*}
    C_1 <\Big \{ - \min_{j\in I_i} B_{N-1}(j), \max_{j\in I_i} B_{N-1}(j)\Big\}.
\end{align*}
Thus
\begin{align}\label{e:difference of Delta's}
    \max_{j_1, j_2\in I_i}|\Delta_N(j_1) - \Delta_N(j_2)|\ge 2 C_1 - \max_{j_1, j_2\in I_i}\sum_{k=1}^{N-2}|B_k(j_1) - B_k(j_2)|.
\end{align}
For each $k$, we have
\begin{align*}
    B_k(j) = \frac{1}{m^k}\sum_{l=m^k(j-1)+1}^{m^k j}\xi_N(l).
\end{align*}
Thus
\begin{align*}
    |B_k(j_1) - B_k(j_2)|&=\frac{1}{m^k}\bigg|\sum_{i=m^k(j_1-1)+1}^{m^k j_1}\xi_N(i) - \sum_{i=m^k(j_2-1)+1}^{m^k j_2}\xi_N(i) \bigg|\\
    &=\frac{1}{m^k}\bigg|\sum_{j=1}^{m^k}\Big(\xi_N(m^k(j_1-1)+j) -\xi_N((m^k(j_2-1)+j)\Big) \bigg|\\
    &\le \frac{1}{m^k}\sum_{j=1}^{m^k}\Big|\xi_N(m^k(j_1-1)+j) -\xi_N((m^k(j_2-1)+j)\Big|.
\end{align*}
The different values of $\xi_N(m^k(j_1-1)+j)$ and $\xi_N(m^k(j_2-1)+j)$ are within a block of length $m^{k+1}$, thus \begin{align*}
    |B_k(j_1) - B_k(j_2)|\le \frac{C_2}{m^{N-k-1}}.
\end{align*}
Hence
\begin{align}\label{e:difference of B's}
    \max_{j_1, j_2\in I_i}\sum_{k=1}^{N-2}|B_k(j_1) - B_k(j_2)|\le \frac{C_2}{m^{N-1}}\sum_{k=1}^{N-2}m^k\le \frac{C_2}{m-1}.
\end{align}

Combining \eqref{e:difference of zeta's}, \eqref{e:difference of Delta's} and \eqref{e:difference of B's} gives
\begin{align*}
    \max_{j_1,j_2 \in I_i} |\zeta_N(i,j_1) - \zeta_N(i,j_2)|\ge 2C_1 - \frac{C_2}{m-1} - \frac{C_2}{m^{N-1}}.
\end{align*}
Therefore for $m$ and $N$ large enough, $\max_{j_1,j_2 \in I_i} |\zeta_N(i,j_1) - \zeta_N(i,j_2)|>c>0$. 
This ends the proof.
\end{proof}

\begin{proof}[Proof of Theorem~\ref{t:birkhoffT}]
Choose $m$ such that Lemma~\ref{l:boundDViter} applies.
Now the arguments to prove Theorem~\ref{t:birkhoff00} apply to conclude Theorem~\ref{t:birkhoffT}. 
\end{proof}


\begin{thebibliography}{10}



\bibitem{aarkea82}
\newblock J. Aaronson, M. Keane,
\newblock The visits to zero of some deterministic random walks,
\newblock {\em Proc. London Math. Soc.} \textbf{44} (1982), 535--553. 






%
%
%



\bibitem{ashastnic98}
\newblock P. Ashwin, P. Aston, M. Nicol,
 \newblock On the unfolding of a blowout bifurcation,   
 \newblock \emph{Physica D} \textbf{111} (1998), 81--95. 

\bibitem{athdai00}
K.B. Athreya, J. Dai,
\newblock Random logistic maps I,
\newblock {\em Journal of Theoretical Probability} \textbf{13} (2000), 595--608.



\bibitem{athsch03}
K.B. Athreya, H.J. Schuh,
\newblock {Random logistic maps II. The critical case,}
\newblock {\em Journal of Theoretical Probability} \textbf{16} (2003),  813--830.



\bibitem{atk76}
\newblock G. Atkinson,
\newblock Recurrence of co-cycles and random walks,
\newblock {\em J. London Math. Soc.} \textbf{13} (1976), 486--488. 

\bibitem{avidoldursar15}
\newblock A. Avila, D.  Dolgopyat, E. Duryev, O. Sarig,
\newblock The visits to zero of a random walk driven by an irrational rotation,
\newblock {\em Israel J. Math.} \textbf{207} (2015), 653--717. 


\bibitem{bonmil08}
\newblock A. Bonifant, J. Milnor,
\newblock Schwarzian derivatives and cylinder maps,
\newblock \emph{Fields Institute Communications} \textbf{53} (2008), 1--21.


\bibitem{boygor97}
\newblock A. Boyarsky, P. G\'{o}ra,
\newblock {\em Laws of chaos. Invariant measures and dynamical systems in one dimension},
\newblock Birkh\"{a}user, 1997.

\bibitem{fiemeltor05}
\newblock M. Field, I.  Melbourne,  A. T\"{o}r\"{o}k,
\newblock Stable ergodicity for smooth compact Lie group extensions of hyperbolic basic sets,
\newblock {\em Ergodic Theory Dynam. Systems} \textbf{25} (2005),  517--551. 


\bibitem{fuhzha00}
\newblock C.-D. Fuh, C.-H. Zhang,
\newblock Poisson equation, moment inequalities and quick convergence for Markov random walks,
\newblock {\em Stochastic Processes Appl.} \textbf{87} (2000), 53--67. 



\bibitem{ghahom17}
\newblock M. Gharaei, A.J. Homburg, 
\newblock Random interval diffeomorphisms, 
\newblock {\em Discrete Contin. Dyn. Syst. Ser. S} \textbf{10} (2017), 241--272.

\bibitem{gou07}
\newblock S. Gou\"{e}zel,
\newblock Statistical properties of a skew product with a curve of neutral points,
\newblock {\em Ergodic Theory Dynam. Systems} \textbf{27} (2007), 123--151. 

\bibitem{gui89}
\newblock Y. Guivarch, 
\newblock Propri\'et\'es ergodiques, en mesure infinie, de certains systemes dynamiques fibr\'es,
\newblock \emph{Ergodic Theory Dynam. Systems} \textbf{9} (1989), 433--453.

\bibitem{heaplaham94} 
\newblock J.F. Heagy, N. Platt, S.M. Hammel,
\newblock Characterization of on-off intermittency,
\newblock \emph{Phys. Rev. E.} \textbf{49} (1994), 1140--1150.



\bibitem{krasza84}
\newblock A. Kr\'{a}mli, D. Sz\'{a}sz, 
\newblock Random walks with internal degrees of freedom. II: First-hitting probabilities,
\newblock \emph{Z. Wahrscheinlichkeitstheor. Verw. Geb.} \textbf{68} (1984),  53--64. 

\bibitem{levperwil09}
\newblock D.A. Levin, Y. Peres, E.L. Wilmer,
\newblock \emph{Markov chains and mixing times},
\newblock Amer. Math. Soc., 2009.

%


\bibitem{meytwe93}
\newblock S.P. Meyn, R.L. Tweedie,
\newblock {\em Markov chains and stochastic stability},
\newblock Springer Verlag, 1993.


\bibitem{morsma14}
\newblock C.G. Moreira, D. Smania,
\newblock Metric stability for random walks (with applications in renormalization theory),
\newblock in: \emph{Frontiers in Complex Dynamics: In Celebration of John Milnor’s 80th Birthday}, 
Princeton University Press, 2014.


\bibitem{nit00}
\newblock V. Ni\c{t}ic\u{a},
\newblock Examples of topologically transitive skew products,
\newblock \emph{Discrete Cont. Dynam. Sys.} \textbf{6} (2000), 351--360.

 \bibitem{nitpol05}  
\newblock V. Ni\c{t}ic\u{a}, M. Pollicott,
\newblock Transitivity of Euclidean extensions of Anosov diffeomorphisms,
\newblock \emph{Ergodic Theory Dynam. Systems} \textbf{25} (2005), 257--269.


\bibitem{ottsom94}
\newblock E. Ott, J.C. Sommerer,
\newblock Blowout bifurcations: the occurrence of riddled basins and on-off intermittency,
\newblock \emph{Physics Letters A}, \textbf{188} (1994), 39--47.



\bibitem{plaspitre93}
\newblock N. Platt, E.A. Spiegel, C. Tresser,
\newblock On-off intermittency: A mechanism for bursting,
\newblock \emph{Phys. Rev. Lett.}, \textbf{70} (1993), 279--282.





\bibitem{sch77}
\newblock K. Schmidt,
\newblock {\em Cocycles of ergodic transformation groups},
 \newblock Lecture Notes in Mathematics, Vol. 1, MacMillan (India) 1977.



\bibitem{shi84}
\newblock A.N. Shiryayev,
\newblock {\em Probability},
\newblock  Springer Verlag, 1984. 




\end{thebibliography}
\end{document}